\providecommand{\U}[1]{\protect\rule{.1in}{.1in}}
\theoremstyle{plain}
\newtheorem{claim}{Claim}
\newtheorem{corollary}{Corollary}
\newtheorem{definition}{Definition}
\newtheorem{example}{Example}
\newtheorem{lemma}{Lemma}
\newtheorem{proposition}{Proposition}
\newtheorem{remark}{Remark}
\newtheorem{theorem}{Theorem}
\numberwithin{equation}{section}
\begin{document}
\title[Bilipschitz Embedding of Homogeneous Fractals]{Bilipschitz Embedding of Homogeneous Fractals}
\author{Fan L\"u}
\address{School of Mathematics and Statistics, Huazhong University of Science and
Technology, 430074, Wuhan, P. R. China}
\email{lvfan1123@163.com}
\author{Man-Li Lou}
\address{Department of Mathematics, Guangdong Polytechnic Normal University, Guangzhou,
510665, P. R. China}
\email{loumanli@126.com}
\author{Zhi-Ying Wen}
\address{Department of Mathematics, Tsinghua University, 100084, Beijing, P. R. China}
\email{wenzy@tsinghua.edu.cn}
\author{Li-Feng Xi}
\address{Institute of Mathematics, Zhejiang Wanli University, 315100, Ningbo, P. R. China}
\email{xilifengningbo@yahoo.com}
\thanks{Man-Li Lou is corresponding author. The work is supported by NSFC (Nos.
11371329, 11271223, 11071224, 11101159), NCET, NSF of Zhejiang Province (Nos.
LR13A1010001, LY12F02011) and NSF of Guangdong Province (No. S2011040005741).}
\subjclass[2000]{28A80}
\keywords{fractal, bilipschitz embedding, Ahlfors--David regular set, Moran set}

\begin{abstract}
In this paper, we introduce a class of fractals named homogeneous sets based
on some measure versions of homogeneity, uniform perfectness and doubling.
This fractal class includes all Ahlfors--David regular sets, but most of them
are irregular in the sense that they may have different Hausdorff dimensions
and packing dimensions. Using Moran sets as main tool, we study the
dimensions, bilipschitz embedding and quasi-Lipschitz equivalence of
homogeneous fractals.

\end{abstract}
\maketitle

\section{Introduction}

It is well known that self-similar sets and self-conformal sets satisfying the
open set condition (\textbf{OSC}) are always \emph{Ahlfors--David}
\emph{regular }\cite{Hutchinson, Mattila}. We say that a compact subset $A$ of
metric space $(X,$d$)$ is Ahlfors--David $s$-regular with $s\in(0,\infty),$ if
there is a Borel measure $\mu$ supported on $A$ and a constant $c\geq1$ such
that for all $x\in A$ and $0<r\leq|A|,$
\begin{equation}
c^{-1}r^{s}\leq\mu(B(x,r))\leq cr^{s},\text{ } \label{A-D}%
\end{equation}
where $B(x,r)$ is the closed ball centered at $x$ with radius $r$ and
$|\cdot|$ denotes the diameter of a set. For an Ahlfors--David $s$-regular set
$A,$ $0<\mathcal{H}^{s}(A)<\infty$ and $\dim_{H}A=\dim_{P}A=s$, i.e., its
Hausdorff dimension and packing dimension are the same.

Ahlfors--David regularity is a weak notion of \emph{homogeneity} \cite{David
Semmes}. We give another \emph{measure version} of homogeneity, i.e., there is
a constant $\lambda\geq1$ such that for all $x_{1},x_{2}\in A$ and
$0<r\leq|A|,$
\begin{equation}
\lambda^{-1}\leq\frac{\mu(B(x_{1},r))}{\mu(B(x_{2},r))}\leq\lambda.
\label{self}%
\end{equation}
Naturally, (\ref{self}) holds for all Ahlfors--David regular sets.

We also need two other notions, \emph{uniform perfectness} and \emph{doubling}%
, which play important roles in the research of metric spaces. For example,
Proposition $15.11$ of \cite{David Semmes} shows that if a compact metric
space is uniformly perfect, doubling and uniformly disconnected, then it is
quasisymmetrically equivalent to a symbolic system $\Sigma_{2}$.

We notice that any Ahlfors--David regular set $A$ is \emph{uniformly perfect}
(see, e.g., \cite{BP} and \cite{MR}), i.e., it contains more than one point
and there exists a constant $t\in(0,1)$ such that $[B(x,r)\backslash
B(x,tr)]\cap A\neq\varnothing$ for all $x\in A,$ $0<r\leq|A|.$ For the
\emph{measure version} of uniform perfectness, we obtain an alternative
condition: there exists a constant $\kappa_{1}<1$ such that
\begin{equation}
\inf_{x\in A,\;r\leq|A|}\frac{\mu(B(x,r))}{\mu(B(x,\kappa_{1}r))}>1. \label{*}%
\end{equation}
It follows from (\ref{A-D}) that any Ahlfors--David regular set satisfies
(\ref{*}).

In a metric space, the notion of \emph{doubling} describes that any closed
ball of radius $r$ can be covered by no more than $M$ balls of radius $r/2,$
where $M$ is a constant. The notion of doubling also has \emph{measure
version, }see e.g. \cite{LS} and \cite{Wu}\emph{. }For compact subsets in
metric space, these two versions are equivalent. It follows from (\ref{A-D})
that any Ahlfors--David regular measure is doubling, i.e., there exists a
constant $T\geq1$ such that $\mu(B(x,r))\leq T\mu(B(x,r/2))$ for all $x\in A,$
$0<r\leq|A|,$ i.e., for $\kappa_{2}=1/2,$
\begin{equation}
\sup_{x\in A,\;r\leq|A|}\frac{\mu(B(x,r))}{\mu(B(x,\kappa_{2}r))}<\infty.
\label{**}%
\end{equation}

Simulating the homogeneity, uniform perfectness and doubling by (\ref{self}),
(\ref{*}) and (\ref{**}), we can define a large class of fractals, which are
not so good as Ahlfors--David regular sets but \emph{homogeneous} in certain sense.

\begin{definition}
\label{D:Main}A compact subset $A$ of metric space $(X,$\emph{d}$)$ is said to
be \textbf{homogeneous}\emph{, }if $|A|>0$ and there is a Borel probability
measure $\mu$ supported on $A$ satisfying:

\begin{enumerate}
\item There is a constant $\lambda_{A}\geq1,$ such that for all $x_{1}%
,x_{2}\in A$ and $0<r\leq|A|,$
\begin{equation}
\lambda_{A}^{-1}\leq\frac{\mu(B(x_{1},r))}{\mu(B(x_{2},r))}\leq\lambda_{A};
\label{cc}%
\end{equation}

\item There are constants$\ \kappa_{A}\in(0,1)$ and $1<\delta_{A}\leq
\Delta_{A}<\infty,$ such that for all $x\in A$ and $0<r\leq|A|,$
\begin{equation}
\delta_{A}\leq\frac{\mu(B(x,r))}{\mu(B(x,\kappa_{A}r))}\leq\Delta_{A}.\text{ }
\label{***}%
\end{equation}

\end{enumerate}
\end{definition}

\begin{figure}[ptbh]
\centering\includegraphics[width=90mm]{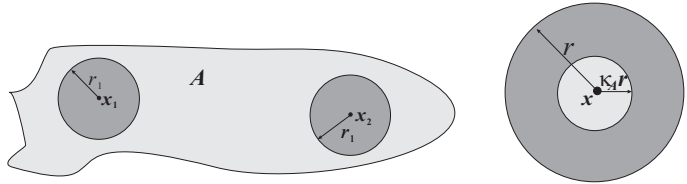}\caption{Compare the measures of
different balls}
\end{figure}

\begin{remark}
All Ahlfors--David regular sets are homogeneous. But homogeneous sets may be
not Ahlfors--David regular, see Proposition $\ref{P;homo not regular}$ and
Example $\ref{EX:k+1/k+2 copy(1)}$ in Section $3.$
\end{remark}

\begin{remark}
Any Moran set is homogeneous, see Proposition $\ref{P:Mor is Homo}$ in
Subsection $1.2.$
\end{remark}

\begin{remark}
If there exists a point $x$\ in $A$ such that $\delta\leq\mu(B(x,r))/\mu
(B(x,\kappa r))\leq\Delta$ holds for all $0<r\leq|A|$ with constants
$\kappa\in(0,1)$ and $1<\delta\leq\Delta<\infty,$ then it follows from
$(\ref{cc})\ $that $(\ref{***})$ holds for any point in $A$ $($with some
constants $\kappa_{A},\delta_{A}$ and $\Delta_{A}).$
\end{remark}

\begin{remark}
By $(\ref{***})$, there are no atoms in $A$.
\end{remark}

There are some fundamental questions about homogeneous fractals:

\begin{itemize}
\item How about the \textbf{dimensions} of homogeneous fractals? Can we find
\textbf{a large class of homogeneous fractals} which are not Ahlfors--David regular?

\item How about the \textbf{bilipschitz embedding} between homogeneous
fractals? Which kind of good fractals can be bilipschitz embedded into them?

\item Given two homogeneous fractals, when are they bilipschitz equivalent? An
alternative but weaker question is of \textbf{quasi-Lipschitz equivalence}.
\end{itemize}

To answer the above questions, we define a function $\alpha_{A}(x,r)$ for a
homogeneous set $A$ as follows:
\begin{equation}
\alpha_{A}(x,r)=\log\mu(B(x,r))/\log r\qquad\text{ for } x\in A,\ 0<r\leq|A|.
\label{E:equ0}%
\end{equation}
Here $\alpha_{A}(x,r)$ is similar to the function with respect to pointwise dimension.

For any function $g(r)$ defined on $(0,\delta)$ with $\delta>0,$ we focus on
the behavior of the function $g(r)$ when $r\rightarrow0.$ In fact, for any
function $h(r)$ with
\begin{equation}
\left\vert h(r)-g(r)\right\vert =O(|\log r|^{-1}), \label{E:equ}%
\end{equation}
we denote $g\sim h$ and define an equivalence class $[g]=\{h:g\sim h\}.$ Then,
as $\alpha_{A}(x_{1},r)\sim\alpha_{A}(x_{2},r)$ by (\ref{cc}), we use
$\alpha_{A}(r)$ to denote any one function in the equivalence class
$[\alpha_{A}(x,r)]$ with $x\in A.$ For example, we can take
\begin{equation}
\alpha_{A}(r)\equiv s\text{ for an Ahlfors--David }s\text{-regular set }A.
\label{regular}%
\end{equation}

With the help of the function $\alpha_{A}(r)$ defined above, we can answer the
above questions on dimensions, bilipschitz equivalence and quasi-Lipschitz equivalence.

\subsection{Dimensions}

\begin{proposition}
\label{P:Prop of Homo}For a homogeneous set $A$, we have:

\begin{enumerate}
\item $0<\liminf_{r\rightarrow0}\alpha_{A}(r)\leq\limsup_{r\rightarrow0}%
\alpha_{A}(r)<\infty$ and
\[
\qquad\dim_{H}A=\underline{\dim}_{B}A=\liminf_{r\rightarrow0}\alpha
_{A}(r),\text{ }\dim_{P}A=\overline{\dim}_{B}A=\limsup_{r\rightarrow0}%
\alpha_{A}(r),
\]
where $\dim_{P}A$ denotes the radius packing dimension of metric space $A$
defined in \cite{Cutler}, which coincides with the usual definition when $A$
is a subset of a Euclidean space.

\item Suppose $N(A,r)$ is the smallest number of balls with radius $r$ needed
to cover $A.$ Let $f_{A}(r)=\frac{\log N(A,r)}{-\log r}.$ Then
\begin{equation}
f_{A}(r)\sim\alpha_{A}(x,r)\text{ for any }x\in A. \label{behavior}%
\end{equation}

\end{enumerate}
\end{proposition}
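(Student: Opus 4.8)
The plan is to extract from the two defining inequalities (\ref{cc}) and (\ref{***}) quantitative control on how $\mu(B(x,r))$ decays in $r$, then transfer this to covering numbers and finally to the various dimensions. First I would observe that iterating (\ref{***}) gives, for every integer $n\ge 0$ and every $x\in A$,
\[
\delta_A^{\,n}\le \frac{\mu(B(x,|A|))}{\mu(B(x,\kappa_A^{\,n}|A|))}\le \Delta_A^{\,n},
\]
so since $\mu$ is a probability measure supported on $A$ we get $\mu(B(x,\kappa_A^{\,n}|A|))$ sandwiched between $\Delta_A^{-n}$ and $\delta_A^{-n}$ (up to the constant $\mu(B(x,|A|))$, which lies in $[\lambda_A^{-1},1]$ by (\ref{cc}) applied with one ball equal to $A$). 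For a general radius $0<r\le|A|$, pick $n$ with $\kappa_A^{\,n+1}|A|<r\le\kappa_A^{\,n}|A|$; monotonicity of $\mu(B(x,\cdot))$ then traps $\mu(B(x,r))$ between two consecutive such bounds. Taking logarithms and dividing by $\log r$ (noting $n\asymp |\log r|$) yields
\[
\frac{\log\delta_A}{-\log\kappa_A}+O(|\log r|^{-1})\ \le\ \alpha_A(x,r)\ \le\ \frac{\log\Delta_A}{-\log\kappa_A}+O(|\log r|^{-1}),
\]
which simultaneously proves the first assertion of part (1), namely $0<\liminf_{r\to0}\alpha_A(r)\le\limsup_{r\to0}\alpha_A(r)<\infty$, and shows the $O(|\log r|^{-1})$ error terms are uniform in $x$ (consistent with the already-noted $\alpha_A(x_1,r)\sim\alpha_A(x_2,r)$).

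Next I would prove part (2), since $f_A(r)\sim\alpha_A(x,r)$ is the engine for the box-dimension identities in part (1). Let $\{B(x_i,r)\}_{i=1}^{N}$ be a minimal $r$-cover of $A$ with $N=N(A,r)$; we may assume the $x_i\in A$. Since these balls cover $A$ and $\mu(A)=1$, subadditivity gives $1\le\sum_i\mu(B(x_i,r))\le N\cdot\sup_{x\in A}\mu(B(x,r))$, hence $N\ge 1/\sup_x\mu(B(x,r))$. For the reverse direction, take a maximal $r$-separated subset $\{y_j\}_{j=1}^{M}$ of $A$; then the balls $B(y_j,r)$ cover $A$ (maximality), so $N(A,r)\le M$, while the balls $B(y_j,r/2)$ are pairwise disjoint, giving $M\cdot\inf_{x\in A}\mu(B(x,r/2))\le\sum_j\mu(B(y_j,r/2))\le 1$, hence $N\le 1/\inf_x\mu(B(x,r/2))$. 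Now (\ref{cc}) makes $\sup_x\mu(B(x,r))$ and $\inf_x\mu(B(x,r))$ comparable up to the factor $\lambda_A$, and (\ref{***}) makes $\mu(B(x,r/2))$ comparable to $\mu(B(x,r))$ up to a bounded power of $\Delta_A$ depending only on $\kappa_A$ (choose the least $k$ with $\kappa_A^{\,k}\le 1/2$ and iterate). Combining, $-\log N(A,r)=\log\mu(B(x,r))+O(1)$ uniformly in $x$, i.e. $f_A(r)=\frac{\log\mu(B(x,r))}{-\log r}+O(|\log r|^{-1})=\alpha_A(x,r)+O(|\log r|^{-1})$, which is exactly (\ref{behavior}).

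Finally I would assemble part (1). The box dimensions are by definition $\underline{\dim}_BA=\liminf_{r\to0}f_A(r)$ and $\overline{\dim}_BA=\limsup_{r\to0}f_A(r)$, so (\ref{behavior}) gives $\underline{\dim}_BA=\liminf_{r\to0}\alpha_A(r)$ and $\overline{\dim}_BA=\limsup_{r\to0}\alpha_A(r)$ immediately, since $\sim$-equivalent functions have the same liminf and limsup. For the Hausdorff dimension I would use the mass distribution principle: if $\alpha_A(x,r)\ge s-\varepsilon$ for all small $r$ and all $x$, then $\mu(B(x,r))\le r^{\,s-\varepsilon}$ for small $r$, giving $\dim_HA\ge s-\varepsilon$; letting $\varepsilon\to0$ with $s=\liminf_{r\to0}\alpha_A(r)$ yields $\dim_HA\ge\liminf\alpha_A(r)=\underline{\dim}_BA$, and the reverse inequality $\dim_HA\le\underline{\dim}_BA$ is standard. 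For the packing dimension, I would invoke the characterization (for the radius packing dimension of \cite{Cutler}) as the modified upper box dimension, $\dim_PA=\inf\{\sup_i\overline{\dim}_BA_i: A=\bigcup_iA_i\}$; but because the estimates above are uniform over $x\in A$ and the defining inequalities (\ref{cc}), (\ref{***}) are inherited by the measure restricted and renormalized on any ball $B(x,\rho)\cap A$ with comparable constants, every such piece has the same upper box dimension $\limsup_{r\to0}\alpha_A(r)$, so the infimum over countable decompositions cannot drop below it, giving $\dim_PA=\overline{\dim}_BA$. The main obstacle I anticipate is this last point: verifying carefully that the homogeneity and doubling constants survive restriction to $B(x,\rho)\cap A$ (so that no countable decomposition can lower the upper box dimension) — equivalently, showing $A$ contains no "small-dimensional" pieces — is the only step that is not a short computation, and it is where the measure-version hypotheses (\ref{cc}) and (\ref{***}), rather than mere Ahlfors regularity, are genuinely used.
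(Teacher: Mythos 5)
Your proposal matches the paper's proof almost step for step: the iteration of (\ref{***}) to trap $\mu(B(x,r))$ between powers of $\delta_A$ and $\Delta_A$, the packing/covering comparison (maximal $r$-separated sets versus minimal covers, i.e.\ $N(A,2r)\le P(A,r)\le N(A,r/2)$) combined with (\ref{cc}) and doubling to get $f_A(r)\sim\alpha_A(x,r)$, and the mass distribution principle for $\dim_H A\ge\liminf_{r\to0}\alpha_A(r)$ are all exactly the paper's arguments. The one place you genuinely diverge is the inequality $\dim_P A\ge\limsup_{r\to0}\alpha_A(r)$: the paper gets this in one line by citing Cutler's density theorem for packing measure (Corollary 3.20(b) of \cite{Cutler}), which converts the uniform estimate $\mu(B(x,r_n))\le r_n^{\,s}$ along a sequence $r_n\to0$ directly into $\mathcal{P}^s(A)\ge\mu(A)>0$; you instead propose the modified-upper-box-dimension characterization $\dim_P A=\inf\{\sup_i\overline{\dim}_B A_i\}$ together with the observation that homogeneity forces $\overline{\dim}_B(B(x,\rho)\cap A)=\overline{\dim}_B A$ for every ball. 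Your route is viable, but note two things you should make explicit: (i) passing from arbitrary countable decompositions to balls requires the Baire category theorem (some closed $A_i$ must contain a relatively open piece of the compact set $A$) --- the homogeneity of the restricted measure alone does not rule out decompositions into non-ball pieces; and (ii) you must justify that Cutler's radius packing dimension in a general metric space satisfies the Tricot-type identity with the modified upper box dimension, which is an extra input beyond the density theorem the paper actually uses. The paper's citation avoids both issues, at the cost of invoking a less elementary external result; your version is more self-contained once the Baire step is written out.
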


These properties show that for a homogeneous set $A$,

\begin{itemize}
\item The behavior of $\alpha_{A}(r)$ when $r\rightarrow0$ is only determined
by $N(A,r)$ as in (\ref{behavior}), i.e., $\alpha_{A}(x,r)\sim\frac{\log
N(A,r)}{-\log r},$ depending on the geometric structure of $A$ and not
depending on the choice of the Borel measure $\mu;$

\item The behavior of $\alpha_{A}(r)$ when $r\rightarrow0$ plays a role more
important than fractal dimensions. We concern not only the dimension values
$\liminf_{r\rightarrow0}\alpha_{A}(r)$ and $\limsup_{r\rightarrow0}\alpha
_{A}(r),$ but also the behavior of $\alpha_{A}(r)$ when $r\rightarrow0.$
\end{itemize}

\subsection{\textbf{Moran sets are homogeneous}}

\

Moran sets were first studied in \cite{Moran} by Moran. We recall this fractal class.

Fix a compact set $J\subset\mathbb{R}^{d}$ with its interior non-empty. Fix a
ratio sequence $\{c_{k}\}_{k\geq1\text{ }}$and an integer sequence
$\{n_{k}\}_{k\geq1}$ satisfying $c_{k}\in(0,1)$ and $n_{k}\geq2$ for all $k.$
For $D_{1},$ $D_{2}\subset\mathbb{R}^{d}$, we say that $D_{1}$ is
geometrically similar to $D_{2}$ with ratio $r$, if there is a similitude
$S$\ with ratio $r$ such that $D_{1}=S(D_{2}).$ Let $\Omega_{0}=\{\emptyset\}$
with the empty word $\emptyset$, and let $\Omega_{k}=\{$word $i_{1}\cdots
i_{k}:$ for the $t$-th letter, $i_{t}\in\mathbb{N}\cap\lbrack1,n_{t}]$ for all
$t\}$ for $k\geq1$. In this paper, we always assume that
\begin{equation}
c_{\ast}=\inf_{k}c_{k}>0. \label{c_*}%
\end{equation}

Suppose there are $J_{1},J_{2},\cdots,J_{n_{1}}\subset J_{\emptyset}=J$
geometrically similar to $J$ with ratio $c_{1}$ and their interiors being
pairwise disjoint. Inductively, for any $i_{1}\cdots i_{k-1}\in\Omega_{k-1}$,
suppose there are $J_{i_{1}\cdots i_{k-1}1},J_{i_{1}\cdots i_{k-1}2}%
,,\cdots,J_{i_{1}\cdots i_{k-1}n_{k}}\subset J_{i_{1}\cdots i_{k-1}}$
geometrically similar to $J_{i_{1}\cdots i_{k-1}}$ with ratio $c_{k}$ and
their interiors being pairwise disjoint. Then
\begin{equation}
E= {\displaystyle\bigcap\nolimits_{k=0}^{\infty}} {\displaystyle\bigcup
\nolimits_{i_{1}\cdots i_{k}\in\Omega_{k}}} J_{i_{1}\cdots i_{k}} \label{E}%
\end{equation}
is called a Moran set. We denote $E\in\mathcal{M}(J,\{n_{k}\}_{k}%
,\{c_{k}\}_{k}).$

\begin{figure}[ptbh]
\centering\includegraphics[width=80mm]{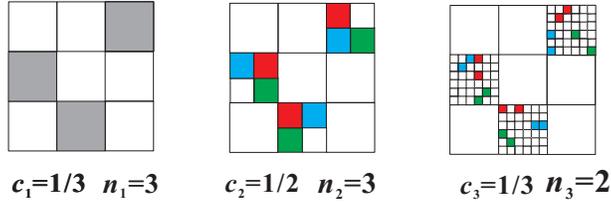}\caption{The first three
steps of the construction of a Moran set with $J=[0,1]^{2}$}%
\end{figure}

Many classical self-similar sets are Moran sets. For the Cantor ternary set
and the von Koch curve, setting $n_{k}\equiv2$ in both cases, letting
$c_{k}\equiv1/3$ or $c_{k}\equiv1/\sqrt{3}$, respectively, and taking $J$ as
$[0,1]$ or a suitable solid triangle, respectively, we get their structures.
For details of a more general structure, please refer to \cite{Wen}.

Under the assumption (\ref{c_*}), we have

\begin{proposition}
\label{P:Mor is Homo}Any Moran set is homogeneous. Suppose $E$ is a Moran set
defined above. Then we can take $\alpha_{E}(r)=\frac{\log n_{1}\cdots n_{k}%
}{-\log c_{1}\cdots c_{k}}$ if $(c_{1}\cdots c_{k})|J|<r\leq(c_{1}\cdots
c_{k-1})|J|.$
\end{proposition}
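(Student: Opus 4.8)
The plan is to equip $E$ with its natural Moran probability measure and to reduce everything to one two-sided mass estimate. Let $\mu$ be the Borel probability measure supported on $E$ characterised by $\mu(E\cap J_{i_{1}\cdots i_{k}})=(n_{1}\cdots n_{k})^{-1}$ for every admissible word $i_{1}\cdots i_{k}$; equivalently $\mu$ is the image, under the coding map $\pi(i_{1}i_{2}\cdots)=\bigcap_{k}J_{i_{1}\cdots i_{k}}$, of the infinite product of the uniform probabilities on $\{1,\dots,n_{k}\}$, and its existence is classical (see \cite{Wen}). For $0<r\le|J|$ let $k=k(r)$ be the index with $(c_{1}\cdots c_{k})|J|<r\le(c_{1}\cdots c_{k-1})|J|$ (we say $r$ has \emph{level} $k$). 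The goal is the estimate
\[
(n_{1}\cdots n_{k})^{-1}\le\mu(B(x,r))\le C_{0}\,(n_{1}\cdots n_{k})^{-1}\qquad(x\in E,\ 0<r\le|J|),
\]
where $C_{0}$ depends only on $d$, $J$ and $c_{*}$. I first record two facts about the construction: since $J$ has non-empty interior, a Lebesgue-volume comparison inside a similar copy of $J$ forces $n_{k}\le c_{k}^{-d}$, hence $n_{k}\le c_{*}^{-d}$ and $c_{k}\le 2^{-1/d}$ for all $k$; consequently $\log(n_{1}\cdots n_{k})$ and $-\log(c_{1}\cdots c_{k})$ both grow linearly in $k$ with ratio confined to a fixed bounded interval. (Also $|E|>0$, as $E$ contains more than one point.)

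The lower bound is immediate: $x$ lies in some $J_{w}$ with $|w|=k$, and $\operatorname{diam}J_{w}=(c_{1}\cdots c_{k})|J|<r$, so $E\cap J_{w}\subseteq B(x,r)$ and $\mu(B(x,r))\ge\mu(E\cap J_{w})=(n_{1}\cdots n_{k})^{-1}$. For the upper bound, let $\mathcal{W}$ be the set of length-$k$ words $w$ with $J_{w}\cap B(x,r)\neq\varnothing$; each such $J_{w}$ has diameter $<r$, hence $J_{w}\subseteq B(x,2r)$, and being a similar copy of $J$ with ratio $c_{1}\cdots c_{k}$ it contains a ball of radius $\rho_{0}(c_{1}\cdots c_{k})$ inside $\operatorname{int}J_{w}$, where $\rho_{0}>0$ depends only on $J$. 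These inner balls are pairwise disjoint and lie in $B(x,2r)$, so comparing volumes and using $r\le(c_{1}\cdots c_{k-1})|J|=(c_{1}\cdots c_{k})|J|/c_{k}\le(c_{1}\cdots c_{k})|J|/c_{*}$ gives $\#\mathcal{W}\le(2|J|/(\rho_{0}c_{*}))^{d}=:C_{0}$. Since $B(x,r)\cap E\subseteq\bigcup_{w\in\mathcal{W}}(E\cap J_{w})$, we obtain $\mu(B(x,r))\le C_{0}(n_{1}\cdots n_{k})^{-1}$, completing the displayed estimate. In particular condition (\ref{cc}) of Definition \ref{D:Main} holds with $\lambda_{E}=C_{0}$.

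For condition (\ref{***}) I use that, because $c_{*}\le c_{k}\le 2^{-1/d}$ for all $k$, there are $\kappa_{0}\in(0,1)$ and an integer $M_{0}$, depending only on $d$ and $c_{*}$, so that if $r$ has level $k$ then $\kappa_{0}r$ has level $k'$ with $k+1\le k'\le k+M_{0}$; iterating, $\kappa_{0}^{N}r$ has level between $k+N$ and $k+NM_{0}$. Combining this with the displayed estimate,
\[
C_{0}^{-1}2^{N}\le C_{0}^{-1}\,n_{k+1}\cdots n_{k'}\le\frac{\mu(B(x,r))}{\mu(B(x,\kappa_{0}^{N}r))}\le C_{0}\,n_{k+1}\cdots n_{k'}\le C_{0}\,c_{*}^{-dNM_{0}},
\]
so picking $N$ with $2^{N}>C_{0}$ yields (\ref{***}) with $\kappa_{E}=\kappa_{0}^{N}$, $\delta_{E}=C_{0}^{-1}2^{N}>1$, $\Delta_{E}=C_{0}c_{*}^{-dNM_{0}}$; hence $E$ is homogeneous. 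Finally, for $x\in E$ and $r$ of level $k$ the displayed estimate gives $\log\mu(B(x,r))=-\log(n_{1}\cdots n_{k})+O(1)$, and clearly $\log r=\log(c_{1}\cdots c_{k})+O(1)$, with absolute $O(1)$'s, so
\[
\alpha_{E}(x,r)=\frac{\log\mu(B(x,r))}{\log r}=\frac{\log(n_{1}\cdots n_{k})+O(1)}{-\log(c_{1}\cdots c_{k})+O(1)};
\]
since numerator and denominator grow linearly in $k$ with bounded ratio, the right-hand side differs from $\log(n_{1}\cdots n_{k})/(-\log(c_{1}\cdots c_{k}))$ by $O(1/|\log r|)$, which is precisely $\alpha_{E}(x,r)\sim\frac{\log n_{1}\cdots n_{k}}{-\log c_{1}\cdots c_{k}}$, so the latter is an admissible choice of $\alpha_{E}(r)$.

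The step I expect to be the crux is the uniform upper bound $\mu(B(x,r))\le C_{0}(n_{1}\cdots n_{k})^{-1}$: the Moran construction assumes no separation beyond disjoint interiors, so the only control on how many basic sets of one generation can accumulate in a ball is the volume/packing argument above, and this is exactly where the standing hypotheses $c_{*}>0$ and $\operatorname{int}J\neq\varnothing$ enter essentially. Granting that estimate, (\ref{cc}) is immediate, the improvement from "one generation deeper" to a genuine multiplicative gap in (\ref{***}) is a routine iteration of $\kappa_{0}$, and the equivalence defining $\alpha_{E}(r)$ reduces to the linear growth of $\log(n_{1}\cdots n_{k})$ and $-\log(c_{1}\cdots c_{k})$.
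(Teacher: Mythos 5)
Your proposal follows essentially the same route as the paper: you build the Moran measure as the push-forward of the uniform Bernoulli measure under the coding map, prove a two-sided estimate $\mu(B(x,r))\asymp(n_{1}\cdots n_{k})^{-1}$ for $r$ of level $k$ by a Lebesgue-volume packing count of the basic sets meeting $B(x,r)$ (this is the paper's Lemma \ref{L:Moran}, with the cosmetic difference that you count level-$k$ pieces via inscribed balls while the paper counts level-$(k-1)$ pieces via their full interiors), and you then deduce (\ref{cc}), (\ref{***}) and the formula for $\alpha_{E}(r)$ by the same arithmetic based on $c_{\ast}\leq c_{k}\leq 2^{-1/d}$ and $2\leq n_{k}\leq c_{\ast}^{-d}$.

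One point needs repair. The identity $\mu(E\cap J_{i_{1}\cdots i_{k}})=(n_{1}\cdots n_{k})^{-1}$ is not equivalent to the push-forward definition and is false in general: distinct basic sets of the same generation are only required to have disjoint \emph{interiors}, so they may share boundary points, $E$ may charge those boundaries, and then $f^{-1}(J_{\sigma})$ can strictly contain the cylinder $C_{\sigma}$; one only gets $\mu(E\cap J_{\sigma})\geq\nu(C_{\sigma})=(n_{1}\cdots n_{k})^{-1}$, as in (\ref{E:mufnu}). This does not affect your lower bound, but your upper-bound step ``$B(x,r)\cap E\subseteq\bigcup_{w\in\mathcal{W}}(E\cap J_{w})$, hence $\mu(B(x,r))\leq\#\mathcal{W}\cdot(n_{1}\cdots n_{k})^{-1}$'' invokes the exact identity. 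The standard fix, which is exactly what the paper does in (\ref{yy}), is to do the bookkeeping upstairs in the symbol space: if $f(v)\in B(x,r)$ then $J_{v|_{k}}$ meets $B(x,r)$, so $f^{-1}(B(x,r))\subseteq\bigcup_{w\in\mathcal{W}}C_{w}$ and $\mu(B(x,r))\leq\#\mathcal{W}\cdot(n_{1}\cdots n_{k})^{-1}$ follows from $\nu(C_{w})=(n_{1}\cdots n_{k})^{-1}$. With that one-line change your argument is complete and coincides in substance with the paper's proof.
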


Note that bilipschitz image of a homogeneous set is homogeneous (Lemma
\ref{L:Bili of homo}).

\begin{corollary}
Any bilipschitz image of a Moran set is homogeneous.
\end{corollary}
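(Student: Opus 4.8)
The plan is to simply chain the two results already established. By Proposition \ref{P:Mor is Homo}, every Moran set $E\in\mathcal{M}(J,\{n_{k}\}_{k},\{c_{k}\}_{k})$ (under the standing hypothesis $c_{\ast}=\inf_{k}c_{k}>0$) is homogeneous in the sense of Definition \ref{D:Main}. By Lemma \ref{L:Bili of homo}, the class of homogeneous sets is closed under bilipschitz maps. Hence, if $A=f(E)$ for a bilipschitz bijection $f\colon E\to A$, then $A$ is homogeneous. The only point that is not covered verbatim by these two statements is $|A|>0$, and this is immediate: if $L\geq1$ is a bilipschitz constant for $f$, then $|A|\geq L^{-1}|E|>0$, and $A$ is compact as the continuous image of the compact set $E$.

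For completeness one should recall the mechanism behind Lemma \ref{L:Bili of homo}, since that is where the content lies. Let $\mu$ be the Borel probability measure witnessing homogeneity of $E$, and push it forward to $\nu=f_{\ast}\mu$, a Borel probability measure supported on $A$. The geometric input is the two-sided inclusion
\[
f\bigl(B(x,L^{-1}r)\cap E\bigr)\subseteq B(f(x),r)\cap A\subseteq f\bigl(B(x,Lr)\cap E\bigr),
\]
valid for all $x\in E$ and $r>0$, whence $\mu(B(x,L^{-1}r))\leq\nu(B(f(x),r))\leq\mu(B(x,Lr))$. Thus every $\nu$-measure of a ball in $A$ is sandwiched between $\mu$-measures of two balls in $E$ whose radii differ only by the fixed factor $L^{2}$.

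It remains to verify the two conditions of Definition \ref{D:Main} for $(A,\nu)$. For condition (1), given $y_{i}=f(x_{i})\in A$, sandwich $\nu(B(y_{i},r))$ as above and apply (\ref{cc}) for $\mu$ together with finitely many applications of (\ref{***}) to replace the radii $L^{-1}r$ and $Lr$ by the common radius $r$ up to a multiplicative constant depending only on $L,\kappa_{E},\delta_{E},\Delta_{E}$; this produces $\lambda_{A}$. For condition (2), choose $\kappa_{A}$ to be a small enough power of $\kappa_{E}$ so that $L^{2}\kappa_{A}\leq\kappa_{E}^{m}$ for the relevant $m$, and bound $\nu(B(f(x),r))/\nu(B(f(x),\kappa_{A}r))$ between ratios of $\mu$-measures of balls of comparable radii, iterating (\ref{***}) a bounded number of times to get constants $1<\delta_{A}\leq\Delta_{A}<\infty$. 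The absence of atoms and $|A|>0$ being automatic, $A$ is homogeneous. The only mildly delicate step --- and hence the main obstacle --- is this bookkeeping of constants when passing between balls of radii $r$, $L^{-1}r$, $Lr$: one must invoke (\ref{***}) a number of times comparable to $\log L/|\log\kappa_{E}|$ to absorb the bilipschitz distortion and check that $\delta_{A}>1$ survives; but this is precisely the content of Lemma \ref{L:Bili of homo}, which we are entitled to use, so the corollary itself is immediate.
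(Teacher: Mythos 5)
Your proof is correct and is exactly the paper's argument: the corollary follows by combining Proposition \ref{P:Mor is Homo} (Moran sets are homogeneous) with Lemma \ref{L:Bili of homo} (bilipschitz images of homogeneous sets are homogeneous), and your recap of the pushforward-measure mechanism matches the paper's proof of that lemma.
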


\subsection{Approximation theorem}

$\ $\

How to describe the distance between two homogeneous sets? As usual, we can
use Hausdorff distance d$_{H}.$ For homogeneous sets $A$ and $B$ in a metric
space $(X,$d$)$,
\[
\text{\textrm{d}}_{H}(A,B)=\max\{\sup\nolimits_{x\in A}\text{\textrm{d}%
}(x,B),\sup\nolimits_{y\in B}\text{\textrm{d}}(y,A)\},
\]
where \textrm{d}$(x,B)=\inf_{z\in B}$\textrm{d}$(x,z).$

We give a new pseudo-distance. Given homogeneous sets $A$ and $B,$ we
consider
\[
\chi(A,B):=\limsup_{r\rightarrow0}\left\vert \log\frac{\alpha_{A}(r)}%
{\alpha_{B}(r)}\right\vert .
\]
It is easy to check that $\chi$ is a pseudo-distance on the space of all
homogeneous sets, i.e., $\chi(A,B)\geq0$, $\chi(A,B)=\chi(B,A)$ and
$\chi(A,B)+\chi(B,C)\geq\chi(A,C).$ In fact, $\chi(A,B)=0$ if and only if
$\lim\limits_{r\rightarrow0}\frac{\alpha_{A}(r)}{\alpha_{B}(r)}=1,$ i.e.,
$\lim_{r\rightarrow0}(\alpha_{A}(x,r)-\alpha_{B}(y,r))=0$ for all $x\in A$,
$y\in B.$

\begin{proposition}
\label{p:mei}Given a homogeneous set $A,$ for any $\varepsilon>0,$ there
exists $\delta>0$ such that if $\chi(A,B)<\delta$ for a homogeneous set $B,$
then
\[
|\dim_{P}A-\dim_{P}B|,|\dim_{H}A-\dim_{H}B|<\varepsilon.
\]

\end{proposition}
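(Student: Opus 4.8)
The plan is to deduce this directly from Proposition \ref{P:Prop of Homo}, which identifies $\dim_H A$ and $\dim_H B$ with $\liminf_{r\to0}\alpha_A(r)$ and $\liminf_{r\to0}\alpha_B(r)$, and $\dim_P A$, $\dim_P B$ with the corresponding $\limsup$'s, all of these being finite and strictly positive. The mechanism is simple: a small value of $\chi(A,B)$ forces $\alpha_A(r)$ and $\alpha_B(r)$ to stay within a bounded multiplicative factor of each other for all sufficiently small $r$, and both $\liminf_{r\to0}$ and $\limsup_{r\to0}$ are monotone and positively homogeneous under multiplication by a positive constant.

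First I would unwind the definition. Assume $\chi(A,B)<\delta$. Since $\chi(A,B)=\limsup_{r\to0}\bigl|\log(\alpha_A(r)/\alpha_B(r))\bigr|$, there is $r_0>0$ such that $\alpha_A(r),\alpha_B(r)>0$ and $\bigl|\log(\alpha_A(r)/\alpha_B(r))\bigr|<\delta$ for all $0<r<r_0$, where positivity for small $r$ is guaranteed by $\liminf_{r\to0}\alpha_A(r)>0$ and $\liminf_{r\to0}\alpha_B(r)>0$ from Proposition \ref{P:Prop of Homo}. Equivalently,
\[
e^{-\delta}\,\alpha_B(r)\le\alpha_A(r)\le e^{\delta}\,\alpha_B(r)\qquad(0<r<r_0).
\]
Note this is independent of the representatives chosen in the classes $[\alpha_A(x,r)]$ and $[\alpha_B(y,r)]$: changing representatives perturbs each function by $O(|\log r|^{-1})\to0$ while keeping it bounded away from $0$, so the $\limsup$ defining $\chi$ is unaffected.

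Next, applying $\liminf_{r\to0}$ and then $\limsup_{r\to0}$ to the displayed inequalities and invoking Proposition \ref{P:Prop of Homo} yields
\[
e^{-\delta}\dim_H B\le\dim_H A\le e^{\delta}\dim_H B,\qquad e^{-\delta}\dim_P B\le\dim_P A\le e^{\delta}\dim_P B.
\]
From the packing pair I get $\dim_P A-\dim_P B\le(e^{\delta}-1)\dim_P B$ and $\dim_P B-\dim_P A\le(e^{\delta}-1)\dim_P A$; combining with $\dim_P B\le e^{\delta}\dim_P A$ gives $|\dim_P A-\dim_P B|\le(e^{\delta}-1)e^{\delta}\dim_P A$. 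The same computation with the Hausdorff pair gives $|\dim_H A-\dim_H B|\le(e^{\delta}-1)e^{\delta}\dim_H A\le(e^{\delta}-1)e^{\delta}\dim_P A$. Since $\dim_P A<\infty$ depends only on $A$, it now suffices to choose $\delta>0$ small enough that $(e^{\delta}-1)e^{\delta}\dim_P A<\varepsilon$, which completes the proof.

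I do not expect a genuine obstacle: the statement is essentially a corollary of Proposition \ref{P:Prop of Homo}. The only two points requiring a moment's care are that the final estimate be phrased entirely through a quantity depending on $A$ alone — hence the step of bounding $\dim_P B$ and $\dim_H B$ by $e^{\delta}\dim_P A$ before selecting $\delta$ — and the routine verification that $\chi$, and therefore the whole argument, is insensitive to the choice of representative function within the equivalence classes $[\alpha_A(\cdot,r)]$ and $[\alpha_B(\cdot,r)]$.
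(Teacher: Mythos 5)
Your argument is correct and is essentially the paper's own proof: both reduce the statement to Proposition \ref{P:Prop of Homo} and convert the multiplicative control $e^{-\delta}\le\alpha_A(r)/\alpha_B(r)\le e^{\delta}$ coming from $\chi(A,B)<\delta$ into an additive bound by exploiting the finiteness of $\sup_{r<r_0}\alpha_A(r)$ (equivalently, of $\dim_P A$). The only cosmetic difference is that the paper first deduces $|\alpha_B(r)-\alpha_A(r)|<\varepsilon$ pointwise for small $r$ and then passes to $\liminf$/$\limsup$, whereas you take the limits first and then compare; the two orderings are interchangeable here.
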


We can approximate Ahlfors--David regular sets by self-similar sets as in
\cite{Mattila Saarenen}. For homogeneous sets, we replace self-similar sets by
Moran sets.

\begin{theorem}
\label{T:E Emb A}Suppose $A$ is a homogeneous set. Then for any $\varepsilon
>0,$ we can find a Moran set $E$ in a Euclidean space and a bilipschitz map
$f$ from $E$ to $A$ such that
\[
\mathrm{d}_{\text{H}}(f(E),A)<\varepsilon\quad\text{ and }\quad\chi
(E,A)=\chi(f(E),A)<\varepsilon.
\]
In particular, if $A$ is a homogeneous set in $\mathbb{R}^{d},$ for any
$\varepsilon>0,$ we can find\ a Moran set $F\subset A$ such that
\[
\mathrm{d}_{\text{H}}(F,A)<\varepsilon\text{\ and }0\leq(\dim_{P}A-\dim
_{P}F),(\dim_{H}A-\dim_{H}F)<\varepsilon.
\]

\end{theorem}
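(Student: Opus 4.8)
The plan is to build the Moran set $E$ directly from the covering-number data of $A$ encoded in $\alpha_A(r)$, and then construct the bilipschitz map $f$ by a level-by-level matching of pieces. First I would fix a rapidly decreasing sequence of scales $r_k\downarrow 0$ (say $r_k = \rho^k |A|$ for a suitable small $\rho$ depending on the constants $\lambda_A,\kappa_A,\delta_A,\Delta_A$ and on $\varepsilon$). At each scale I would choose a maximal $r_k$-separated set in $A$; by homogeneity (specifically Proposition~\ref{P:Prop of Homo}(2), $f_A(r)\sim\alpha_A(x,r)$, together with \eqref{cc} and the doubling-type bound \eqref{***}) the number of such points inside any ball $B(x,r_{k-1})\cap A$ is comparable to $N_k := $ (a quantity of size roughly $(r_k/r_{k-1})^{-\alpha_A(r_k)}$), with multiplicative constants uniform in $k$. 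I would then take $n_k$ to be the \emph{exact} integer count of chosen points in a representative parent ball, or more carefully a common value obtained by pruning so that every parent at level $k-1$ has exactly $n_k$ children; the pruning loses only a bounded factor at each step because of \eqref{cc}, and one checks it does not disturb the asymptotics of $\tfrac{\log n_1\cdots n_k}{-\log c_1\cdots c_k}$ beyond an $O(|\log r|^{-1})$ error, which is exactly what $\chi(E,A)<\varepsilon$ asks for once $\rho$ (hence the ratios $c_k$, taken equal to $r_k/r_{k-1}$ up to normalization) is chosen small enough.

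Next I would realize these combinatorial data as an honest Moran set $E\in\mathcal M(J,\{n_k\},\{c_k\})$ in some $\mathbb R^D$: choose $D$ large enough (a fixed $D$ suffices, since the doubling property of $A$ bounds $n_k$ in terms of $\rho$, independently of $k$) that one can place $n_k$ disjoint scaled copies of the unit cube $J=[0,1]^D$ inside $J$ with ratio $c_k$ and with the \emph{gaps between copies bounded below by a fixed multiple of the diameter} — this uniform-gap condition is what will make the inverse map Lipschitz. Define $f$ on $E$ by sending the $k$-th level cylinder $J_{i_1\cdots i_k}\cap E$ to the corresponding chosen ball $B(x_{i_1\cdots i_k}, r_k)\cap A$, matched consistently down the tree; $f$ is then the limit of these identifications on the (compact) set $E$. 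For the Lipschitz estimate in one direction: if $u,v\in E$ first separate at level $k$, then $\mathrm d_E(u,v)\asymp r_k$ by the uniform-gap condition, while $\mathrm d_A(f(u),f(v))\le \mathrm{diam}(B(x_{i_1\cdots i_{k-1}},r_{k-1})\cap A)\le 2r_{k-1}\asymp r_k$. For the reverse direction: since $f(u),f(v)$ lie in two \emph{different} children balls at level $k$ that sit inside a common parent, I need $\mathrm d_A(f(u),f(v))\gtrsim r_k$, i.e.\ that distinct chosen $r_k$-separated points in $A$ stay $r_k$-apart even after restricting attention to $A$ — that is automatic from the separation, but I also need that the two child \emph{sets} $f(J_{i_1\cdots i_k}\cap E)$ don't come back close together at a coarser scale, which follows because they are contained in disjoint balls $B(x,r_k)$ whose centers are $r_k$-separated, so any two points drawn from them are within $4r_k$ of being $r_k$-separated; choosing $\rho$ small makes the constant work. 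Finally $\mathrm d_H(f(E),A)<\varepsilon$ holds because the chosen points at level $k$ are a maximal $r_k$-net, so $f(E)$ is $\varepsilon$-dense once $r_k<\varepsilon$; and $\chi(E,A)<\varepsilon$ is the asymptotic computed above, with $\chi(f(E),A)=\chi(E,A)$ since $f$ is bilipschitz and bilipschitz maps preserve $\alpha$ up to $\sim$ (Lemma~\ref{L:Bili of homo}). The ``in particular'' clause: when $A\subset\mathbb R^d$ one repeats the construction but places the child balls \emph{inside $A$ itself} rather than in an abstract cube — one uses the $r_k$-net in $A$ directly as the seeds of a Moran construction with $J$ replaced by $J_{i_1\cdots i_{k-1}}$ a small ball meeting $A$, giving $F\subset A$; then $\dim_H,\dim_P$ of $F$ match those of $A$ up to $\varepsilon$ by Proposition~\ref{p:mei} applied to $\chi(F,A)<\varepsilon$, and the inequalities are one-sided because $F\subset A$ forces $\dim F\le\dim A$.

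The main obstacle I anticipate is the \emph{uniformization of the branching number} $n_k$ and the \emph{uniform-gap placement}: a priori the number of net points in different parent balls at the same level can differ by the multiplicative constant in \eqref{cc}, and the $r_k$-separated net points in $A$ need not be arrangeable with comparable gaps. Handling this requires a careful pruning argument that (i) keeps a definite fraction of children under every parent, so that no measure/covering mass is lost beyond a bounded factor per level, and (ii) tracks the accumulated error in $\log n_1\cdots n_k$ to confirm it stays $o(|\log r|)$, equivalently $\chi(E,A)\to 0$ as the construction parameters are refined. The geometric realization with uniform gaps is then a packing lemma: in $\mathbb R^D$ with $D$ fixed and large, $N$ balls of radius $\rho$ (for $N$ up to the doubling bound) can be packed into the unit cube with pairwise distances $\ge c(D)$, which is elementary but must be stated so the bilipschitz constants of $f$ depend only on the global data and not on $k$.
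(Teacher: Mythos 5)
Your proposal follows essentially the same route as the paper's proof: a nested family of disjoint balls in $A$ whose level-$k$ branching number is made uniform by taking the integer part of a packing-number lower bound valid for every parent ball, a geometric realization as a Moran set with constant ratio and uniform gaps in a sufficiently high-dimensional Euclidean space, the natural tree bijection being bilipschitz, and the bounded per-level loss from pruning absorbed into $\chi(E,A)<\varepsilon$ by taking the scale ratio small. The only slip is describing the pruning error as ``$O(|\log r|^{-1})$'' — it is actually a constant of order $|\log C_0|/|\log\rho|$ (which is precisely why $\rho$ must be chosen small, as you then correctly say), matching the paper's term $(k-1)\log C_0/(k\log\eta)$.
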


\subsection{Embedding theorem}

\ \

\begin{definition}
For metric spaces $(X,\mathrm{d}_{X})$ and $(Y,\mathrm{d}_{Y}),$ we say that
$X$ can be bilipschitz embedded into $Y,$\ denoted by $X\hookrightarrow Y,$ if
there exists an injection $f\colon X\rightarrow Y$ and a constant $L\geq1,$
such that for all $x_{1},x_{2}\in X,$%
\[
\mathrm{d}_{X}(x_{1},x_{2})/L\leq\mathrm{d}_{Y}(f(x_{1}),f(x_{2}))\leq
L\mathrm{d}_{X}(x_{1},x_{2}).
\]
Furthermore, if $f$ is a bijection, we say that $X$\ and $Y$ are bilipschitz equivalent.
\end{definition}

Mattila and Saaranen \cite{Mattila Saarenen}, Llorente and Mattila \cite{LM}
studied bilipschitz embeddings between subsets of Ahlfors--David regular sets
and self-conformal sets respectively. Inspired by \cite{Mattila Saarenen},
Deng, Wen, Xiong and Xi \cite{Deng juan} gave the results on self-similar sets.

In fact, Mattila and Saaranen \cite{Mattila Saarenen}\ obtained the following
interesting result: For an Ahlfors--David $s$-regular set $A$ and $t$-regular
set $B$ with $s<t$ and any $\varepsilon>0,$ there exists a self-similar set
$C_{\varepsilon}$ such that $\dim_{H}C_{\varepsilon}\in(s-\varepsilon,s]$ such
that $C_{\varepsilon}\hookrightarrow A$ and $C_{\varepsilon}\hookrightarrow
B;$ furthermore, if $A$ is \emph{uniformly disconnected}, then
$A\hookrightarrow B.$ An interesting fact is that $A$ is uniformly
disconnected if $s<1.$

It is natural to ask how to generalize the above bilipschitz embedding result
to homogeneous sets?

The following lemma is given a straightforward proof in Section 5.1.

\begin{lemma}
\label{L:blip}Let $A$ and $B$ be homogeneous with measures $\mu$ and $\nu,$
$\alpha_{A}(r)\sim\alpha_{A}(x^{\ast},r)$ and $\alpha_{B}(r)\sim\alpha
_{B}(y^{\ast},r)$ for some $x^{\ast}\in A$ and $y^{\ast}\in B$, and let
$A\hookrightarrow B.$ Then for any $x\in A,$ $y\in B$ and $r^{\prime}%
<r\leq\min(|A|,|B|),$
\begin{equation}
\frac{\mu(B(x,r))}{\mu(B(x,r^{\prime}))}\leq C\frac{\nu(B(y,r))}%
{\nu(B(y,r^{\prime}))} \label{E:unex}%
\end{equation}
where $C$ is an independent constant. Moreover, there is non-decreasing
function $\varepsilon:(0,\delta)\rightarrow(0,\infty)$ with $\delta\in(0,1)$
and $\varepsilon(r)\downarrow0$ as $r\downarrow0$ such that
\begin{equation}
\sup\limits_{r^{\prime}<r_{0}r<r<r_{0}}\left\vert \frac{\alpha_{A}(r)\log
r-\alpha_{A}(r^{\prime})\log r^{\prime}}{\alpha_{B}(r)\log r-\alpha
_{B}(r^{\prime})\log r^{\prime}}\right\vert \leq1+\varepsilon(r_{0})\text{ }%
\end{equation}
for every $r_{0}<\delta.$
\end{lemma}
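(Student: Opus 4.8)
The plan is to derive both assertions from two elementary facts about a homogeneous set $A$ with measure $\mu$: by (\ref{cc}) the quantity $\mu(B(x,r))/\mu(B(x,r'))$ changes by at most the factor $\lambda_{A}^{2}$ when the common centre $x$ is moved, and by iterating (\ref{***}) it lies between two powers of $r/r'$ whose exponents are comparable to $\log(r/r')$, with constants depending only on $\delta_{A},\Delta_{A},\kappa_{A}$; the same holds for $B$ with $\nu$.

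First I prove (\ref{E:unex}). Fix $x\in A$, $y\in B$ and $r'<r\le\min(|A|,|B|)$, and let $f\colon A\to B$ be a bilipschitz embedding with constant $L$. Choose a maximal $r'$-separated subset $\{a_{1},\dots,a_{m}\}$ of $B(x,r)\cap A$ (finite since $A$ is compact). By maximality the balls $B(a_{i},r')$ cover $B(x,r)\cap A$, so (\ref{cc}) gives
\[
\mu(B(x,r))\le\sum_{i=1}^{m}\mu(B(a_{i},r'))\le m\lambda_{A}\,\mu(B(x,r')),
\]
hence $m\ge\lambda_{A}^{-1}\mu(B(x,r))/\mu(B(x,r'))$. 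The images $f(a_{i})$ lie in $B(f(x),Lr)\cap B$ with pairwise distances $\ge r'/L$, so the balls $B(f(a_{i}),r'/(3L))$ are pairwise disjoint and contained in $B(f(x),\rho)$, where $\rho=\min(2Lr,|B|)$; therefore
\[
m\,\min_{i}\nu(B(f(a_{i}),r'/(3L)))\le\nu(B(f(x),\rho)).
\]
Using (\ref{cc}) for $B$ to replace each centre $f(a_{i})$ by $f(x)$, then iterating (\ref{***}) a bounded number of times -- depending only on $L$ and $\kappa_{B}$ -- to pass from the radius $r'/(3L)$ to $r'$ and from $\rho$ to $r$ (in the regime $2Lr>|B|$ one instead iterates (\ref{***}) from $|B|$ down to $r$, still a bounded number of steps since then $r>|B|/(2L)$), and finally using (\ref{cc}) once more to pass from $f(x)$ to the arbitrary point $y$, one obtains $m\le C'\,\nu(B(y,r))/\nu(B(y,r'))$ with $C'$ depending only on $\lambda_{A},\lambda_{B},\delta_{B},\Delta_{B},\kappa_{B}$ and $L$. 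Combined with the lower bound for $m$ this gives (\ref{E:unex}) with $C=\lambda_{A}C'$.

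For the second inequality, fix $x\in A$ and $y\in B$. Since $\alpha_{A}(r)\sim\alpha_{A}(x,r)$ and $\alpha_{B}(r)\sim\alpha_{B}(y,r)$ for all such $x,y$ (the general centre being handled by (\ref{cc})), there are a constant $c_{0}$ and a $\delta\in(0,\min(1,|A|,|B|))$ with
\[
\alpha_{A}(r)\log r=\log\mu(B(x,r))+\theta_{A}(r),\qquad\alpha_{B}(r)\log r=\log\nu(B(y,r))+\theta_{B}(r)
\]
and $|\theta_{A}(r)|,|\theta_{B}(r)|\le c_{0}$ for $r<\delta$. Set $P=\log[\mu(B(x,r))/\mu(B(x,r'))]\ge0$ and $Q=\log[\nu(B(y,r))/\nu(B(y,r'))]\ge0$, so the numerator and denominator in the statement are $P+\theta_{A}(r)-\theta_{A}(r')$ and $Q+\theta_{B}(r)-\theta_{B}(r')$. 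By (\ref{E:unex}) we have $P\le Q+\log C$, and since $r'<r_{0}r<r<r_{0}$ forces $r/r'>1/r_{0}$, iterating (\ref{***}) for $A$ and for $B$ yields $P,Q\ge c_{1}\log(1/r_{0})-c_{1}'$ for some constants $c_{1}>0$, $c_{1}'$. Shrinking $\delta$ so that $c_{1}\log(1/r_{0})-c_{1}'-2c_{0}\ge\tfrac12c_{1}\log(1/r_{0})>0$ for all $r_{0}<\delta$, both this numerator and denominator are positive and
\[
\left|\frac{\alpha_{A}(r)\log r-\alpha_{A}(r')\log r'}{\alpha_{B}(r)\log r-\alpha_{B}(r')\log r'}\right|\le\frac{P+2c_{0}}{Q-2c_{0}}\le\frac{Q+\log C+2c_{0}}{Q-2c_{0}}=1+\frac{\log C+4c_{0}}{Q-2c_{0}}\le 1+\frac{c_{2}}{\log(1/r_{0})}
\]
for a suitable constant $c_{2}$ (when $\log C+4c_{0}\le0$ the bound $1$ already works). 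Then $\varepsilon(r_{0})=c_{2}/\log(1/r_{0})$ is positive, non-decreasing on $(0,\delta)$ and tends to $0$ as $r_{0}\downarrow0$, which is what was required.

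The one genuinely delicate point is the radius bookkeeping in the proof of (\ref{E:unex}): every radius fed into (\ref{cc}) or (\ref{***}) must be kept $\le|A|$, respectively $\le|B|$, which is why the truncation $\rho=\min(2Lr,|B|)$ and the separate treatment of $2Lr>|B|$ are needed; once this is observed, all the remaining estimates are routine.
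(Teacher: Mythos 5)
Your proof is correct and follows essentially the same route as the paper's: you compare the maximal number of $r'$-separated points of $B(x,r)\cap A$ (the paper's $K_A(B(x,r),r')$) with its image under the embedding, and use homogeneity to identify both counts with the respective measure ratios up to bounded multiplicative constants, which is exactly the chain $K\asymp P\asymp N\asymp$ measure-ratio invoked in the paper; the derivation of the $1+\varepsilon(r_0)$ bound from (\ref{E:unex}) via the boundedness of $|\alpha(r)\log r-\log\mu(B(x,r))|$ and the divergence of $\log\bigl(\nu(B(y,r))/\nu(B(y,r'))\bigr)$ is likewise the paper's argument, just with the radius bookkeeping made explicit.
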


Using the above lemma, we have

\begin{proposition}
\label{P:A<B}There is a homogeneous set $B$ and a number $t\in(0,\dim_{H} B)$
such that any Ahlfors--David regular set $A$, e.g., any self-similar set
satisfying the strong separation condition $($\textbf{SSC}$),$ can not be
bilipschitz embedded into $B,$ whenever $t<\dim_{H}A<\dim_{H}B$.
\end{proposition}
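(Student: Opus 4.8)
The strategy is to construct a homogeneous set $B$ whose function $\alpha_B(r)$ oscillates in a controlled way between two values $t$ and $s$ with $t<s$, in such a manner that the ``rigidity'' constraint of Lemma~\ref{L:blip} is violated whenever one tries to bilipschitz embed an Ahlfors--David regular set $A$ with $\dim_H A\in(t,s)$. Concretely, I would take $B$ to be a Moran set (possible by Proposition~\ref{P:Mor is Homo}, which also gives the explicit formula for $\alpha_B$) built from an integer sequence $\{n_k\}$ and ratio sequence $\{c_k\}$ arranged in long alternating blocks: over some generations $n_k/(-\log c_k)$-data produce $\alpha_B(r)\approx s$, then over a much longer run of generations $\alpha_B(r)\approx t$, then back to $s$ over an even longer run, and so on. The key point is that the blocks where $\alpha_B\approx t$ can be made to dominate in the $\limsup$/$\liminf$ sense so that $\underline{\dim}_B B=\dim_H B=s$ (say) while along a sparse sequence of scales $\alpha_B$ dips to $t$; one must be a bit careful about which of $t,s$ is the liminf and which the limsup, but by Proposition~\ref{P:Prop of Homo} we have $\dim_H B=\liminf_{r\to0}\alpha_B(r)$, so I should arrange $s=\liminf$ and have occasional upward excursions, or vice versa — the construction is symmetric and I would pick whichever makes the obstruction cleanest.

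\textbf{Deriving the contradiction.} Suppose $A$ is Ahlfors--David regular, so by \eqref{regular} we may take $\alpha_A(r)\equiv \dim_H A=:u$ with $t<u<s=\dim_H B$. Assume $A\hookrightarrow B$. Apply Lemma~\ref{L:blip} with the roles $A,B$ as in the statement: for all $r'<r_0r<r<r_0$,
\[
\left|\frac{\alpha_A(r)\log r-\alpha_A(r')\log r'}{\alpha_B(r)\log r-\alpha_B(r')\log r'}\right|\le 1+\varepsilon(r_0).
\]
Since $\alpha_A\equiv u$, the numerator is exactly $u(\log r-\log r')=u\log(r/r')$. For the denominator I choose $r,r'$ to sit inside one of the long blocks where $\alpha_B(r)\approx\alpha_B(r')\approx s$; then the denominator is approximately $s\log(r/r')$, and the ratio is approximately $u/s<1$, which is consistent with the bound. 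The contradiction must come from the other direction: there is also a reverse inequality. Indeed, swapping $x\leftrightarrow y$ and using that bilipschitz embedding and \eqref{E:unex} is a two-sided estimate up to the constant $C$ — more precisely, applying Lemma~\ref{L:blip} and combining it with the doubling and uniform-perfectness bounds \eqref{***} for $B$ — one also gets a lower bound of the form $|\text{num}/\text{den}|\ge (1+\varepsilon(r_0))^{-1}$ valid on a comparable range of scales. Then choosing $r,r'$ inside a long block where $\alpha_B\approx t$ forces the ratio to be $\approx u/t>1$, contradicting the upper bound $1+\varepsilon(r_0)$ once $r_0$ is small enough that $\varepsilon(r_0)<u/t-1$. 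The number $t$ in the statement is then exactly the lower level of the oscillation.

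\textbf{Technical points and the main obstacle.} Several routine things need to be checked: that the Moran set $B$ really satisfies \eqref{c_*} (so $c_*=\inf_k c_k>0$ — achievable since $t,s$ are positive and finite, the ratios stay in a fixed compact subinterval of $(0,1)$); that $\alpha_B$ computed from the block formula of Proposition~\ref{P:Mor is Homo} genuinely has $\liminf=s$ and dips to $t$ along a sequence of scales, which is a bookkeeping argument about Cesàro-type averages of $\log n_k/(-\log c_k)$ over the blocks; and that one may pass from the single function $\alpha_B(r)$ to the pointwise $\alpha_B(y^{\ast},r)$ needed to invoke Lemma~\ref{L:blip}, which is fine since the two differ by $O(|\log r|^{-1})$ and this is absorbed into $\varepsilon(r_0)$. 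The genuine obstacle is the second (reverse) inequality: Lemma~\ref{L:blip} as stated gives only the one-sided bound $\frac{\mu(B(x,r))}{\mu(B(x,r'))}\le C\frac{\nu(B(y,r))}{\nu(B(y,r'))}$ and the corresponding one-sided ratio bound $\le 1+\varepsilon(r_0)$. To get a contradiction I need to extract information in the opposite direction, i.e. that $\nu(B(y,r))/\nu(B(y,r'))$ cannot be too much larger than $\mu(B(x,r))/\mu(B(x,r'))$ either. This should follow because $f(A)\subset B$ is itself Ahlfors--David regular (bilipschitz image of a regular set), hence $B$ contains a regular ``sub-piece'' at every location, and the measure $\nu$ on $B$ restricted to a neighbourhood of $f(A)$ is comparable to the $u$-regular measure there; feeding this back through \eqref{cc} for $B$ constrains $\alpha_B$ near such points. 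Making that last comparison precise — essentially showing that an Ahlfors--David $u$-regular subset of $B$ forces $\alpha_B(r)$ to be squeezed toward $u$ along the relevant scales, contradicting the engineered oscillation — is where the real work lies, and it is the step I would write out in full detail.
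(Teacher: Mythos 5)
Your overall strategy --- build a Moran set $B$ with long runs of scales over which its measure decays with exponent $t$, and play this against the necessary condition of Lemma \ref{L:blip} --- is exactly the paper's. But two points in the write-up would derail you. The first is that the ``reverse inequality'' you single out as the genuine obstacle is a red herring, and your route to it would fail. The contradiction you state at the very end (ratio $\approx u/t>1+\varepsilon(r_0)$ when $r,r'$ span a block over which the denominator grows with exponent $t<u$) uses \emph{only} the one-sided upper bound that Lemma \ref{L:blip} already provides; no lower bound enters. Your first attempt was merely testing the inequality on the wrong blocks: on an $s$-block the ratio $u/s<1$ is harmless, on a $t$-block the ratio $u/t>1$ is already fatal. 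Moreover, a two-sided bound is genuinely unavailable: the map is into, not onto, $B$, so you cannot ``swap $x\leftrightarrow y$''; and the fallback claim that the $u$-regular set $f(A)\subset B$ squeezes $\alpha_B$ toward $u$ is false --- by homogeneity (\ref{cc}) the quantity $\nu(B(y,r))$ is comparable for \emph{all} $y\in B$, so $\nu$ near $f(A)$ is governed by the whole of $B$ and need not resemble the $u$-regular measure on $f(A)$. Declaring this unnecessary and unprovable step to be ``where the real work lies'' is the main defect of the proposal.

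The second problem is the construction itself. You conflate the cumulative exponent $\alpha_B(r)$ with the local increment $\bigl(\alpha_B(r)\log r-\alpha_B(r')\log r'\bigr)/(\log r-\log r')$, which is the quantity that actually appears in Lemma \ref{L:blip}. If $\alpha_B$ itself dips to $t$ along a sequence of scales --- as it would if, as you propose, each $t$-run is ``much longer'' than everything preceding it --- then $\dim_H B=\liminf_{r\to0}\alpha_B(r)\le t$ by Proposition \ref{P:Prop of Homo}, and the range $(t,\dim_H B)$ in the statement is empty. What is needed, and what the paper does, is blocks that are long in absolute terms but sparse relative to their position: take $c_k\equiv 1/6$, $n_k=3$ for $k\in[m^3+1,m^3+m]$ and $n_k=5$ otherwise. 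Then $\alpha_B(r)\to\log5/\log6$ for every sequence $r\to0$, so $\dim_H B=\dim_P B=\log5/\log6>t:=\log3/\log6$, while over the $m$-th block the $\nu$-measure drops by $3^m$ against a scale ratio of $6^m$, a local exponent of exactly $t$. Feeding $r=(1/6)^{m^3}/2$ and $r'=(1/6)^{m^3+m}/2$ into (\ref{E:unex}) gives $\xi\,6^{mu}\le C\,3^{m}$ for a $u$-regular $A$, which forces $u\le t$ and completes the proof with no further input.
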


\begin{remark}
Compare this proposition with the result in \cite{Deng juan}$:$ Let $A$ and
$B$ be self-similar sets with $\dim_{H}A<\dim_{H}B$ and if $A$ satisfies
\textbf{SSC}, then $A\hookrightarrow B.$
\end{remark}

Now, we give the main result on the bilipschitz embedding.

\begin{theorem}
\label{T:A emb B}Suppose $A,B$ are homogeneous sets and $\alpha_{A}%
(r)\sim\alpha_{A}(x^{\ast},r)$ and $\alpha_{B}(r)\sim\alpha_{B}(y^{\ast},r)$
for some $x^{\ast}\in A$ and $y^{\ast}\in B.$ If%
\begin{equation}
\sup\limits_{r^{\prime}<r_{0}r<r<r_{0}}\left\vert \frac{\alpha_{A}(r)\log
r-\alpha_{A}(r^{\prime})\log r^{\prime}}{\alpha_{B}(r)\log r-\alpha
_{B}(r^{\prime})\log r^{\prime}}\right\vert <1 \label{rar}%
\end{equation}
for some $r_{0}<1,$ then for any $\varepsilon>0,$ there exists a homogeneous
subset $A_{\varepsilon}\subset A$ such that \emph{d}$_{H}(A_{\varepsilon
},A)<\varepsilon$, $\chi(A_{\varepsilon},A)<\varepsilon\ $and $A_{\varepsilon
}$ can be bilipschitz embedded into $B.$ Further, if $A$ is uniformly
disconnected and $(\ref{rar})$ holds, then $A\hookrightarrow B.$
\end{theorem}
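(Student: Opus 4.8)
The plan is to build the homogeneous subset $A_\varepsilon\subset A$ as a Moran-type subset of $A$ whose ``combinatorial profile'' $\alpha_{A_\varepsilon}(r)$ is close to $\alpha_A(r)$, and then to embed $A_\varepsilon$ into $B$ level by level. First I would fix a small scale $r_0<1$ for which (\ref{rar}) holds, say with the supremum bounded by $q<1$, and choose a fast-decreasing sequence of scales $r_0>\rho_1>\rho_2>\cdots$ (geometric-like, $\rho_{k+1}\le r_0\rho_k$). At the $k$-th stage one uses the doubling/uniform perfectness conditions (\ref{cc}) and (\ref{***}) on $A$ to find, inside each surviving ``cell'' of $A$ of diameter $\approx\rho_{k}$, a family of roughly $N(A,\rho_{k+1}/\rho_k)$ well-separated sub-cells of diameter $\approx\rho_{k+1}$; the separation is where uniform disconnectedness of $A$ (when assumed) or simply passing to a sufficiently sparse sub-family (in the general case) is used to guarantee a definite gap between distinct sub-cells. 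Intersecting these nested families over all $k$ produces a Moran-type set $A_\varepsilon$; that it is homogeneous, and that $\alpha_{A_\varepsilon}(r)$ can be forced into the $\chi$-ball of radius $\varepsilon$ around $\alpha_A(r)$ (and $\mathrm{d}_H(A_\varepsilon,A)<\varepsilon$), follows by controlling the local branching numbers just as in the proof that Moran sets are homogeneous (Proposition \ref{P:Mor is Homo}) together with Proposition \ref{p:mei}.

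The heart of the argument is the embedding $A_\varepsilon\hookrightarrow B$. I would construct the bilipschitz map $f$ inductively on the tree of cells: having mapped the stage-$k$ cells of $A_\varepsilon$ bijectively onto a well-chosen family of stage-$k$ cells of $B$ with comparable diameters, I distribute the children of a given $A_\varepsilon$-cell among the children (in $B$) of its image. This is possible precisely because condition (\ref{rar}) says that, at every pair of consecutive scales, $A$ branches ``less'' than $B$ in the logarithmic-growth sense; quantitatively, $\alpha_A(r)\log r-\alpha_A(r')\log r'$ is the log of the number of $A$-cells of size $r'$ inside one of size $r$ (up to bounded multiplicative error, by part (2) of Proposition \ref{P:Prop of Homo}), and (\ref{rar}) forces this number to be at most the corresponding count for $B$. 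So at each step there is room to place the $A_\varepsilon$-children injectively into the $B$-children. One then checks the bilipschitz inequality: the ratio of diameters of corresponding cells stays in a fixed compact subinterval of $(0,\infty)$ because the cumulative product of stagewise size-ratios for $A_\varepsilon$ and for $B$ stay comparable — this is exactly the content of the second displayed inequality in Lemma \ref{L:blip}, which bounds the relevant ratio by $1+\varepsilon(r_0)$, hence by a constant. Points in distinct stage-$k$ cells are then separated from below and above by comparable multiples of $\rho_k$ (using uniform perfectness of $B$ for the lower bound on the image side and disconnectedness/sparsity for the upper bound), giving the two-sided Lipschitz estimate.

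For the final assertion — if $A$ is uniformly disconnected then $A$ itself (not just $A_\varepsilon$) embeds — I would not pass to a subset at all: uniform disconnectedness means $A$ decomposes at every scale into uniformly separated pieces, so $A$ is already a Moran-type set with branching numbers $N(A,\cdot)$, and the same level-by-level construction applies directly with $A$ in place of $A_\varepsilon$; condition (\ref{rar}) again supplies the room at each stage and the uniform separation turns the combinatorial map into a genuine bilipschitz embedding. The main obstacle I anticipate is the bookkeeping in the general (not uniformly disconnected) case: the sub-cells of $A$ one selects need not be disjoint as sets, only to have disjoint interiors, so to get a clean upper bound $\mathrm{d}(x_1,x_2)\lesssim\rho_k$ fails and a lower separation bound is delicate; the fix is to thin the selected family at each stage to a sub-family that is genuinely $\rho_k$-separated while retaining enough of them to keep $\alpha_{A_\varepsilon}(r)$ within $\varepsilon$ of $\alpha_A(r)$ — balancing ``separated enough'' against ``numerous enough'' is the technical crux, and it is made possible by the fact that (\ref{rar}) is a strict inequality, leaving a fixed multiplicative slack $q<1$ to absorb the loss from thinning.
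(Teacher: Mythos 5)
Your proposal follows essentially the same route as the paper's proof: construct $A_{\varepsilon}$ as a nested packing of balls of $A$ at scales $\eta^{k}$ (bilipschitz to a Moran set, exactly the construction of Theorem \ref{T:E Emb A}), use the strict inequality in (\ref{rar}) to guarantee that each ball of $B$ at scale $\eta^{k-1}$ admits at least as many disjoint $\eta^{k}$-balls as the branching number $n_{k}$ of $A_{\varepsilon}$, map the cell tree of $A_{\varepsilon}$ injectively onto such a family in $B$, and, when $A$ is uniformly disconnected, decompose all of $A$ via Lemma \ref{L:Uni Dis} and run the same argument with $A$ in place of $A_{\varepsilon}$. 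One caution: your appeal to Lemma \ref{L:blip} to control the ratio of diameters of corresponding cells is backwards --- that lemma is a \emph{necessary} condition deduced from an already existing embedding, and its bound $1+\varepsilon(r_{0})$ points in the wrong direction to supply any room; in the paper the diameter comparability is automatic because the \emph{same} scale sequence $\eta^{k}$ indexes the cells on both sides (so corresponding cells have diameter $\asymp\eta^{k}$ by construction), and all the slack needed for the child-counting comes from the strict constant $q<1$ in (\ref{rar}) absorbing the bounded multiplicative errors coming from (\ref{cc}) and (\ref{***}) --- which is also what disposes of the ``thinning'' worry you raise at the end.
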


\begin{remark}
\label{R:re}If $A,B$ are Ahlfors--David regular with $\dim_{H}A<\dim_{H}B,$
taking $\alpha_{A}(r)\equiv\dim_{H}A$ and $\alpha_{B}(r)\equiv\dim_{H}B,\ $we
obtain $(\ref{rar}).$
\end{remark}

\begin{remark}
Here $A_{\varepsilon}$ is bilipschitz equivalent to a Moran set. Compared with
\cite{Mattila Saarenen}, we use Moran sets to replace self-similar sets.
\end{remark}

Here we say that a compact subset $A$ of a metric space is \emph{uniformly
disconnected} \cite{David Semmes}, if there are constants $C>1$ and $r^{\ast
}>0$ so that for any $x\in A$ and $r<r^{\ast}$, there exists a set $E\subset
A$ satisfying
\begin{equation}
A\cap B(x,r)\subset E\subset B(x,Cr)\text{ and }\mathrm{d}(E,A\backslash E)>r.
\label{E:ud}%
\end{equation}
Any self-similar set satisfying \textbf{SSC} is uniformly disconnected.
Sometimes, we can use the uniform disconnectedness to replace \textbf{SSC}.

\begin{lemma}
\label{L:<1 Uni dis}Suppose $A$ is a homogeneous set. If
\[
\sup\limits_{r^{\prime}<r_{0}r<r<r_{0}}\left\vert \frac{\alpha_{A}(r)\log
r-\alpha_{A}(r^{\prime})\log r^{\prime}}{\log r-\log r^{\prime}}\right\vert
<1
\]
for some $r_{0}<1,$ then $A$ is uniformly disconnected.
\end{lemma}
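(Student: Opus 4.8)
The plan is to read the hypothesis as a uniform gap between the metric scale $r$ and the measure scale $\mu(B(x,r))$, and then exploit that gap combinatorially to separate $A\cap B(x,r)$ from the rest of $A$ at a controlled distance. First I would fix the point $x^{\ast}\in A$ for which $\alpha_{A}(r)\sim\alpha_{A}(x^{\ast},r)$, write $m(r)=\mu(B(x^{\ast},r))$, and record that $\alpha_{A}(r)\log r - \alpha_{A}(r')\log r' = \log m(r) - \log m(r') + O(|\log r|^{-1}) + O(|\log r'|^{-1})$, using the definition (\ref{E:equ0}) and the equivalence relation $\sim$ of (\ref{E:equ}). Hence the hypothesis says that there exist $r_{0}<1$ and $\theta<1$ such that whenever $r'<r_{0}r<r<r_{0}$,
\[
\log\frac{m(r)}{m(r')}\le \theta\,\log\frac{r}{r'} + c_{0}
\]
for an absolute constant $c_{0}$ coming from the error terms; equivalently $m(r')/m(r)\ge e^{-c_0}(r'/r)^{\theta}$. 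Combined with (\ref{cc}) this transfers to every $y\in A$: there is $C_{1}\ge1$ with $\mu(B(y,r'))\ge C_{1}^{-1}(r'/r)^{\theta}\mu(B(y,r))$ for all such pairs and all $y\in A$. This is the key quantitative inequality; the intuition is that the measure shrinks \emph{strictly slower} than any power matching the metric, so balls of comparable $\mu$-mass have very different radii, which forces spatial separation.

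Next I would turn this into disconnectedness. Fix $x\in A$ and a small radius $r<r^{\ast}$ (to be chosen below $r_{0}$, with $r^{\ast}$ depending only on $A$). Consider the radii $r$ and $\rho=r_{0}r$. I want a set $E$ with $A\cap B(x,r)\subset E\subset B(x,Cr)$ and $\mathrm{d}(E,A\setminus E)>r$; the natural candidate is a union of the pieces of a Moran-type covering of $A$ at an intermediate scale. Concretely, using Proposition \ref{P:Prop of Homo}(2), the number of $\rho$-balls needed to cover $B(x,Cr)\cap A$ is at most $N(A,\rho)/N(A,Cr)$ up to the homogeneity constant, which by the inequality above is at most $C_{2}(\rho/(Cr))^{-\theta}=C_{2}(C/r_{0})^{\theta}$ — a \emph{bounded} number, independent of $r$. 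So $A\cap B(x,Cr)$ is covered by boundedly many balls of radius $\rho=r_{0}r$. Now define an equivalence relation on these small balls by chaining: two of them are linked if they are within distance $r$, and let $E$ be the union (intersected with $A$) of the chain-component containing $x$. By construction $\mathrm{d}(E,A\setminus E)>r$ automatically, and $A\cap B(x,r)\subset E$ since any point of $A$ within $r$ of $x$ lies in a small ball linked to $x$'s. The content is that $E$ does not escape $B(x,Cr)$: a chain of $k$ small balls spans distance at most $k(r+2\rho)\le k(1+2r_{0})r$, and $k$ is bounded by the covering count above, so choosing $C$ larger than that bound times $(1+2r_{0})$ confines $E$. (If a chain already exhausts all small balls meeting $B(x,Cr)$, one enlarges $C$ slightly and repeats; this terminates because the total number of relevant small balls is uniformly bounded.)

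The main obstacle — and the only place real care is needed — is the bookkeeping that makes the covering count genuinely \emph{uniform} in $r$: one must check that the passage from $\alpha_{A}(r)$ back to $N(A,r)$ via (\ref{behavior}) introduces only the $O(|\log r|^{-1})$ errors already absorbed into $c_{0}$, and that these errors do not degrade $\theta<1$ into something $\ge1$ once $r$ is small enough; this is exactly why $r^{\ast}$ must be taken small depending on the gap $1-\theta$. A secondary subtlety is ensuring the chain-components are separated by \emph{more} than $r$ rather than $\ge r$ — handled by running the chaining at threshold $\kappa_{A}r$ for a fixed $\kappa_{A}<1$, or simply by replacing $r$ with $r/2$ at the start, which only changes $C$. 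Once the uniform bound on the number of small balls is in hand, everything else is the routine chaining argument above, and the constants $C$ and $r^{\ast}$ depend only on $r_{0}$, $\theta$, and the homogeneity data $\lambda_{A},\kappa_{A},\delta_{A},\Delta_{A}$ of $A$.
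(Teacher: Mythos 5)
Your argument is correct, but it takes a genuinely different route from the paper's. The paper follows the Mattila--Saaranen annulus argument: for fixed $x\in A$ and small $r$ it considers the $l$ concentric annuli $B(x,(i+1)r)\setminus B(x,ir)$, $1\leq i\leq l$, and observes that if every one of them met $A$, then picking a point $x_{i}$ in each and noting that $B(x_{i},r)$ lies in three consecutive annuli would give $l\,\underline{\mu}(r)\leq 3\,\overline{\mu}((l+2)r)$; for $l$ large this contradicts the hypothesis, rewritten (exactly as you rewrite it) as $\mu(B(y,R))/\mu(B(y,r))\lesssim (R/r)^{\theta}$ with $\theta<1$. The empty annulus then immediately supplies the separating set $E=A\cap B(x,i_{0}r)$ with $C=l$. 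Your route instead covers $A\cap B(x,Cr)$ by $N\lesssim(C/r_{0})^{\theta}$ balls of radius $r_{0}r$ and chains them at threshold $r$, so the component of $x$ has diameter $\lesssim N(1+2r_{0})r\ll Cr$ and is confined; this is sound, and the covering--packing comparison it needs is available from the proofs of Proposition \ref{P:Prop of Homo} and Lemma \ref{L:blip}. The paper's version is shorter because the annulus count produces the contradiction in one line and hands you $E$ for free. One point you should state explicitly rather than leave in a parenthetical: since $N$ depends on $C$, ``choosing $C$ larger than that bound times $(1+2r_{0})$'' is a fixed-point condition of the form $C>K(C/r_{0})^{\theta}(1+2r_{0})+O(1)$, solvable precisely because $\theta<1$ (for $\theta=1$ it fails, as it must, since $[0,1]$ satisfies every other ingredient of your argument). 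That inequality is the one and only place your proof uses the strict bound in the hypothesis, so it deserves a sentence; with it, your argument is complete.
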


\begin{remark}
\label{R:test}For Ahlfors--David $s$-regular set $A$\ with $s<1,$ we obtain
its uniform disconnectedness by taking $\alpha_{A}(r)\equiv s.$ This is a
result of \cite{Mattila Saarenen}. However, using a Moran set one can find a
homogeneous set $A$ with $\dim_{H}A=\dim_{P}A<1$ but which is not uniformly
disconnected $($see Example $\ref{Ex:ud}$ in Section $5).$
\end{remark}

For any $s\in(0,\infty),$ there exists a self-similar set $E$ with $\dim
_{H}E=s$ in Euclidean space satisfying \textbf{SSC}. Since $E$ is
Ahlfors--David regular and uniformly disconnected, applying Theorem
\ref{T:A emb B} and Lemma \ref{L:<1 Uni dis} to Ahlfors--David regular sets
(Remarks \ref{R:re} and \ref{R:test}), one can get a result of Mattila and
Saaranen \cite{Mattila Saarenen}.

\subsection{Equivalence theorem}

\ \

Classifying fractals under bilipschitz equivalence is an important topic in
geometric measure theory.

Bilipschitz mappings preserve the geometric properties, such as fractal
dimensions, Ahlfors--David regularity and uniform disconnectedness. Many works
have been devoted to the bilipschitz equivalence of fractals, please refer to
\cite{Falconer Marsh}, \cite{David Semmes}, \cite{Xi conformal}, \cite{RRX},
\cite{Xi Xiong} and \cite{LM}. But even for self-similar sets, Falconer and
Marsh \cite{Falconer Marsh} pointed out that there are two self-similar sets
satisfying \textbf{SSC} with the same Hausdorff dimension but which are not
bilipschitz equivalent.

Corresponding to bilipschitz equivalence, a weaker notion of quasi-Lipschitz
equivalence was introduced in \cite{Xi quasi}. Under quasi-Lipschitz mapping,
information of fractals is preserved in some sense, for example, the fractal
dimensions, quasi Ahlfors--David regularity, quasi uniform disconnectedness;
see e.g. \cite{Wang Xi S} and \cite{Xi quasi}.

\begin{definition}
\label{D:quasi lip}Two compact metric spaces $(X,\mathrm{d}_{X})$ and
$(Y,\mathrm{d}_{Y})$ are said to be quasi-Lipschitz equivalent, if there is a
bijection $f:X\rightarrow Y$ such that for all $x_{1},x_{2}\in X,$%
\[
\frac{\log\mathrm{d}_{Y}(f(x_{1}),f(x_{2}))}{\log\mathrm{d}_{X}(x_{1},x_{2}%
)}\rightarrow1\text{ uniformly as }\mathrm{d}_{X}(x_{1},x_{2})\rightarrow0.
\]

\end{definition}

If we turn to quasi-Lipschitz equivalence, we can say more about the
equivalence of homogeneous sets.

\begin{theorem}
\label{T:quasi-lip}Suppose homogeneous sets $A,B$ are uniformly
disconnected$.$ Then they are quasi-Lipschitz equivalent if and only if
$\chi(A,B)=0,$ i.e., $\lim_{r\rightarrow0}\frac{\alpha_{A}(r)}{\alpha_{B}%
(r)}=1.$
\end{theorem}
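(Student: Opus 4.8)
The plan is to prove both directions using the structural description of homogeneous uniformly disconnected sets via Moran sets. First I would establish the easy direction: if $A$ and $B$ are quasi-Lipschitz equivalent via a bijection $f$, then for each pair $x\in A$, $y=f(x)$, the relation $\log\mathrm{d}(f(x_1),f(x_2))/\log\mathrm{d}(x_1,x_2)\to1$ uniformly forces, through the covering characterization $\alpha_A(x,r)\sim f_A(r)=\log N(A,r)/(-\log r)$ from Proposition \ref{P:Prop of Homo}(2), that $N(A,r)$ and $N(B,r)$ have the same exponential growth rate up to the $\sim$ equivalence; more precisely, a quasi-Lipschitz bijection sends an $r$-cover of $A$ to an $r^{1+o(1)}$-cover of $B$, so $f_A(r)\sim f_B(r^{1+o(1)})\sim f_B(r)$ after absorbing the $o(1)$ distortion into the $O(|\log r|^{-1})$ error. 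Hence $\alpha_A(r)\sim\alpha_B(r)$, which is exactly $\chi(A,B)=0$.

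For the converse — the substantive direction — suppose $\chi(A,B)=0$. The strategy is to reduce both $A$ and $B$ to Moran sets and build the map symbolically. Since $A$ is homogeneous and uniformly disconnected, I would first use the embedding/approximation machinery (Theorem \ref{T:E Emb A} and the uniform disconnectedness) to replace $A$ by a bilipschitz-equivalent Moran set $E_A\in\mathcal M(J_A,\{n_k^A\},\{c_k^A\})$, and similarly $B$ by $E_B\in\mathcal M(J_B,\{m_k^B\},\{\rho_k^B\})$; bilipschitz maps are in particular quasi-Lipschitz, and quasi-Lipschitz equivalence is transitive, so it suffices to show $E_A$ and $E_B$ are quasi-Lipschitz equivalent. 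By Proposition \ref{P:Mor is Homo}, $\alpha_{E_A}(r)$ is the step function $\log(n_1^A\cdots n_k^A)/(-\log c_1^A\cdots c_k^A)$ on the appropriate scale, and likewise for $E_B$; the hypothesis $\chi(E_A,E_B)=0$ says these two step profiles are asymptotically equal. I would then construct a bijection $f\colon E_A\to E_B$ by matching cylinders: choose an increasing sequence of levels $k_1<k_2<\cdots$ in the $E_A$-tree and levels $\ell_1<\ell_2<\cdots$ in the $E_B$-tree so that the number of level-$k_j$ cylinders of $E_A$ equals the number of level-$\ell_j$ cylinders of $E_B$ (possible after passing to a common refinement and using that $\chi=0$ makes the cardinalities match at comparable scales), fix an arbitrary bijection between these cylinder sets compatible with the tree order, and let $f$ be the induced limit map. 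Uniform disconnectedness guarantees that distances within $E_A$ and $E_B$ are comparable to diameters of the smallest common cylinder containing the two points (the separation between distinct cylinders is bounded below by a constant times the parent cylinder's diameter), so $\log\mathrm{d}_{E_A}(x_1,x_2)$ is, up to an additive $O(1)$, $\log(c_1^A\cdots c_{k}^A)$ where $k$ is the splitting level; the analogous statement holds in $E_B$ with splitting level $\ell$. The ratio of these logs is then controlled by $\alpha_{E_A}$ and $\alpha_{E_B}$ at matched scales, and $\chi(E_A,E_B)=0$ forces it to tend to $1$ uniformly, giving the quasi-Lipschitz property.

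The main obstacle I anticipate is the bookkeeping in the cylinder-matching step: one must choose the level sequences $\{k_j\}$ and $\{\ell_j\}$ so that the cylinder counts agree exactly at each stage while simultaneously controlling \emph{both} the diameter ratios (so that $\log|$cylinder$|$ of $E_A$ over $\log|$cylinder$|$ of $E_B\to1$) and keeping consecutive chosen levels close enough that a point's splitting level in the original trees is sandwiched between two chosen levels with negligible loss. Because $c_*>0$ and $\rho_*>0$ bound the ratios away from $0$, moving one level changes $\log(\text{diameter})$ by a bounded amount, so the sandwiching costs only $O(1)$ in the logarithms — which is exactly the slack the quasi-Lipschitz definition allows since $|\log\mathrm{d}|\to\infty$. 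Making the count-matching and the scale-matching coexist is where I would spend the most care; I would likely do it greedily, at each stage advancing whichever tree has fewer accumulated cylinders until the counts cross, then using an exact bijection onto the larger set by further subdividing one more level if needed, and verifying via $\chi(A,B)=0$ that the two scales stay comparable throughout. Everything else — transitivity, the reduction to Moran sets, and the distance-vs-diameter estimate under uniform disconnectedness — is routine given the results already established in the paper.
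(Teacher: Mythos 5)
Your necessity argument is essentially the paper's: a quasi-Lipschitz bijection distorts radii only by $r\mapsto r^{1\pm\beta(r)}$, so the covering numbers of $A$ and $B$ have matching profiles and Proposition \ref{P:Prop of Homo}(2) gives $\chi(A,B)=0$. That half is fine.

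The sufficiency direction has two genuine gaps. First, the reduction ``replace $A$ by a bilipschitz-equivalent Moran set'' is not available: Theorem \ref{T:E Emb A} only produces a Moran set whose bilipschitz image is an $\varepsilon$-dense \emph{subset} $A(\eta)\subset A$, not all of $A$, and a homogeneous uniformly disconnected set need not be bilipschitz equivalent to a Moran set in the class $\mathcal{M}(J,\{n_k\},\{c_k\})$: the tree structure produced by uniform disconnectedness has branching numbers depending on the word, not just on the level. The paper avoids this entirely by working directly with the decomposition of $A$ itself furnished by Lemma \ref{L:Uni Dis}.

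Second, and more seriously, the cylinder-count matching cannot in general be done exactly (take $n_k\equiv2$ for $E_A$ and $m_k\equiv3$ for $E_B$: the level counts are powers of $2$ and of $3$ and never coincide), so any matching is unbalanced and incurs a bounded multiplicative error in the counts at every level. Your remark that ``the sandwiching costs only $O(1)$ in the logarithms'' covers a single adjustment, but these errors accumulate: over $k$ levels they contribute $O(k)$ to the logarithm of the count, which is \emph{comparable} to $\log(n_1\cdots n_k)$ itself when the branching numbers are bounded, so the relative error does not tend to $0$ and the quasi-Lipschitz estimate fails to close. The paper's device is to decompose $A$ (and $B$) at the accelerating scales $\eta^{k^2}$, so that the per-level branching numbers grow like $e^{ck}$ and the per-level $O(1)$ errors (from rounding $m_{i_1\cdots i_{k-1}}$ to dyadic cylinders of length $p_{i_1\cdots i_{k-1}}$ or $p_{i_1\cdots i_{k-1}}+1$, and from the constants $C$ and $2$) sum to $O(k)$ against a main term of order $k^2$; it then routes both sets through the common symbolic space $\{0,1\}^{\mathbb{N}}$, on which the symbolic metric records $\log\mu(B(x,\eta^{k^2}))$ up to a factor $1+o(1)$, and the hypothesis $\chi(A,B)=0$ is precisely what makes the two symbolic codings compatible via Lemma \ref{L:r/r=1}. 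Without the accelerating scales, or some equivalent error-killing device, your greedy count-matching does not yield the uniform convergence required by Definition \ref{D:quasi lip}.
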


If $A$ and $B$ are Ahlfors--David $s$-regular and $t$-regular respectively, we
note that $\chi(A,B)=0$ if and only if $s=t.$ Using Theorem \ref{T:quasi-lip},
we get the main results of \cite{Wang Xi N}: Suppose that $A$ and $B$ are
Ahlfors--David $s$-regular and $t$-regular respectively, and that they are
uniformly disconnected; then they are quasi Lipschitz equivalent if and only
if they have the same Hausdorff dimension, i.e., $s=t$. In particular, the
assumption $s,t<1\ $impilies their uniform disconnectedness (see \cite{Mattila
Saarenen} or Remark \ref{R:test}). Then we also get the result of \cite{Xi
quasi}: Two self-conformal sets satisfying {\textbf{SSC}} are quasi-Lipschitz
equivalent if and only if they have the same Hausdorff dimension. For example,
the self-similar sets in Example 1 are quasi-Lipschitz equivalent.

\begin{example}
The Cantor ternary set and the self-similar set  $E=(rE)\cup(rE+\frac{1}%
{2}-\frac{r}{2})\cup(rE+1-r)$ with $r=3^{-\log3/\log2}$ are quasi-Lipschitz
equivalent, although they are not bilipschitz equivalent as shown in
\cite{Falconer Marsh} by Falconer and Marsh.
\end{example}

\subsection{Results on Moran sets}

$\ $

For a Moran class $\mathcal{A}=\mathcal{M}(J,\{n_{k}\}_{k\geq1},\{c_{k}
\}_{k\geq1}),$ supposing
\[
r\in(r_{k}|J|,r_{k-1}|J|],r^{\prime}\in(r_{k^{\prime}}|J|,r_{k^{\prime}%
-1}|J|]\text{ with }k\leq k^{\prime},
\]
where $r_{k}=c_{1}\cdots c_{k},$ we let
\[
\Phi_{\mathcal{A}}(r)=n_{1}\cdots n_{k}\text{ and }\Phi_{\mathcal{A}%
}(r,r^{\prime})=\Phi_{\mathcal{A}}(r^{\prime})/\Phi_{\mathcal{A}}%
(r)=n_{k+1}\cdots n_{k^{\prime}}.
\]

Applying Proposition \ref{P:Mor is Homo} to Theorems \ref{T:A emb B}%
--\ref{T:quasi-lip} and Lemma \ref{L:<1 Uni dis}, we have

\begin{corollary}
\label{C:Moran enbed}Let $\mathcal{A}=\mathcal{M}(J,\{n_{k}\}_{k}%
,\{c_{k}\}_{k})$ and $\mathcal{B}=\mathcal{M}(I,\{m_{k}\}_{k},\{d_{k}\}_{k}).$
If
\[
\limsup\limits_{k\rightarrow\infty}\frac{\log n_{k+1}\cdots n_{k+k_{0}}}{-\log
c_{k+1}\cdots c_{k+k_{0}}}<1
\]
for some $k_{0}\geq1,$ then any $E\in\mathcal{A}$ is uniformly disconnected.
If $E\in\mathcal{A}$ is uniformly disconnected and%
\[
\sup_{r^{\prime}<r_{0}r<r<r_{0}}\frac{\log\Phi_{\mathcal{A}}(r,r^{\prime}%
)}{\log\Phi_{\mathcal{B}}(r,r^{\prime})}<1
\]
for some $r_{0}<1,$ then $E\hookrightarrow F$ for any $F\in\mathcal{B}.$ If
$E\in\mathcal{A}$ and $F\in\mathcal{B}$ are uniformly disconnected, then $E$
and $F$ are quasi-Lipschitz equivalent if and only if
\[
\lim\limits_{r\rightarrow0}\frac{\log\Phi_{\mathcal{A}}(r)}{\log
\Phi_{\mathcal{B}}(r)}=1.
\]

\end{corollary}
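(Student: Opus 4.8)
The plan is to reduce the three assertions, via Proposition \ref{P:Mor is Homo}, to Lemma \ref{L:<1 Uni dis}, Theorem \ref{T:A emb B} and Theorem \ref{T:quasi-lip} respectively, by rewriting the quantities $\alpha_E(r)\log r$ and $\alpha_E(r)$ in terms of the combinatorial functions $\Phi_{\mathcal{A}},\Phi_{\mathcal{B}}$. Write $r_k=c_1\cdots c_k$ and $\tilde r_l=d_1\cdots d_l$. I would first record two elementary facts. (i) Under $(\ref{c_*})$ the branching numbers are bounded: comparing $d$-dimensional volumes of the $n_k$ (resp. $m_k$) disjoint subcopies of ratio $c_k$ (resp. $d_k$) sitting inside $J$ (resp. $I$) gives $n_k\le c_*^{-d}$ and $m_k\le c_*^{-d}$. (ii) If $r\in(r_k|J|,r_{k-1}|J|]$, then $-\log r=-\log r_k+O(1)$ with the $O(1)$ controlled by $|\log|J||$ and $-\log c_*$, and similarly for $F$, $\mathcal{B}$ and $\tilde r_l$. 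Since $\alpha_E(r)=\log\Phi_{\mathcal{A}}(r)/(-\log r_k)$ by Proposition \ref{P:Mor is Homo} and $\alpha_E$ is bounded by Proposition \ref{P:Prop of Homo}, fact (ii) yields the uniform estimates
\[
\alpha_E(r)\log r=-\log\Phi_{\mathcal{A}}(r)+O(1),\qquad \alpha_E(r)\log r-\alpha_E(r')\log r'=\log\Phi_{\mathcal{A}}(r,r')+O(1),
\]
together with $\log r-\log r'=-\log(c_{k+1}\cdots c_{k'})+O(1)$ when $r,r'$ lie at levels $k,k'$; the same formulas hold for $F$ and $\mathcal{B}$.

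For the uniform disconnectedness of $E\in\mathcal{A}$ I would apply Lemma \ref{L:<1 Uni dis}. Fix $\varepsilon>0$ with $\rho_0+\varepsilon<1$, where $\rho_0$ is the $\limsup$ in the hypothesis, and take $r'<r_0r<r<r_0$, with $r$ at level $k$ and $r'$ at level $k'$. The constraint $r'<r_0r$ forces $c_k\cdots c_{k'}<r_0$, whence both $k$ and $k'-k$ tend to $\infty$ (uniformly) as $r_0\downarrow 0$, and also $-\log(c_{k+1}\cdots c_{k'})\to\infty$. Decomposing $\{k+1,\dots,k'\}$ into consecutive blocks of length $k_0$ plus a remainder of length $<k_0$, applying the $\limsup$ bound on each full block (legitimate once $k$ is large), and bounding the remainder's contribution to $\log(n_{k+1}\cdots n_{k'})$ by $(k_0-1)\log(c_*^{-d})$ via fact (i), I obtain
\[
\log(n_{k+1}\cdots n_{k'})\le(\rho_0+\varepsilon)\bigl(-\log(c_{k+1}\cdots c_{k'})\bigr)+O(1).
\]
Dividing by $\log r-\log r'=-\log(c_{k+1}\cdots c_{k'})+O(1)$ and using the identity above for the numerator, the quantity in Lemma \ref{L:<1 Uni dis} is at most $(\rho_0+\varepsilon)(1+o(1))<1$ once $r_0$ is small enough, which is the desired conclusion.

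For the embedding statement, assume $E\in\mathcal{A}$ is uniformly disconnected and $\rho:=\sup_{r'<r_0r<r<r_0}\frac{\log\Phi_{\mathcal{A}}(r,r')}{\log\Phi_{\mathcal{B}}(r,r')}<1$. Passing to a smaller $r_1\le r_0$ does not increase this supremum over the smaller range, while forcing $\log\Phi_{\mathcal{A}}(r,r')$ and $\log\Phi_{\mathcal{B}}(r,r')$ to $\infty$ (each is $\ge(k'-k)\log 2$, resp. $\ge(l'-l)\log 2$, with the level gaps growing by the argument of the previous paragraph applied to $c_k$ and to $d_k$). Then, by the displayed identity for $E$ and its analogue for $F$,
\[
\left|\frac{\alpha_E(r)\log r-\alpha_E(r')\log r'}{\alpha_F(r)\log r-\alpha_F(r')\log r'}\right|\le\frac{\rho\,\log\Phi_{\mathcal{B}}(r,r')+O(1)}{\log\Phi_{\mathcal{B}}(r,r')-O(1)}\longrightarrow\rho<1
\]
uniformly as $r_1\downarrow 0$, so condition $(\ref{rar})$ holds with $B,r_0$ replaced by $F,r_1$ (the requirement $\alpha_E(r)\sim\alpha_E(x^{\ast},r)$ being automatic for Moran sets by Proposition \ref{P:Mor is Homo}). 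Theorem \ref{T:A emb B} in the uniformly disconnected case then yields $E\hookrightarrow F$.

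Finally, for the quasi-Lipschitz statement, write $\alpha_E(r)=\log\Phi_{\mathcal{A}}(r)/(-\log r_k)$ and $\alpha_F(r)=\log\Phi_{\mathcal{B}}(r)/(-\log\tilde r_l)$, where by fact (ii) $-\log r_k=-\log r+O(1)$ and $-\log\tilde r_l=-\log r+O(1)$. Then $\frac{\alpha_E(r)}{\alpha_F(r)}=\frac{\log\Phi_{\mathcal{A}}(r)}{\log\Phi_{\mathcal{B}}(r)}\cdot\frac{-\log\tilde r_l}{-\log r_k}$, and since both $-\log r_k,-\log\tilde r_l$ equal $-\log r+O(1)\to\infty$, the last factor tends to $1$; hence $\chi(E,F)=0$ if and only if $\lim_{r\to 0}\frac{\log\Phi_{\mathcal{A}}(r)}{\log\Phi_{\mathcal{B}}(r)}=1$, and Theorem \ref{T:quasi-lip} completes the proof. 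The step I expect to be the main obstacle is the block decomposition in the first assertion: converting the $k_0$-block $\limsup$ hypothesis into the pairwise estimate required by Lemma \ref{L:<1 Uni dis}, while keeping every $O(1)$ genuinely uniform and using the boundedness $n_k\le c_*^{-d}$ to absorb the leftover block.
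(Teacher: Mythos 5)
Your proposal is correct and follows exactly the route the paper intends: the paper gives no detailed proof of this corollary, stating only that it follows by applying Proposition \ref{P:Mor is Homo} to Theorems \ref{T:A emb B}--\ref{T:quasi-lip} and Lemma \ref{L:<1 Uni dis}, and your argument fills in precisely those reductions, with the translation $\alpha_E(r)\log r=-\log\Phi_{\mathcal{A}}(r)+O(1)$ and the $k_0$-block decomposition (using $n_k\le c_*^{-d}$ from $(\ref{dd})$ to absorb the leftover block) handled correctly.
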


\begin{example}
Let $J=[0,1],$ $c_{k}\equiv d_{k}\equiv1/5$ and $n_{k},m_{k}\in\{2,3\}$ for
all $k.$ Denote
\begin{align*}
a_{k,k^{\prime}}  &  =\#\{i:n_{i}=3 \mathrm{~with~ } k\leq i\leq k^{\prime
}\},\\
b_{k,k^{\prime}}  &  =\#\{i:m_{i}=3 \mathrm{~with~ } k\leq i\leq k^{\prime}\}.
\end{align*}
Let $E\in\mathcal{M(}J,\{n_{k}\}_{k},\{c_{k}\}_{k}\mathcal{)},F\in
\mathcal{M(}J,\{m_{k}\}_{k},\{d_{k}\}_{k}\mathcal{)}.$ Then $E,F$ are
uniformly disconnected. It follows from Corollary $\ref{C:Moran enbed}$ that
if there exist constants $k_{0}$ and $k_{1}$ such that
\[
0\leq a_{k,k+k_{0}}<b_{k,k+k_{0}}\text{ for all } k>k_{1},
\]
then $E\hookrightarrow F.$ Let
\[
a_{k}=\#\{i:n_{i}=3\text{ with }i\leq k\}, \quad b_{k}=\#\{i:m_{i} =3\text{
with }i\leq k\}.
\]
Using Corollary $\ref{C:Moran enbed}$ again, we obtain that $E$ and $F$ are
quasi-Lipschitz equivalent if and only if
\[
\lim_{k\rightarrow\infty}\frac{a_{k}\log3+(k-a_{k})\log2}{b_{k}\log
3+(k-b_{k})\log2}=1,
\]
which is equivalent to
\[
\lim_{k\rightarrow\infty}\frac{a_{k}+k}{b_{k}+k}=1.
\]

\end{example}

\bigskip

We would mention that this paper is quite different from the previous works,
e.g. \cite{Mattila Saarenen}, \cite{Deng juan}, \cite{Xi quasi} and \cite{Wang
Xi N}. For the fractals discussed in this paper, their Hausdorff dimensions
and packing dimensions need not be the same; they are more complicated than
Ahlfors--David regular sets as in \cite{Mattila Saarenen}, \cite{Deng juan},
\cite{Xi quasi} and \cite{Wang Xi N}. We notice that the main tool of this
paper is Moran set rather than self-similar set satisfying \textbf{OSC}.

The paper is organized as follows. In Section 2, we obtain the dimensions of
homogeneous sets. In Section 3, we show that Moran sets are homogeneous, and
we also give many homogeneous sets which are not Ahlfors--David regular. In
Section 4, we approximate the homogeneous sets by Moran sets. The bilipschitz
embedding and quasi-Lipschitz equivalence of homogeneous sets are discussed in
Sections 5 and 6 respectively.

\section{Dimensions of homogeneous fractals}

In this section, we will prove Proposition \ref{P:Prop of Homo}.

For a compact subset $A$ in any metric space, let $P(A,r)$ denote the greatest
number of disjoint $r$-balls with centers in $A,$ and $N(A,r)$ the smallest
number of $r$-balls needed to cover $A.$ We have%
\begin{equation}
N(A,2r)\leq P(A,r)\leq N(A,r/2)\text{ for any }r>0, \label{E:N<M}%
\end{equation}
please refer to Section 5.3 of \cite{Mattila}.

\begin{proof}
[Proof of Proposition \ref{P:Prop of Homo}]$\ $

For any $r\leq|A|,$ assume that $(\kappa_{A})^{n}|A|<r\leq(\kappa_{A}%
)^{n-1}|A|$ $(n\geq1)$; then
\[
\mu(B(x,(\kappa_{A})^{n}|A|))\leq\mu(B(x,r))\leq\mu(B(x,(\kappa_{A}%
)^{n-1}|A|)).
\]
Using (\ref{***}), we have for any $x\in A,$
\[
\frac{\mu(A)}{(\Delta_{A})^{n}}=\frac{\mu(B(x,|A|))}{(\Delta_{A})^{n}}\leq
\mu(B(x,r))\leq\frac{\mu(B(x,|A|))}{(\delta_{A})^{n-1}}=\frac{\mu(A)}%
{(\delta_{A})^{n-1}},
\]
which implies%
\begin{equation}
\frac{\log\delta_{A}}{-\log\kappa_{A}}\leq\liminf_{r\rightarrow0}\alpha
_{A}(r)\leq\limsup_{r\rightarrow0}\alpha_{A}(r)\leq\frac{\log\Delta_{A}}%
{-\log\kappa_{A}}. \label{E:alpha 0 infty}%
\end{equation}

(1) Fix $x^{\ast}\in A.$ For any $r>0,$ by (\ref{cc}) in Definition
\ref{D:Main}, we obtain%
\[
P(A,r)\cdot\lambda_{A}^{-1}\mu(B(x^{\ast},r))\leq\mu(A)\leq N(A,r)\cdot
\lambda_{A}\mu(B(x^{\ast},r)).
\]
Then by (\ref{E:N<M}), we have
\[
\frac{\mu(A)}{\lambda_{A}\mu(B(x^{\ast},r))}\leq N(A,r)\leq\frac{\lambda
_{A}\mu(A)}{\mu(B(x^{\ast},r/2))}.
\]
It follows from (\ref{***}) that $\mu$ is doubling, i.e., $\mu(B(x^{\ast
},r/2))\geq C\mu(B(x^{\ast},r))$ for some constant $C>0.$ Therefore,
\begin{equation}
f_{A}(r)=\frac{\log N(A,r)}{-\log r}\sim\alpha_{A}(x^{\ast},r).
\label{E:f/alpha}%
\end{equation}

(2) Using definitions of dimensions (see e.g. \cite{Cutler} and
\cite{Falconer}) and (\ref{E:f/alpha}), we have
\[
\dim_{H}A\leq\underline{\dim}_{B}A=\liminf_{r\rightarrow0}\alpha_{A}(r),\text{
}\dim_{P}A\leq\overline{\dim}_{B}A=\limsup_{r\rightarrow0}\alpha_{A}(r).
\]
It suffices to show that
\[
\dim_{H}A\geq\liminf_{r\rightarrow0}\alpha_{A}(r)\text{ and }\dim_{P}%
A\geq\limsup_{r\rightarrow0}\alpha_{A}(r).
\]

For any $0<s<\liminf_{r\rightarrow0}\alpha_{A}(r),$ there exists $r_{0}%
\in(0,1),$ such that for any $x\in A$ and $r\in(0,r_{0}],$%
\[
\alpha_{A}(x,r)=\frac{\log\mu(B(x,r))}{\log r}>s.
\]
Then for any subset $U\subset X$ with $A\cap U\neq\varnothing$ and $|U|\leq
r_{0},$
\[
\mu(U)\leq\mu(B(x,|U|))\leq|U|^{s}\text{ for any }x\in A\cap U.
\]
In a standard way, we get $\dim_{H}A\geq\liminf_{r\rightarrow0}\alpha_{A}(r).$

By the Corollary $3.20$(b) of \cite{Cutler}, we have $\dim_{P}A\geq
\limsup_{r\rightarrow0}\alpha_{A}(r)$ directly.

\end{proof}

\section{ Moran sets are homogeneous}

Given a Moran set $E\in\mathcal{M}(J,\{n_{k}\}_{k},\{c_{k}\}_{k})$ in
$\mathbb{R}^{d},$ for word $\sigma=i_{1}\cdots i_{k}\in\Omega_{k}$ with length
$k,$ write $J_{\sigma}=J_{i_{1}\cdots i_{k}},$ a basic element of order $k.$
Without loss of generality, for the proof of Proposition \ref{P:Mor is Homo},
we may assume that $|J|=1.$ Let $r_{k}=c_{1}\cdots c_{k}$ for all $k$, and let
$c_{1}\cdots c_{k-1}=n_{1}\cdots n_{k-1}=1$ for $k=1$.

Let $\mathcal{L}$ denote the Lebesgue measure on $\mathbb{R}^{d}.$ Write
$\text{\textrm{int}}(\cdot)$ the interior of set. Then
\begin{equation}
\mathcal{L}(\mathrm{int}(J_{\sigma}))=(r_{k})^{d}\mathcal{L}(\mathrm{int}%
(J))\label{test}%
\end{equation}
for $\sigma\in\Omega_{k}$ since $J_{\sigma}$ is geometrically similar to $J.$

Notice that the union $\bigcup\nolimits_{i_{k}=1}^{n_{k} }\mathrm{int}%
(J_{i_{1}\cdots i_{k-1}i_{k}})\subset\mathrm{int}(J_{i_{1}\cdots i_{k-1}})$ is
disjoint for any word $i_{1}\cdots i_{k-1}\in\Omega_{k-1}$, we have
\[
\sum\nolimits_{i_{k}=1}^{n_{k}}\mathcal{L}(\mathrm{int}(J_{i_{1}\cdots
i_{k-1}i_{k}}))\leq\mathcal{L}(\mathrm{int}(J_{i_{1}\cdots i_{k-1}})).
\]
Applying (\ref{test}) to the above formula, we have%
\begin{equation}
n_{k}c_{k}^{d}\leq1. \label{zz}%
\end{equation}
Applying $c_{\ast}=\inf_{k}c_{k}>0$ and $n_{k}\geq2$ to (\ref{zz}),\ we have%
\begin{equation}
c_{\ast}\leq c^{\ast}:=\sup_{k}c_{k}\leq\frac{1}{\sqrt[d]{2}}\ \text{and\ }%
2\leq n_{k}\leq c_{\ast}^{-d}. \label{dd}%
\end{equation}

\subsection{Moran measure}

\

We are going to construct a Borel probability measure $\mu$ on $\mathbb{R}%
^{d}$ with its support $\text{supp}\mu=E$ as in \cite{Cawley Mauldin}, which
is usually called the Moran measure.

Let $\Omega^{\infty}=\prod^{\infty}_{k=1}\{1,\cdots,n_{k}\}$ be a compact
metrizable space. For $w=w_{1}w_{2}\cdots\in\Omega^{\infty}$ and $k\geq1$, let
$w|_{k}=w_{1}\cdots w_{k}\in\Omega_{k}$. For $k\geq1$ and $\sigma\in\Omega
_{k}$, let $C_{\sigma}=\{w\in\Omega^{\infty}: w|_{k}=\sigma\}$, the cylinder
set determined by $\sigma$. Then there is a unique Borel probability measure
$\nu$ on $\Omega^{\infty}$ such that $\nu(C_{\sigma})=(n_{1}\cdots n_{k}%
)^{-1}$ for all $k\geq1$ and $\sigma\in\Omega_{k}$.

By (\ref{dd}), we notice that $r_{k}\rightarrow0$ as $k\rightarrow\infty$,
that is $\lim_{k\rightarrow\infty}|J_{w|_{k}}|=0. $ Thus there is a map
$f\colon\Omega^{\infty}\rightarrow\mathbb{R}^{d}$ with $f(\Omega^{\infty})=E$
defined by
\[
\{f(w)\}=\bigcap_{k=1}^{\infty}J_{w|_{k}} \text{ for each }w\in\Omega^{\infty
};
\]
and as $f(C_{\sigma})\subset J_{\sigma}$ for each $\sigma\in\Omega^{\ast
}=\bigcup^{\infty}_{k=0}\Omega_{k}$, the map $f$ is continuous. Now there is a
Borel probability measure $\mu$ on $\mathbb{R}^{d}$ defined by $\mu
(A)=\nu(f^{-1}(A))$ for all Borel set $A\subset\mathbb{R}^{d}$. Now
\begin{equation}
\label{E:mufnu}\mu(J_{\sigma})=\nu(f^{-1}(J_{\sigma}))\geq\nu(C_{\sigma
})=(n_{1}\cdots n_{k})^{-1}%
\end{equation}
for all $k\geq1$ and $\sigma\in\Omega_{k}$. From this it easily follows that
the support of $\mu$ is $E$.

Next, we give an estimation of the Moran measure.

\begin{lemma}
\label{L:Moran}There is a constant $C_{E}>1$ such that for any $x\in E$ and
$r_{k}<r\leq r_{k-1}$,
\[
(n_{1}\cdots n_{k})^{-1}\leq\mu(B(x,r))\leq C_{E}(n_{1}\cdots n_{k-1})^{-1}.
\]

\end{lemma}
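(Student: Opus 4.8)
The plan is to estimate $\mu(B(x,r))$ from below and from above separately, exploiting the nesting structure of the basic elements $J_\sigma$ and the fact that, by the geometric-similarity hypothesis, the diameter of a basic element of order $k$ equals $r_k = c_1\cdots c_k$ (recall $|J|=1$).

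For the lower bound, fix $x\in E$ and $r_k < r \le r_{k-1}$. Choose a word $\sigma\in\Omega_k$ with $x\in J_\sigma$. Then $J_\sigma$ has diameter $r_k < r$, so $J_\sigma\subset B(x,r)$, whence by $(\ref{E:mufnu})$
\[
\mu(B(x,r)) \ge \mu(J_\sigma) \ge (n_1\cdots n_k)^{-1}.
\]
This is the easy half.

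For the upper bound, the idea is to count how many basic elements of order $k-1$ can meet $B(x,r)$, since each such element carries $\mu$-mass at most something comparable to $(n_1\cdots n_{k-1})^{-1}$ (the total mass is $1$ and is distributed, with bounded multiplicity, among the $n_1\cdots n_{k-1}$ elements of order $k-1$ — more precisely $\mu(J_\tau)\le$ const$\cdot(n_1\cdots n_{k-1})^{-1}$ will follow from the reverse of $(\ref{E:mufnu})$ together with bounded overlap of the images $f(C_\tau)$, or can be read off directly since the $J_\tau$, $\tau\in\Omega_{k-1}$, have pairwise disjoint interiors and $\mu(J_\tau)=\nu(f^{-1}(J_\tau))$ with the $f^{-1}(J_\tau)$ overlapping only on a $\nu$-null boundary set). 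The key geometric point is a bounded-overlap / packing estimate: the number of basic elements $J_\tau$ of order $k-1$ that intersect the ball $B(x,r)$ is bounded by an absolute constant $N_0$ depending only on $d$ and $c_*$. Indeed, each such $J_\tau$ contains an interior of Lebesgue measure $(r_{k-1})^d\mathcal L(\mathrm{int}J)$ by $(\ref{test})$, these interiors are pairwise disjoint, each $J_\tau$ that meets $B(x,r)$ is contained in $B(x, r + r_{k-1}) \subset B(x, (1 + c_*^{-1})r_{k-1})$ (using $r\le r_{k-1}$), and a volume comparison inside this ball caps their number by a constant. Summing $\mu(J_\tau)\le$ const$\cdot(n_1\cdots n_{k-1})^{-1}$ over these at most $N_0$ elements gives $\mu(B(x,r))\le C_E(n_1\cdots n_{k-1})^{-1}$ with $C_E$ absolute, as desired.

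The main obstacle is making the upper bound genuinely uniform: one must be careful that the covering ball $B(x,r)$ is covered by basic elements of order exactly $k-1$ (not deeper), that the overlap count $N_0$ truly depends only on $d$ and $c_*$ and not on $k$ or on the particular Moran construction, and that the bound $\mu(J_\tau)\le$ const$\cdot(n_1\cdots n_{k-1})^{-1}$ holds uniformly — the latter needs a short argument since $(\ref{E:mufnu})$ only gives the lower inequality $\mu(J_\sigma)\ge(n_1\cdots n_k)^{-1}$ and the measure could a priori concentrate. The cleanest route for the upper mass bound is: if $J_\tau$, $\tau\in\Omega_{k-1}$, meets $B(x,r)$, pick $y\in E\cap J_\tau$, pick the order-$(k-1)$ word $\tau'$ with $y\in J_{\tau'}$; disjointness of interiors forces $J_{\tau'}=J_\tau$ up to boundary, and then $\mu(J_\tau)=\sum_{\eta}\nu(C_\eta)$ over the order-$(k-1)$ words $\eta$ with $J_\eta\cap E\subset J_\tau$, which by bounded overlap of the $J_\eta$ at the same level is at most $N_0\cdot(n_1\cdots n_{k-1})^{-1}$. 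Once these uniformities are in hand the two displayed inequalities combine to give the statement, with $C_E = N_0^2$ or a similar explicit constant depending only on $d$ and $c_*$.
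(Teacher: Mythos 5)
Your proposal is correct and follows essentially the same route as the paper: the lower bound via an order-$k$ basic element $J_\sigma\ni x$ with $|J_\sigma|=r_k<r$, hence $J_\sigma\subset B(x,r)$ and $\mu(B(x,r))\geq\nu(C_\sigma)=(n_1\cdots n_k)^{-1}$; the upper bound via a Lebesgue-volume packing count showing that at most a constant number of order-$(k-1)$ elements meet $B(x,r)\subset B(x,2r_{k-1})$. The paper concludes more directly than you do: it observes that $f^{-1}(B(x,r))\subset\bigcup_{\sigma'\in\Lambda_{x,r}}C_{\sigma'}$ (any $w$ with $f(w)\in B(x,r)$ has $J_{w|_{k-1}}$ meeting the ball), so $\mu(B(x,r))\leq\#\Lambda_{x,r}\cdot(n_1\cdots n_{k-1})^{-1}$ in one step, with no need for your intermediate bound on the individual masses $\mu(J_\tau)$. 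Your detour through $\mu(J_\tau)\lesssim(n_1\cdots n_{k-1})^{-1}$ does work via the second bounded-overlap count you describe, but two of the justifications you sketch for it are not right: the parenthetical claim that the sets $f^{-1}(J_\tau)$ overlap only on a $\nu$-null boundary set is false in general (one can build a Moran set in the plane in which all of $E\cap J_1$ lies on the common face of $J_1$ and $J_2$, so $\nu(f^{-1}(J_1\cap J_2))>0$), and the displayed identity $\mu(J_\tau)=\sum_\eta\nu(C_\eta)$ over $\eta$ with $J_\eta\cap E\subset J_\tau$ should be an inequality $\mu(J_\tau)\leq\sum\nu(C_\eta)$ taken over all $\eta\in\Omega_{k-1}$ with $J_\eta\cap J_\tau\neq\varnothing$. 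With those repairs (or by adopting the paper's one-step pullback) your argument is complete.
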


\begin{proof}
Suppose $J_{\sigma}$ is a basic element of order $k$ containing $x$; since
$|J_{\sigma}|=r_{k},$ we have $J_{\sigma}\subset B(x,r).$ By (\ref{E:mufnu})
we have
\begin{equation}
\mu(B(x,r))\geq\mu(J_{\sigma})\geq(n_{1}\cdots n_{k})^{-1}.\label{xx}%
\end{equation}

Let $\Lambda_{x,r}=\{\sigma^{\prime}:\sigma^{\prime}\in\Omega_{k-1}$ and
$J_{\sigma^{\prime}}\cap B(x,r)\neq\varnothing\}.$ We will show that
$\#\Lambda_{x,r}\leq C_{E}$ for some constant $C_{E}>1$ independent of $x$ and
$r.$

Since \textrm{int}($J_{\sigma^{\prime}}$)$\cap$ \textrm{int}($J_{\sigma
^{^{\prime\prime}}}$)=$\varnothing$ for all distinct $\sigma^{\prime}$ and
$\sigma^{\prime\prime}$ in $\Lambda_{x,r}$, and
\[
\bigcup\nolimits_{\sigma^{\prime}\in\Lambda_{x,r}}\mathrm{int}(J_{\sigma
^{\prime}})\subset B(x,r+r_{k-1})\subset B(x,2r_{k-1}),
\]
we have
\begin{align*}
(\#\Lambda_{x,r})\mathcal{L}(\text{\textrm{int}}(J))(r_{k-1})^{d}  &
={\sum\nolimits_{\sigma^{\prime}\in\Lambda_{x,r}}}\mathcal{L}%
(\text{\textrm{int}}(J_{\sigma^{\prime}}))\\
&  =\mathcal{L}\left( \bigcup\nolimits_{\sigma^{\prime}\in\Lambda_{x,r}%
}\text{\textrm{int}}(J_{\sigma^{\prime}})\right) \\
&  \leq2^{d}(r_{k-1})^{d}\mathcal{L}(B(0,1)),
\end{align*}
which implies $\#\Lambda_{x,r}\leq\frac{2^{d}\mathcal{L}(B(0,1))}%
{\mathcal{L}(\text{\textrm{int}}(J))}=:C_{E}.$ Therefore,
\begin{align}
\label{yy}\mu(B(x,r))  &  =\nu(f^{-1}(B(x,r)))\nonumber\\
&  \leq\nu\{w\in\Omega^{\infty}:f(C_{w|_{k-1}})\cap B(x,r)\neq\varnothing\}\\
&  \leq{\displaystyle\sum\limits_{\sigma^{\prime}\in\Lambda_{x,r}}}
\nu(C_{\sigma^{\prime}})\leq C_{E}(n_{1}\cdots n_{k-1})^{-1}.\nonumber
\end{align}

Then this lemma follows from (\ref{xx}) and (\ref{yy}).
\end{proof}

\subsection{Proof of Proposition \ref{P:Mor is Homo}}

\

Using Lemma \ref{L:Moran}, we can prove that all Moran sets are\ homogeneous.

\begin{proof}
[Proof of Proposition \ref{P:Mor is Homo}]$\ $ Take $\lambda_{E}=C_{E}c_{\ast
}^{-d}.$\ For any $x_{1},x_{2}\in E,$ $r\in(0,|E|],$ if $r_{k}<r\leq r_{k-1}$
$(k\geq1),$ by Lemma \ref{L:Moran}, we have
\[
\lambda_{E}^{-1}\leq\frac{1}{C_{E}n_{k}}\leq\frac{\mu(B(x_{1},r))}{\mu
(B(x_{2},r))}\leq C_{E}n_{k}\leq\lambda_{E}.
\]

\medskip

Take $\kappa_{A}\in(0,1)$ small enough such that
\begin{equation}
\delta_{A}:=\frac{1}{C_{E}}\cdot2^{\frac{\log\kappa_{A}}{\log c_{\ast}}-2}>1.
\label{zzz}%
\end{equation}

Assume that $r_{k}<r\leq r_{k-1}$ and $r_{k^{\prime}}<\kappa_{A}r\leq
r_{k^{\prime}-1}$ with $k^{\prime}\geq k$. Then $k^{\prime}\geq k+1$ and
\begin{equation}
\label{tttt}\frac{n_{1}\cdots n_{k^{\prime}-1}}{C_{E}n_{1}\cdots n_{k}}%
\leq\frac{\mu(B(x,r))}{\mu(B(x,\kappa_{A}r))}\leq\frac{C_{E}n_{1}\cdots
n_{k^{\prime}}}{n_{1}\cdots n_{k-1}},
\end{equation}
where
%

\begin{equation}
\label{ttt}%
\begin{split}
\frac{C_{E}n_{1}\cdots n_{k^{\prime}}}{n_{1}\cdots n_{k-1}}  &  \leq
C_{E}(n_{k}\cdots n_{k^{\prime}})\leq C_{E}(c_{\ast}^{-d})^{k^{\prime}-k+1},\\
\frac{n_{1}\cdots n_{k^{\prime}-1}}{C_{E}n_{1}\cdots n_{k}}  &  \geq\frac
{1}{C_{E}}(n_{k+1}\cdots n_{k^{\prime}-1})\geq\frac{1}{C_{E}} \cdot
2^{k^{\prime}-k-1}.
\end{split}
\end{equation}

Now, we have%
\[
(c_{k}\cdots c_{k^{\prime}}=)\frac{r_{k^{\prime}}}{r_{k-1}}\leq\kappa_{A}%
\leq\frac{r_{k^{\prime}-1}}{r_{k}}(=c_{k+1}\cdots c_{k^{\prime}-1}),
\]
which implies
\[
(k^{\prime}-k+1)\log c_{\ast}\leq\log\kappa_{A}\leq(k^{\prime}-k-1)\log
c^{\ast},
\]
i.e.,
\begin{equation}
\frac{\log\kappa_{A}}{\log c_{\ast}}-1\leq k^{\prime}-k\leq\frac{\log
\kappa_{A}}{\log c^{\ast}}+1. \label{yyy}%
\end{equation}

Let $\Delta_{A}=C_{E}(c_{\ast}^{-d})^{\frac{\log\kappa_{A}}{\log c^{\ast}}%
+2}.$ Applying (\ref{zzz}) and (\ref{yyy}) to (\ref{tttt})-(\ref{ttt}), we
obtain
\[
\delta_{A}\leq\frac{\mu(B(x,r))}{\mu(B(x,\kappa_{A}r))}\leq\Delta_{A}.
\]

Lemma \ref{L:Moran} and (\ref{dd}) shows that we can take
\[
\alpha_{E}(r)=\frac{\log n_{1}\cdots n_{k}}{-\log c_{1}\cdots c_{k}}%
\]
whenever $r_{k}<r\leq r_{k-1}.$
\end{proof}

\subsection{Moran sets which are not Ahlfors--David regular}

\

For Moran set $E,$ it follows from Propositions \ref{P:Prop of Homo}%
-\ref{P:Mor is Homo} (also see \cite{Feng Wen Wu} and \cite{Wen}) that
\begin{equation}
\dim_{H}E=\liminf_{k\rightarrow\infty}\frac{\log n_{1}\cdots n_{k}}{-\log
c_{1}\cdots c_{k}},\text{ }\dim_{P}E=\limsup_{k\rightarrow\infty}\frac{\log
n_{1}\cdots n_{k}}{-\log c_{1}\cdots c_{k}}. \label{dim}%
\end{equation}
Since $\dim_{H}F=\dim_{P}F$ for any Ahlfors--David regular set $F,$ we have

\begin{proposition}
\label{P;homo not regular}If $\liminf_{k\rightarrow\infty}\frac{\log
n_{1}\cdots n_{k}}{-\log c_{1}\cdots c_{k}}<\limsup_{k\rightarrow\infty}%
\frac{\log n_{1}\cdots n_{k}}{-\log c_{1}\cdots c_{k}},$ then $E$ is not
Ahlfors--David regular for any $E\in\mathcal{M}(J,\{n_{k}\}_{k},\{c_{k}%
\}_{k}).$
\end{proposition}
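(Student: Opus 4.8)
The plan is to deduce the statement immediately from the dimension identities already recorded in (\ref{dim}), combined with the elementary fact that Ahlfors--David regularity forces the Hausdorff and packing dimensions to coincide.

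First I would recall that, by Proposition \ref{P:Mor is Homo}, the Moran set $E$ is homogeneous, so Proposition \ref{P:Prop of Homo} applies and yields
\[
\dim_{H}E=\liminf_{r\rightarrow0}\alpha_{E}(r),\qquad \dim_{P}E=\limsup_{r\rightarrow0}\alpha_{E}(r).
\]
Using the explicit representative $\alpha_{E}(r)=\frac{\log n_{1}\cdots n_{k}}{-\log c_{1}\cdots c_{k}}$ for $r_{k}<r\leq r_{k-1}$ from Proposition \ref{P:Mor is Homo}, together with the fact that the breakpoints $r_{k}=c_{1}\cdots c_{k}$ tend to $0$ (by (\ref{dd}), which uses the standing hypothesis $c_{\ast}>0$), one recovers (\ref{dim}):
\[
\dim_{H}E=\liminf_{k\rightarrow\infty}\frac{\log n_{1}\cdots n_{k}}{-\log c_{1}\cdots c_{k}},\qquad \dim_{P}E=\limsup_{k\rightarrow\infty}\frac{\log n_{1}\cdots n_{k}}{-\log c_{1}\cdots c_{k}}.
\]

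Next I would invoke the hypothesis: since $\liminf_{k\rightarrow\infty}\frac{\log n_{1}\cdots n_{k}}{-\log c_{1}\cdots c_{k}}<\limsup_{k\rightarrow\infty}\frac{\log n_{1}\cdots n_{k}}{-\log c_{1}\cdots c_{k}}$, the two displays above give $\dim_{H}E<\dim_{P}E$. On the other hand, as observed in the introduction right after (\ref{A-D}), every Ahlfors--David $s$-regular set $F$ satisfies $\dim_{H}F=\dim_{P}F=s$. Therefore $E$ cannot be Ahlfors--David regular. Since nothing in the argument depends on the particular choice of $E\in\mathcal{M}(J,\{n_{k}\}_{k},\{c_{k}\}_{k})$, the conclusion holds for every such $E$.

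There is essentially no obstacle here: the entire content of the proposition is absorbed into the dimension identities (\ref{dim}), which themselves rest on Propositions \ref{P:Prop of Homo} and \ref{P:Mor is Homo}. The only point worth verifying is that those identities are genuinely valid under the standing assumption $c_{\ast}=\inf_{k}c_{k}>0$, so that $r_{k}\to0$ and the indexing $r_{k}<r\leq r_{k-1}$ does capture the $r\rightarrow0$ behaviour of $\alpha_{E}(r)$; this is exactly what (\ref{dd}) and the proof of Proposition \ref{P:Mor is Homo} provide.
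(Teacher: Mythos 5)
Your proposal is correct and follows exactly the paper's own route: it derives the dimension formula (\ref{dim}) from Propositions \ref{P:Mor is Homo} and \ref{P:Prop of Homo} and then notes that Ahlfors--David regularity would force $\dim_{H}E=\dim_{P}E$, contradicting the hypothesis. No further comment is needed.
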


\begin{remark}
The above result shows that we can find lots of homogeneous sets which are not
Ahlfors--David regular.
\end{remark}

The following example shows that a homogeneous set $E\ $with $\dim_{H}%
E=\dim_{P}E$ need not be Ahlfors--David regular.

\begin{example}
\label{EX:k+1/k+2 copy(1)}Let $n_{k}\equiv2$ and $c_{k}=\frac{k+1}{2(k+2)}$
for all $k\geq1.$ Denote $J=[0,1],$ and let $J_{1}=[0,c_{1}],$\ $J_{2}%
=[1-c_{1},1]$.\ Inductively, if the interval $J_{i_{1}\cdots i_{k}}%
=[c_{i_{1}\cdots i_{k}},d_{i_{1}\cdots i_{k}}]$ have been defined$,$ we define
its subintervals $J_{i_{1}\cdots i_{k}1}=[c_{i_{1}\cdots i_{k}},c_{i_{1}\cdots
i_{k}}+c_{k}|J_{i_{1}\cdots i_{k}}|]$ and $J_{i_{1}\cdots i_{k}2}%
=[d_{i_{1}\cdots i_{k}}-c_{k}|J_{i_{1}\cdots i_{k}}|,d_{i_{1}\cdots i_{k}}]$.
As above, we get a Moran set $E.$ Using $(\ref{dim})$, we have $\dim_{H}%
E=\dim_{P}E=1.$ Notice that $\mathcal{H}^{1}(E)=\mathcal{L(}E\mathcal{)}=0,$
where $\mathcal{L}$ is the Lebesgue measure. If $E$ is Ahlfors--David
$1$-regular, then $\mathcal{H}^{1}(E)>0$, which is a contradiction. That means
$E$ is not Ahlfors--David regular.

\bigskip
\end{example}

\section{Approximating by Moran sets}

\subsection{Bilipschitz image of homogeneous set}

\

Under bilipschitz mapping, the homogeneous property will be preserved$.$

\begin{lemma}
\label{L:Bili of homo}Suppose $A(\subset X)$ is a homogeneous set. If $A$ is
bilipschitz equivalent to $B(\subset Y)$, then $B$ is also homogeneous and
$\chi(A,B)=0.$
\end{lemma}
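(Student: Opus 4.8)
The plan is to transfer the measure $\mu$ on $A$ to $B$ via the bilipschitz bijection and check that the two defining properties (\ref{cc}) and (\ref{***}) survive, with constants depending only on the bilipschitz constant $L$ and on $\lambda_A,\kappa_A,\delta_A,\Delta_A$. Let $g\colon A\to B$ be the bijection with $L^{-1}\mathrm{d}(x_1,x_2)\le \mathrm{d}(g(x_1),g(x_2))\le L\,\mathrm{d}(x_1,x_2)$, and define the pushforward $\nu=g_*\mu$, i.e. $\nu(\cdot)=\mu(g^{-1}(\cdot))$, a Borel probability measure supported on $B$. The key geometric fact I would record first is the ball sandwich: for every $y=g(x)\in B$ and $r>0$,
\[
g\big(A\cap B(x,r/L)\big)\subset B\cap B(y,r)\subset g\big(A\cap B(x,Lr)\big),
\]
which gives $\mu(B(x,r/L))\le \nu(B(y,r))\le \mu(B(x,Lr))$ for all $x\in A$, $0<r\le|B|$ (adjusting the ranges of $r$ using $L^{-1}|A|\le|B|\le L|A|$).

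Next I would verify (\ref{cc}) for $\nu$. Given $y_i=g(x_i)\in B$ and $0<r\le|B|$, the sandwich bounds and (\ref{cc}) for $\mu$ give
\[
\frac{\nu(B(y_1,r))}{\nu(B(y_2,r))}\le\frac{\mu(B(x_1,Lr))}{\mu(B(x_2,r/L))}
=\frac{\mu(B(x_1,Lr))}{\mu(B(x_2,Lr))}\cdot\frac{\mu(B(x_2,Lr))}{\mu(B(x_2,r/L))}
\le \lambda_A\cdot\frac{\mu(B(x_2,Lr))}{\mu(B(x_2,r/L))}.
\]
The leftover ratio $\mu(B(x_2,Lr))/\mu(B(x_2,r/L))$ is controlled: since $L^2$ is a fixed constant, iterating the doubling inequality (the upper half of (\ref{***}), which yields $\mu$ doubling as already used in the proof of Proposition \ref{P:Prop of Homo}) a bounded number of times bounds it by a constant $C(L,\kappa_A,\Delta_A)$. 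Hence $\nu$ satisfies (\ref{cc}) with $\lambda_B=\lambda_A C$. For (\ref{***}) for $\nu$: pick $\kappa_B$ small, say $\kappa_B=\kappa_A^m/L^2$ for a large integer $m$; then $\nu(B(y,r))\le\mu(B(x,Lr))$ and $\nu(B(y,\kappa_B r))\ge\mu(B(x,\kappa_B r/L))=\mu(B(x,\kappa_A^m r/L^2))$, and applying the lower bound $\delta_A$ in (\ref{***}) $m$ times (after absorbing the factor $L^2$ into one more step) forces the ratio $\nu(B(y,r))/\nu(B(y,\kappa_B r))\ge\delta_A>1$ once $m$ is large enough; the upper bound $\Delta_B$ comes symmetrically from doubling. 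This gives that $B$ is homogeneous.

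For the claim $\chi(A,B)=0$: by definition $\chi(A,B)=\limsup_{r\to0}|\log(\alpha_A(r)/\alpha_B(r))|$, and $\chi(A,B)=0$ iff $\alpha_A(x,r)-\alpha_B(y,r)\to0$ for some (hence any) $x\in A$, $y\in B$. Fix $x^\ast\in A$ and $y^\ast=g(x^\ast)\in B$. From the sandwich, $\log\mu(B(x^\ast,r/L))\le\log\nu(B(y^\ast,r))\le\log\mu(B(x^\ast,Lr))$; dividing by $\log r<0$ and using that shifting the radius by a bounded factor $L$ changes $\log\mu(B(x^\ast,\cdot))$ only by a bounded additive amount (again by doubling / (\ref{***})), one gets $|\log\nu(B(y^\ast,r))-\log\mu(B(x^\ast,r))|=O(1)$, so after dividing by $\log r$ the difference $\alpha_B(y^\ast,r)-\alpha_A(x^\ast,r)=O(|\log r|^{-1})\to0$. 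Thus $\alpha_B(r)\sim\alpha_A(r)$ and $\chi(A,B)=0$.

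The only genuinely delicate point — and the step I'd be most careful with — is keeping all the constant-shuffling honest: one must make sure the $r$-ranges (which are $(0,|A|]$ for $\mu$ and $(0,|B|]$ for $\nu$, differing by the factor $L$) and the "radius inflated/deflated by a bounded factor" manipulations are each justified by a \emph{bounded} number of applications of (\ref{***}), so that the final constants $\lambda_B,\kappa_B,\delta_B,\Delta_B$ genuinely depend only on $L$ and the data of $A$. Everything else is the routine pushforward-of-measure argument together with the ball sandwich.
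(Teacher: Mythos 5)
Your proposal is correct and follows essentially the same route as the paper: push forward $\mu$ by the bilipschitz map, use the ball sandwich $B(x,r/L)\subset f^{-1}(B(y,r))\subset B(x,Lr)$ to transfer (\ref{cc}) and (\ref{***}) at the cost of boundedly many applications of doubling (handling the range $r\in[|A|/L,|B|]$ separately), the only cosmetic difference being that the paper derives $\chi(A,B)=0$ from covering numbers via (\ref{behavior}) while you compare $\log\nu(B(y^{\ast},r))$ with $\log\mu(B(x^{\ast},r))$ directly. One small slip to fix: in your verification of the lower bound in (\ref{***}) you pair the sandwich inequalities the wrong way round --- to bound $\nu(B(y,r))/\nu(B(y,\kappa_B r))$ from below you need $\nu(B(y,r))\geq\mu(B(x,r/L))$ and $\nu(B(y,\kappa_B r))\leq\mu(B(x,L\kappa_B r))$, which with $\kappa_B=\kappa_A/L^2$ already gives the ratio $\geq\delta_A$ (no large power $m$ is needed).
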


\begin{proof}
Assume that $f$ is the bilipschitz map from $A$ onto $B$ with bilipschitz
constant $L\geq1,$ and $\mu$ is the corresponding measure supported on $A.$ We
define the image measure $\nu$ on $B$ with $\nu(F)=\mu(f^{-1}(F))$ for any
Borel subset $F\subset B.$ It is clear that $\nu$ is a Borel measure supported
on $B.$

Without loss of generality, we may assume that $A=X$ and $B=Y,$ the whole
metric spaces. For any $y\in B,$ $0<r\leq|B|,$ we have%
\begin{equation}
B(f^{-1}(y),r/L)\subset f^{-1}(B(y,r))\subset B(f^{-1}(y),Lr);
\label{E:Lip image}%
\end{equation}
then
\begin{equation}
\mu(B(f^{-1}(y),r/L))\leq\nu(B(y,r))\leq\mu(B(f^{-1}(y),Lr)).
\label{E:mu nu mu}%
\end{equation}

Using Definition \ref{D:Main} and (\ref{E:mu nu mu})$,$ for any $y_{1}%
,y_{2}\in B$ and $r\leq|A|/L,$ we have
\[
\frac{\nu(B(y_{1},r))}{\nu(B(y_{2},r))}\leq\frac{\mu(B(f^{-1}(y_{1}),Lr))}%
{\mu(B(f^{-1}(y_{2}),r/L))}\leq\lambda_{A}\frac{\mu(B(f^{-1}(y_{1}),Lr))}%
{\mu(B(f^{-1}(y_{1}),r/L))}\leq\lambda_{A}(\Delta_{A})^{n},
\]
where $(\kappa_{A})^{n}\leq L^{-2}<(\kappa_{A})^{n-1}$ for some integer
$n\geq0.$ Then $n-1< \frac{-2\log L}{\log\kappa_{A}}\leq n,$ and thus
$n\leq1-\frac{2\log L}{\log\kappa_{A}}.$ Therefore, for all $y_{1},y_{2}\in B$
and $r\leq|A|/L,$%
\[
\frac{\nu(B(y_{1},r))}{\nu(B(y_{2},r))}\leq\lambda_{A}(\Delta_{A})^{n}%
\leq\lambda:=\lambda_{A}\left(  \Delta_{A}\right)  ^{1-\frac{2\log L}%
{\log\kappa_{A}}},
\]
which implies
\[
\lambda^{-1}\leq\frac{\nu(B(y_{1},r))}{\nu(B(y_{2},r))}\leq\lambda\text{
}\text{for any}\text{ }y_{1},y_{2}\in B\text{, }r\leq|A|/L.
\]

Fix a point $y^{\ast}\in B$ and let $\lambda_{B}=\frac{\mu(A)}{\nu(B(y^{\ast
},|A|/L))}\cdot\lambda\geq\lambda.$

Given any $r\in\lbrack|A|/L,|B|],$ we have $\nu(B(y_{1},r))\leq\mu(A)$ and
$\nu(B(y_{2},r))\geq\nu(B(y_{2},|A|/L))\geq\lambda^{-1}\nu(B(y^{\ast
},|A|/L)),$ which implies
\begin{equation}
\lambda_{B}^{-1}\leq\frac{\nu(B(y_{1},r))}{\nu(B(y_{2},r))}\leq\lambda
_{B}\text{ }\text{for any}\text{ }y_{1},y_{2}\in B\text{, }r\leq|B|.
\label{B1}%
\end{equation}

\medskip

Let $\kappa_{B}=\kappa_{A}/L^{2}.$ Using (\ref{E:mu nu mu}), for $r\leq
|B|\leq|A|\cdot L,$ for any $x\in A$ we have
\[
\frac{\nu(B(f(x),r))}{\nu(B(f(x),\kappa_{B}r))}\geq\frac{\mu(B(x,r/L))}%
{\mu(B(x,\kappa_{A}r/L))}\geq\delta_{A},
\]
since $r/L\leq|A|.$

On the other hand, if $r\leq|A|/L,$ then
\[
\frac{\nu(B(f(x),r))}{\nu(B(f(x),\kappa_{B}r))}\leq\frac{\mu(B(x,rL))}%
{\mu(B(x,\kappa_{A}r/L^{3}))}\leq(\Delta_{A})^{m},
\]
where $(\kappa_{A})^{m}\leq\kappa_{A}L^{-4}<(\kappa_{A})^{m-1}$ for some
integer $m\geq1.$ Then $(\kappa_{A})^{m-1}\leq L^{-4}<(\kappa_{A})^{m-2},$
i.e., $m-2\leq-\frac{4\log L}{\log\kappa_{A}}\leq m-1.$ Therefore,
\[
\frac{\nu(B(f(x),r))}{\nu(B(f(x),\kappa_{B}r))}\leq(\Delta_{A})^{2-\frac{4\log
L}{\log\kappa_{A}}}\text{ for any }r\leq|A|/L.
\]
Let $\Delta_{B}=\max((\Delta_{A})^{2-\frac{4\log L}{\log\kappa_{A}}}%
,\frac{\lambda_{B}\mu(A)}{\nu(B(y^{\ast},\kappa_{B}|A|/L))})$. Then we have%
\[
\frac{\nu(B(f(x),r))}{\nu(B(f(x),\kappa_{B}r))}\leq\Delta_{B}\text{ for all
}r\leq|B|.
\]
Therefore, for any $x\in A$ and $r\leq|B|,$
\begin{equation}
\delta_{A}\leq\frac{\nu(B(f(x),r))}{\nu(B(f(x),\kappa_{B}r))}\leq\Delta_{B}.
\label{B2}%
\end{equation}
It follows from (\ref{B1}) and (\ref{B2}) that $B$ is also homogeneous.

\medskip

Using (\ref{E:Lip image}), we have
\begin{equation}
N(B,Lr)\leq N(A,r)\leq N(B,L^{-1}r). \label{E:Lip Num}%
\end{equation}
It follows from (\ref{behavior}), (\ref{E:Lip Num}) and the fact that $A$ is a
doubling metric space that $\chi(A,B)=0.$
\end{proof}

\begin{proof}
[Proof of Proposition \ref{p:mei}]$\ $

In fact, suppose $\sup_{r<r_{0}}\alpha_{A}(r)<\infty$ for some $r_{0}$ small
enough$.$ We note that $\varphi(x)=e^{x}-1$ is continuous at $0$, then for fixed $\varepsilon>0$ small enough, there exists
$\delta>0$ such that if $\chi(A,B)=\limsup_{r\rightarrow0}\left\vert
\log\frac{\alpha_{B}(r)}{\alpha_{A}(r)}\right\vert <\delta,$ then
$|\frac{\alpha_{B}(r)}{\alpha_{A}(r)}-1|=|\varphi(\log\frac{\alpha_{B}(r)}{\alpha_{A}(r)})|<\varepsilon/(\sup_{r<r_{0}}\alpha
_{A}(r))$ for all $r<r_{1}\,$where $r_{1}<r_{0}$ is a constant. Hence
\[
|\alpha_{B}(r)-\alpha_{A}(r)|<\varepsilon\text{ for all }r<r_{1},
\]
and thus $|\overline{\lim}_{r\rightarrow0}\alpha_{B}(r)-\overline{\lim
}_{r\rightarrow0}\alpha_{A}(r)|,$ $|\underline{\lim}_{r\rightarrow0}\alpha
_{B}(r)-\underline{\lim}_{r\rightarrow0}\alpha_{A}(r)|<\varepsilon.$ It
follows from (1) of Proposition \ref{P:Prop of Homo} that $|\dim_{P}B-\dim
_{P}A|,|\dim_{H}B-\dim_{H}A|<\varepsilon.$
\end{proof}

\subsection{Proof of the approximation theorem}

\

For homogeneous sets, we can approximate them by their subsets which are
bilipschitz images of Moran sets in Euclidean spaces.

\begin{proof}
[Proof of Theorem \ref{T:E Emb A}]\

We can prove Theorem \ref{T:E Emb A} in three steps:

\begin{enumerate}
\item For any $\varepsilon>0,$ choose $\eta$ small enough and construct a
subset $A(\eta)$ of $A,$ such that $\mathrm{d}_{H}(A(\eta),A)<\varepsilon.$

\item Corresponding to $A(\eta),$ construct a Moran set $E(\eta)$ in
$\mathbb{R}^{d}$ for some $d\in\mathbb{N}.$ Show that the natural bijection
between $A(\eta)$ and $E(\eta)$ is a bilipschitz map.

\item Verify that $\chi(E(\eta),A)=\chi(A(\eta),A)<\varepsilon.$
\end{enumerate}

Without loss of generality, assume that $A$ is homogeneous with $|A|=1.$ Let
$\underline{\mu}(r)=\inf_{x\in A}\mu(B(x,r))$ and $\overline{\mu}%
(r)=\sup_{x\in A}\mu(B(x,r)).$ Fix a point $x^{\ast}\in A;$ then using
(\ref{cc}) and (\ref{***}), we have
\[
\frac{\underline{\mu}(r/2)}{\overline{\mu}(2r^{\prime})}\geq\frac{1}%
{(\lambda_{A})^{2}}\frac{\mu(B(x^{\ast},r/2))}{\mu(B(x^{\ast},2r^{\prime}%
))}\geq\frac{1}{(\lambda_{A})^{2}}(\delta_{A})^{n},
\]
where $\frac{2r^{\prime}}{r/2}\leq(\kappa_{A})^{n}$ for some integer $n.$
Taking $n$ large enough, we have
\begin{equation}
\frac{\underline{\mu}(r/2)}{\overline{\mu}(2r^{\prime})}\geq2\ \text{if }%
\frac{r^{\prime}}{r}\leq\eta_{0} \label{ty1}%
\end{equation}
for some constant $\eta_{0}.$ By Definition \ref{D:Main}$,$ there exists a
constant $C_{0}\in(0,1)$ such that for any $r,r^{\prime}$ with $\frac
{r^{\prime}}{r}\leq\eta_{0}$,%
\begin{equation}
C_{0}\frac{\overline{\mu}(r)}{\overline{\mu}(r^{\prime})}\leq\left\lfloor
\frac{\underline{\mu}(r/2)}{\overline{\mu}(2r^{\prime})}\right\rfloor
\leq\frac{\overline{\mu}(r)}{\overline{\mu}(r^{\prime})},
\label{E:mu/mu mu/mu}%
\end{equation}
where $\left\lfloor \cdot\right\rfloor $ denotes the integral part of number.

\textbf{Step 1}. Let $\varepsilon>0$. Fix so large an integer $n$ that
\begin{equation}
\eta_{1}:=(\kappa_{A})^{n}<\min(\eta_{0},\frac{1}{4},\frac{\varepsilon}%
{3})\text{ and }\left\vert \frac{\log C_{0}}{n\log\delta_{A}}\right\vert
\leq\frac{\varepsilon}{2}. \label{txt2}%
\end{equation}
Now choose $\eta>0$ with $\eta\leq\eta_{1}$. Then $\overline{\mu}(\eta
^{k})\leq(\delta_{A})^{-nk}\overline{\mu}(1)$ for each $k\geq1$, and thus
\begin{equation}
\limsup_{k\rightarrow\infty}\left\vert \frac{(k-1)\log C_{0}}{\log
\overline{\mu}(\eta^{k})}\right\vert \leq\left\vert \frac{\log C_{0}}%
{n\log\delta_{A}}\right\vert \leq\frac{\varepsilon}{2} . \label{E:eta2}%
\end{equation}

For all $k\geq2,$ let
\begin{equation}
\label{E:nk geq 2}n_{k}=\left\lfloor \frac{\underline{\mu}(\eta^{k-1}%
/2)}{\overline{\mu}(2\eta^{k})}\right\rfloor \geq2,
\end{equation}
due to (\ref{ty1}) as $\eta<\eta_{0}.$

We begin to construct the $A(\eta).$ In the first step of the construction, we
get a maximal number $P_{A}=P(A,\eta)$ of disjoint $\eta$-balls $\{B(x_{i}%
,\eta)\}_{i=1}^{P_{A}}$ with centers in $A$. For a small enough $\eta,$ let
\[
n_{1}=P_{A}\geq2.
\]

Given $\{n_{k}\}_{k}$ as above, let $\Omega^{\infty}$ denote the collection of
all infinite sequences $i_{1}\cdots i_{k}\cdots$ with $i_{1}\cdots i_{k}%
\in\Omega_{k}$ for every $k\geq1.$

For $k\geq2$, inductively assume that for $k-1$, we have obtained a family of
disjoint balls $\{B(x_{i_{1}\cdots i_{k-1}},\eta^{k-1})\}_{i_{1}\cdots
i_{k-1}\in\Omega_{k-1}}.$ We will find $\{B(x_{i_{1}\cdots i_{k-1}i_{k}}%
,\eta^{k})\}_{i_{1}\cdots i_{k-1}i_{k}\in\Omega_{k}}$ satisfying for every
$i_{1}\cdots i_{k-1}\in\Omega_{k-1},$

\begin{itemize}
\item $x_{i_{1}\cdots i_{k-1}i_{k}}\in B(x_{i_{1}\cdots i_{k-1}},\eta
^{k-1}/2)\cap A$ for all $1\leq i_{k}\leq n_{k}$;

\item $B(x_{i_{1}\cdots i_{k-1}i_{k}},\eta^{k})\cap B(x_{i_{1}\cdots
i_{k-1}j_{k}},\eta^{k})=\varnothing$ for all $i_{k}\neq j_{k}.$
\end{itemize}

In fact, fixing a sequence $i_{1}\cdots i_{k-1}\in\Omega_{k-1},$ we take a
maximal number $P_{i_{1}\cdots i_{k-1}}$ of disjoint $\eta^{k}$-balls with
centers in $B(x_{i_{1}\cdots i_{k-1}},\eta^{k-1}/2)\cap A.$ We will estimate
$P_{i_{1}\cdots i_{k-1}}$. At first, since $\eta<\frac{1}{4}$ by
(\ref{txt2})$,$ for every $\eta^{k}$-ball $B(x,\eta^{k})$ as above, we have%
\begin{equation}
B(x,\eta^{k})\subset B(x_{i_{1}\cdots i_{k-1}},\eta^{k-1}/2+\eta^{k})\subset
B(x_{i_{1}\cdots i_{k-1}},\frac{3}{4}\eta^{k-1}). \label{E:eta 3/4 eta}%
\end{equation}
Since these $P_{i_{1}\cdots i_{k-1}}$ disjoint $\eta^{k}$-balls are contained
in $B(x_{i_{1}\cdots i_{k-1}},\eta^{k-1})$, we have%
\begin{equation}
P_{i_{1}\cdots i_{k-1}}\leq\frac{\overline{\mu}(\eta^{k-1})}{\underline{\mu
}(\eta^{k})}. \label{E:M<mu/mu}%
\end{equation}
On the other hand, by (\ref{E:N<M}), $B(x_{i_{1}\cdots i_{k-1}},\eta
^{k-1}/2)\cap A$ can be covered by $P_{i_{1}\cdots i_{k-1}}$ balls of radius
$2\eta^{k},$ that means $\underline{\mu}(\eta^{k-1}/2)\leq P_{i_{1}\cdots
i_{k-1}}\cdot\overline{\mu}(2\eta^{k}),$ i.e.,
\[
P_{i_{1}\cdots i_{k-1}}\geq\frac{\underline{\mu}(\eta^{k-1}/2)}{\overline{\mu
}(2\eta^{k})}\geq n_{k}.
\]
Hence we can take $n_{k}$ disjoint $\eta^{k}$-balls with their centers in
$B(x_{i_{1}\cdots i_{k-1}},\eta^{k-1}/2)\cap A.$ Denote their centers by
$\{x_{i_{1}\cdots i_{k-1}i_{k}}\}_{i_{k}=1}^{n_{k}}.$

We define
\begin{equation}
A(\eta)=\bigcap\nolimits_{k\geq1}\bigcup\nolimits_{i_{1}\cdots i_{k}\in
\Omega_{k}}B(x_{i_{1}\cdots i_{k}},\eta^{k})\subset A.
\end{equation}
For any $i_{1}\cdots i_{k}\cdots\in\Omega^{\infty},$ let $x_{i_{1}\cdots
i_{k}\cdots}\in A(\eta)$ be such that
\begin{equation}
\{x_{i_{1}\cdots i_{k}\cdots}\}=\bigcap\nolimits_{k\geq1}B(x_{i_{1}\cdots
i_{k}},\eta^{k}).
\end{equation}

Since in the first step of the construction of $A(\eta)$, we get the maximal
number $P_{A}$ of disjoint $\eta$-balls $\{B(x_{i},\eta)\}_{i=1}^{P_{A}}$ with
centers in $A$, it follows that $A$ can be covered by $P_{A}$ balls
$\{B(x_{i},2\eta)\}_{i=1}^{P_{A}}$. Therefore%
\[
\mathrm{d}_{H}(A(\eta),A)\leq\mathrm{d}_{H}(\{x_{i}\}_{i=1}^{P_{A}%
},A)+\mathrm{d}_{H}(\{x_{i}\}_{i=1}^{P_{A}},A(\eta))\leq2\eta+\eta
<\varepsilon.
\]

\textbf{Step 2}. For the $\eta$ given, by Definition \ref{D:Main} from
(\ref{E:M<mu/mu}) it follows that $\{n_{k}\}_{k\geq1}$ is bounded. Then taking
$d$ large enough, we can construct a Moran set $E(\eta)$\ in $\mathbb{R}^{d}$
such that $E(\eta)\in\mathcal{M}( J,\{n_{k}\},\{c_{k}\})$ with $J=B(0,\frac
{1}{2}),$ $n_{k}$ defined above, and $\ c_{k}\equiv\eta$ for all $k\geq1$ such
that there is a constant $c>0$ for which
\[
\mathrm{d}(J_{i_{1}\cdots i_{k-1}i_{k}},J_{i_{1}\cdots i_{k-1}j_{k}})\geq
c\eta^{k}\text{ for all }i_{k}\neq j_{k}.
\]

For any $i_{1}\cdots i_{k}\cdots\in\Omega^{\infty},$ let $y_{i_{1}\cdots
i_{k}\cdots}\in E(\eta)$ be such that
\begin{equation}
\{y_{i_{1}\cdots i_{k}\cdots}\}={\bigcap\nolimits_{k}}J_{i_{1}\cdots i_{k}}.
\end{equation}

Naturally, we obtain a bijection $f$ from $E(\eta)$ to $A(\eta)$ such that
\begin{equation}
f(y_{i_{1}\cdots i_{k}\cdots})=x_{i_{1}\cdots i_{k}\cdots}.\text{ }%
\forall\text{ }i_{1}\cdots i_{k}\cdots\in\Omega^{\infty}.
\end{equation}
It suffices to show that $f$ is bilipschitz. In fact, for distinct points
$y^{\prime}=y_{i_{1}\cdots i_{k-1}i_{k}\cdots}$ and $y^{\prime\prime}%
=y_{i_{1}\cdots i_{k-1}j_{k}\cdots}$ with $i_{k}\neq j_{k}$ $(k\geq1),$ we
have
\begin{equation}
c\eta^{k}\leq\mathrm{d}(J_{i_{1}\cdots i_{k-1}i_{k}},J_{i_{1}\cdots
i_{k-1}j_{k}})\leq|y^{\prime}-y^{\prime\prime}|\leq|J_{i_{1}\cdots i_{k-1}%
}|=\eta^{k-1}. \label{t9}%
\end{equation}
On the other hand, $B(x_{i_{1}\cdots i_{k-1}i_{k}},\eta^{k})$ and
$B(x_{i_{1}\cdots i_{k-1}j_{k}},\eta^{k})$ are disjoint and
\[
x^{\prime}=x_{i_{1}\cdots i_{k-1}i_{k}\cdots}\in B(x_{i_{1}\cdots i_{k-1}%
i_{k}},\frac{3}{4}\eta^{k}),\text{ }x^{\prime\prime}=x_{i_{1}\cdots
i_{k-1}j_{k}\cdots}\in B(x_{i_{1}\cdots i_{k-1}j_{k}},\frac{3}{4}\eta^{k}),
\]
due to (\ref{E:eta 3/4 eta}); therefore,
\begin{equation}
\frac{1}{4}\eta^{k}\leq\mathrm{d}_{X}(x^{\prime},x^{\prime\prime}%
)\leq|B(x_{i_{1}\cdots i_{k-1}},\frac{3}{4}\eta^{k-1})|\leq\frac{3}{2}%
\eta^{k-1}. \label{t99}%
\end{equation}
It follows from (\ref{t9}) and (\ref{t99}) that $f$ is bilipschitz.

\textbf{Step 3}. For the Moran set $E(\eta)\in\mathcal{M}(J,\{n_{k}%
\}_{k},\{c_{k}\}_{k}),$ $J=B(0,1/2)$ with $|J|=1.$ Using Proposition
\ref{P:Mor is Homo}, we can take
\[
\alpha_{E(\eta)}(r)=\frac{\log n_{1}\cdots n_{k}}{-k\log\eta}\text{ for }%
\eta^{k}<r\leq\eta^{k-1},
\]
where $C_{0}\frac{\overline{\mu}(\eta^{k-1})}{\overline{\mu}(\eta^{k})}\leq
n_{k}\leq\frac{\overline{\mu}(\eta^{k-1})}{\overline{\mu}(\eta^{k})}$ for
$k\geq2$ due to (\ref{E:mu/mu mu/mu}), which implies%
\[
\frac{\log\overline{\mu}(\eta^{k})}{k\log\eta}-\left(  \frac{\log
n_{1}\overline{\mu}(\eta)}{k\log\eta}+\frac{(k-1)\log C_{0}}{k\log\eta
}\right)  \leq\alpha_{E(\eta)}(r)\leq\frac{\log\overline{\mu}(\eta^{k})}%
{k\log\eta}-\frac{\log n_{1}\overline{\mu}(\eta)}{k\log\eta}.
\]
Using (\ref{cc}) in Definition \ref{D:Main}$,$ for the homogeneous set $A$ we
have%
\begin{equation}
\frac{\log\overline{\mu}(\eta^{k-1})}{k\log\eta}\leq\alpha_{A}(x_{A}%
,r)\leq\frac{\log\underline{\mu}(\eta^{k})}{(k-1)\log\eta}\leq\frac
{\log\overline{\mu}(\eta^{k})-\log\lambda_{A}}{(k-1)\log\eta}.
\label{E:alpha k log eta}%
\end{equation}

It follows from (\ref{cc}) and (\ref{***}) that $\overline{\mu}(\eta
^{k-1})\geq\overline{\mu}(\eta^{k})\geq\varsigma\overline{\mu}(\eta^{k-1})$
for some constant $\varsigma>0,$ which implies
\begin{equation}
\frac{\log\overline{\mu}(\eta^{k-1})}{k\log\eta}-\frac{\log\overline{\mu}%
(\eta^{k})}{k\log\eta},\text{ }\frac{\log\overline{\mu}(\eta^{k})}%
{(k-1)\log\eta}-\frac{\log\overline{\mu}(\eta^{k})}{k\log\eta}=O\left(
\frac{1}{k\log\eta}\right)  . \label{txt}%
\end{equation}
By (\ref{E:alpha k log eta}) and (\ref{txt}), we can take a function
$\alpha_{A}(r)\sim\alpha_{A}(x_{A},r)$ defined by%
\[
\alpha_{A}(r)=\frac{\log\overline{\mu}(\eta^{k})}{k\log\eta}\text{ for }%
\eta^{k}<r\leq\eta^{k-1}.
\]
Using the inequality $|\log t|\leq\frac{3}{2}|t-1|$ for all $|t-1|\leq1/3$,
(\ref{E:eta2}) and Lemma \ref{L:Bili of homo}, we have, assuming
$\varepsilon/2\leq1/3$ as we may, that
\begin{align*}
&  \ \ \ \ \chi(A(\eta),A)=\chi(E(\eta),A)\\
&  =\limsup_{r\rightarrow0}\left\vert \log\frac{\alpha_{E(\eta)}(r)}%
{\alpha_{A}(r)}\right\vert \leq\frac{3}{2}\limsup_{r\rightarrow0}\left\vert
\frac{\alpha_{E(\eta)}(r)}{\alpha_{A}(r)}-1\right\vert \\
&  \leq\frac{3}{2}\limsup_{k\rightarrow\infty}\left\vert \frac{\log
n_{1}\overline{\mu}(\eta)}{\log\overline{\mu}(\eta^{k})}\right\vert +\frac
{3}{2}\limsup_{k\rightarrow\infty}\left\vert \frac{(k-1)\log C_{0}}%
{\log\overline{\mu}(\eta^{k})}\right\vert \\
&  \leq0+\frac{3}{2}\cdot\frac{\varepsilon}{2}<\varepsilon,
\end{align*}

In particular, if $A$ is a homogeneous set in $\mathbb{R}^{d},$ since any two
balls in $\mathbb{R}^{d}$ are geometrically similar, the above construction
shows that $A(\eta)$ is a Moran set. Take $f=id$ and $F=A(\eta)\subset A.$
Furthermore, using Proposition \ref{p:mei} we can approximate $A$ by Moran sets simultaneously
in three aspects: Hausdorff metric, Hausdorff dimension and
packing dimension.
\end{proof}

\section{Bilipschitz embedding of homogeneous sets}

\subsection{Necessary condition of bilipschitz embedding}

\

As shown in \cite{Deng juan}, a self-similar set satisfying {\textbf{SSC}} can
be bilipschitz embedded into any self-similar set with higher dimension.

However, for homogeneous fractals, we need the following new necessary
condition (Lemma \ref{L:blip}): if $A\hookrightarrow B,$ then
\begin{equation}
\frac{\mu(B(x,r))}{\mu(B(x,r^{\prime}))}\leq C\frac{\nu(B(y,r))}%
{\nu(B(y,r^{\prime}))}\text{ for all }r^{\prime}<r\leq\min(|A|,|B|),
\label{zip}%
\end{equation}
where $C$ is a constant.

\begin{proof}
[Proof of Lemma \ref{L:blip}]$\ $

Suppose that there is an injection $f\colon(A,$d$_{A})\rightarrow(B,$d$_{B})$
and a constant $L\geq1$ such that for all $x_{1},x_{2}\in A,$%
\[
\text{d}_{A}(x_{1},x_{2})/L\leq\text{d}_{B}(f(x_{1}),f(x_{2}))\leq
L\text{d}_{A}(x_{1},x_{2}).
\]

Given positive quantities $\{\theta_{\lambda}\}_{\lambda}$ and $\{\vartheta
_{\lambda}\}_{\lambda}$ with parameter $\lambda,$ we say that they are
comparable and denote $\theta_{\lambda}\asymp\vartheta_{\lambda},$ if there is
a constant $\rho$ independent of $\lambda$ such that
\[
\rho^{-1}\leq\frac{\theta_{\lambda}}{\vartheta_{\lambda}}\leq\rho.
\]

For any subset $\mathcal{C}$ of $A,$ let $K_{A}(\mathcal{C},r)=\max\{n:$ there
are distinct points $\{x_{i}\}_{i=1}^{n}$ of $\mathcal{C}$ such that
$\min_{i\neq j}$d$_{A}(x_{i},x_{j})\geq r\}.$ Therefore, for any $r^{\prime
}<r$,
\begin{equation}
K_{A}(B(x,r),r^{\prime})\leq K_{B}(B(f(x),Lr),r^{\prime}/L). \label{zip1}%
\end{equation}
Using Definition \ref{D:Main}, as in the proof of Proposition
\ref{P:Prop of Homo}, we obtain that
\begin{equation}
K_{A}(B(x,r),r^{\prime})\asymp P_{A}(B(x,r),r^{\prime})\asymp N_{A}%
(B(x,r),r^{\prime})\asymp\frac{\mu(B(x,r))}{\mu(B(x,r^{\prime}))},
\label{zip2}%
\end{equation}
where $P_{A}(\mathcal{C},r)=\max\{n:$ there are $n$ disjoint $r$-balls with
centers in $\mathcal{C}\}$ and $N_{A}(\mathcal{C},r)=\min\{n:$ there are $n$
$r$-balls covering $\mathcal{C}\}.$ Note that this result depends heavily on the fact that $A$ is a doubling metric space.

In the same way, we obtain that for any $y\in B,$%
\begin{equation}
K_{B}(B(f(x),Lr),r^{\prime}/L)\asymp\frac{\nu(B(f(x),Lr))}{\nu
(B(f(x),r^{\prime}/L))}\asymp\frac{\nu(B(f(x),r))}{\nu(B(f(x),r^{\prime}%
))}\asymp\frac{\nu(B(y,r))}{\nu(B(y,r^{\prime}))}. \label{zip3}%
\end{equation}

Thus (\ref{zip}) follows from (\ref{zip1})--(\ref{zip3}).

By (\ref{zip}), we have
\[
\sup\limits_{r^{\prime}<r_{0}r<r<r_{0}}\left\vert \frac{\log\mu(B(x,r))-\log
\mu(B(x,r^{\prime}))}{\log\nu(B(y,r))-\log\nu(B(y,r^{\prime}))}\right\vert
\leq1+\left\vert \frac{\log C}{\log\nu(B(y,r))-\log\nu(B(y,r^{\prime}%
))}\right\vert ,
\]
where $C$ is an independent constant. Taking $r_{0}$ small enough,
$\nu(B(y,r))/\nu(B(y,r^{\prime}))$ is so large that $\sup\limits_{r^{\prime
}<r_{0}r<r<r_{0}}\left\vert \frac{\log C}{\log\nu(B(y,r))/\nu(B(y,r^{\prime
}))}\right\vert $ is close to $0$.

On the other hand, $\left\vert \alpha_{A}(r)\log r-\log\mu(B(x,r))\right\vert
,\left\vert \alpha_{B}(r)\log r-\log\nu(B(y,r))\right\vert \leq C_{1}$ for
some $C_{1}$ due to (\ref{E:equ}), and $\log\mu(B(x,r))/\mu(B(x,r^{\prime
})),\log\nu(B(y,r))/\nu(B(y,r^{\prime}))$ are arbitrarily large when $r_{0}$
is small enough. Thus
\begin{equation}
\sup\limits_{r^{\prime}<r_{0}r<r<r_{0}}\left\vert \frac{\alpha_{A}(r)\log
r-\alpha_{A}(r^{\prime})\log r^{\prime}}{\alpha_{B}(r)\log r-\alpha
_{B}(r^{\prime})\log r^{\prime}}\right\vert \leq1+\varepsilon(r_{0}),
\end{equation}
with $\varepsilon(r_{0})\downarrow0$ as $r_{0}\downarrow0.$
\end{proof}

Now we will construct Moran set $B$ with number $t$ such that for any
Ahlfors--David regular set $A$ satisfying $t<\dim_{H}A<\dim_{H}B,$ the
inequality (\ref{zip}) fails.

\begin{proof}
[Proof of Proposition \ref{P:A<B}]$\ $

Let $t=\log3/\log6,$ $c_{k}\equiv1/6,$ $k_{m}=m^{3}$ and $t_{m}=k_{m}+m$ for
all $m.$ We take%
\[
n_{k}=\left\{
\begin{array}
[c]{ll}%
3 & \text{if }k\in\lbrack k_{m}+1,t_{m}]\text{ for some }m,\\
5 & \text{otherwise.}%
\end{array}
\right.
\]
Let $B\in\mathcal{M}([0,1],\{n_{k}\}_{k},\{c_{k}\}_{k}).$ Then it follows from
Propositions \ref{P:Prop of Homo} and \ref{P:Mor is Homo} that $B$ is
homogeneous with
\[
\dim_{H}B=\dim_{P}B=\lim_{k\rightarrow\infty}\frac{\log n_{1}\cdots n_{k}%
}{-\log c_{1}\cdots c_{k}}=\frac{\log5}{\log6}.
\]
Furthermore, assume that for every $i_{1}\cdots i_{k-1}\in\Omega_{k-1}$, the
subintervals
\[
J_{i_{1}\cdots i_{k-1}1},\cdots,J_{i_{1}\cdots i_{k-1}n_{k}}%
\]
are uniformly distributed in $J_{i_{1}\cdots i_{k-1}}$ from left to right.
Then $J_{i_{1}\cdots i_{k-1}(\frac{n_{k}+1}{2})}$ and $J_{i_{1}\cdots i_{k-1}%
}$ have the same middle point $y_{i_{1}\cdots i_{k-1}}$.

For the Moran measure $\nu$, we calculate that
\begin{equation}
\frac{\nu(B(y_{i_{1}\cdots i_{k_{m}}},(1/6)^{k_{m}}/2))}{\nu(B(y_{i_{1}\cdots
i_{k_{m}}},(1/6)^{t_{m}}/2))}=3^{m}. \label{xl}%
\end{equation}
Suppose $A$ is Ahlfors--David $s$-regular with $s\in(\log3/\log6,\log
5/\log6).$ Then for any $x\in A,$
\begin{equation}
\frac{\mu(B(x,(1/6)^{k_{m}}/2))}{\mu(B(x,(1/6)^{t_{m}}/2))}\geq\xi(6^{m})^{s}
\label{xl2}%
\end{equation}
for some constant $\xi.$

If (\ref{zip}) were true, by (\ref{xl}) and (\ref{xl2}) we would obtain that
$s\leq\log3/\log6.$ It is a contradiction.
\end{proof}

\subsection{Proof of embedding theorem}

\

Before the proof of Theorem \ref{T:A emb B}, we give a technical lemma as follows.

Suppose $B$ is homogeneous with the Borel measure $\nu.$ Let
\[
\overline{\nu}(r)=\sup_{x\in B}\nu(x,r)\text{ and }\underline{\nu}%
(r)=\inf_{x\in B}\nu(x,r).
\]

\begin{lemma}
Suppose that $A$ and $B$ are homogeneous sets. For any $\varepsilon>0$ and $\eta>0$ small enough, let $E(\eta)\in
\mathcal{M}(J,\{n_{k}\}_{k},\{c_{k}\}_{k})$ be the Moran set constructed in the proof of
Theorem $\ref{T:E Emb A}$, which is bilipschitz equivalent to $A(\eta)\subset A$. If
\[
P(B,\eta)\geq n_{1}\text{ and }\frac{\underline{\nu}(\eta^{k-1}/2)}%
{\overline{\nu}(2\eta^{k})}\geq n_{k}\text{ for all }k\geq2,
\]
then $E(\eta)\hookrightarrow B$, and thus $A(\eta)\hookrightarrow B$.
\end{lemma}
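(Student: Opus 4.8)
The plan is to transcribe Step~1 and Step~2 of the proof of Theorem~\ref{T:E Emb A}, but with the nested family of balls built inside $B$ (equipped with its measure $\nu$) rather than inside $A$; the two hypotheses are exactly what guarantee that this transcription goes through. Recall that $E(\eta)\in\mathcal{M}(J,\{n_k\}_k,\{c_k\}_k)$ has $c_k\equiv\eta$ and branching numbers $n_k$, and that each $i_1i_2\cdots\in\Omega^{\infty}$ codes the point $y_{i_1i_2\cdots}$ with $\{y_{i_1i_2\cdots}\}=\bigcap_k J_{i_1\cdots i_k}$. First I would construct, by induction on $k$, centers $z_{i_1\cdots i_k}\in B$ for $i_1\cdots i_k\in\Omega_k$ such that $z_{i_1\cdots i_{k-1}i_k}\in B(z_{i_1\cdots i_{k-1}},\eta^{k-1}/2)\cap B$ for $1\le i_k\le n_k$ and the balls $\{B(z_{i_1\cdots i_{k-1}i_k},\eta^k)\}_{i_k=1}^{n_k}$ are pairwise disjoint. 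For $k=1$ the hypothesis $P(B,\eta)\ge n_1$ provides $n_1$ disjoint $\eta$-balls with centers in $B$. For the inductive step, fix $i_1\cdots i_{k-1}$ and take a maximal collection of disjoint $\eta^k$-balls with centers in $B(z_{i_1\cdots i_{k-1}},\eta^{k-1}/2)\cap B$, of cardinality $P_{i_1\cdots i_{k-1}}$; by maximality the concentric $2\eta^k$-balls cover $B(z_{i_1\cdots i_{k-1}},\eta^{k-1}/2)\cap B$ (the inequality $N(\cdot,2r)\le P(\cdot,r)$ from (\ref{E:N<M})), hence
\[
\underline{\nu}(\eta^{k-1}/2)\le\nu\bigl(B(z_{i_1\cdots i_{k-1}},\eta^{k-1}/2)\bigr)\le P_{i_1\cdots i_{k-1}}\,\overline{\nu}(2\eta^k),
\]
so $P_{i_1\cdots i_{k-1}}\ge\underline{\nu}(\eta^{k-1}/2)/\overline{\nu}(2\eta^k)\ge n_k$ and we may keep any $n_k$ of these balls, calling their centers $z_{i_1\cdots i_{k-1}1},\dots,z_{i_1\cdots i_{k-1}n_k}$.

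Since $\eta$ is small (in particular $\eta<1/4$), as in (\ref{E:eta 3/4 eta}) every level-$k$ ball lies in $B(z_{i_1\cdots i_{k-1}},\tfrac34\eta^{k-1})$, so its diameter tends to $0$ and for each $i_1i_2\cdots\in\Omega^{\infty}$ there is a unique point $z_{i_1i_2\cdots}\in\bigcap_k B(z_{i_1\cdots i_k},\eta^k)\subset B$. Define $g\colon E(\eta)\to B$ by $g(y_{i_1i_2\cdots})=z_{i_1i_2\cdots}$. To check that $g$ is bilipschitz, take distinct $y'=y_{i_1\cdots i_{k-1}i_k\cdots}$ and $y''=y_{i_1\cdots i_{k-1}j_k\cdots}$ with $i_k\ne j_k$. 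On the $E(\eta)$ side, the estimate (\ref{t9}) from the proof of Theorem~\ref{T:E Emb A} gives $c\eta^k\le|y'-y''|\le\eta^{k-1}$. On the $B$ side, $g(y')$ and $g(y'')$ lie in $B(z_{i_1\cdots i_{k-1}i_k},\tfrac34\eta^k)$ and $B(z_{i_1\cdots i_{k-1}j_k},\tfrac34\eta^k)$ respectively, both inside $B(z_{i_1\cdots i_{k-1}},\tfrac34\eta^{k-1})$; since the two $\eta^k$-balls are disjoint, $\mathrm{d}(g(y'),z_{i_1\cdots i_{k-1}j_k})>\eta^k$, whence $\tfrac14\eta^k\le\mathrm{d}(g(y'),g(y''))\le\tfrac32\eta^{k-1}$, exactly as in (\ref{t99}). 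Comparing the two displays shows that $g$ is bilipschitz, so $E(\eta)\hookrightarrow B$; composing with the bilipschitz bijection $f\colon E(\eta)\to A(\eta)$ furnished by Theorem~\ref{T:E Emb A} then gives $A(\eta)\hookrightarrow B$.

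Since this is essentially a line-by-line transcription of the construction in Theorem~\ref{T:E Emb A}, I do not expect a deep obstacle; the only step that genuinely uses the hypotheses is the passage from a maximal $\eta^k$-packing of $B(z_{i_1\cdots i_{k-1}},\eta^{k-1}/2)\cap B$ to a $2\eta^k$-covering of it, which is precisely what turns the assumption $\underline{\nu}(\eta^{k-1}/2)/\overline{\nu}(2\eta^k)\ge n_k$ into the existence of enough children, together with the (elementary but easy to get wrong) separation bound $\mathrm{d}(g(y'),g(y''))\ge\tfrac14\eta^k$ extracted from disjointness of the closed balls. One should also record in the statement that ``$\eta$ small enough'' is meant to subsume $\eta<1/4$ and whatever smallness was imposed in the proof of Theorem~\ref{T:E Emb A}, so that (\ref{E:eta 3/4 eta}), (\ref{t9}) and (\ref{t99}) remain valid verbatim here.
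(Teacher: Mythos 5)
Your proposal is correct and is exactly the argument the paper intends: the lemma is stated without an explicit proof precisely because it follows by rerunning Steps 1--2 of the proof of Theorem \ref{T:E Emb A} inside $B$ with the measure $\nu$, the two hypotheses being what replaces the packing/covering estimates $P_A\geq n_1$ and $P_{i_1\cdots i_{k-1}}\geq \underline{\mu}(\eta^{k-1}/2)/\overline{\mu}(2\eta^{k})\geq n_k$, and the bilipschitz bounds $\tfrac14\eta^{k}\leq \mathrm{d}(g(y'),g(y''))\leq\tfrac32\eta^{k-1}$ are obtained exactly as in $(\ref{t99})$. Your closing caveats (disjointness of closed balls giving the $\tfrac14\eta^k$ separation, and $\eta<1/4$ so that $(\ref{E:eta 3/4 eta})$ applies) are the right points to record.
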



We turn to the proof of Theorem \ref{T:A emb B}.

\begin{proof}
[Proof of the first part of Theorem \ref{T:A emb B}]\

Without loss of generality, we may assume that $|A|=|B|=1.$ Let $\eta_{1}$ be
defined in (\ref{txt2}). Using the above lemma, by (\ref{E:nk geq 2}) it
suffices to show that if $\eta(\leq\eta_{1})$ is small enough, then%
\begin{equation}
n_{1}=P_{A}\leq P_{B}=P(B,\eta) \label{jsj}%
\end{equation}
and
\begin{equation}
\frac{\overline{\mu}(\eta^{k-1})}{\underline{\mu}(\eta^{k})}\leq
\frac{\underline{\nu}(\eta^{k-1}/2)}{\overline{\nu}(2\eta^{k})}\text{ for all
}k\geq\text{2}. \label{jsj2}%
\end{equation}

To obtain (\ref{jsj}), noticing that $P_{A}\asymp\frac{\mu(A)}{\mu(B(x^{\ast
},\eta))}$ and $P_{B}\asymp\frac{\nu(B)}{\nu(B(y^{\ast},\eta))}$, we only need
to check that
\[
\limsup_{\eta\rightarrow0}\left\vert \frac{\log\mu(B(x^{\ast},\eta))}{\log
\nu(B(y^{\ast},\eta))}\right\vert =\limsup_{\eta\rightarrow0} \frac{\alpha
_{A}(\eta)}{\alpha_{B}(\eta)} <1,
\]
which follows from (\ref{rar}) by fixing $r$ and letting $r^{\prime
}\rightarrow0$.

To obtain (\ref{jsj2}), note that $\frac{\overline{\mu}(\eta^{k-1}%
)}{\underline{\mu}(\eta^{k})}\asymp\frac{\mu(B(x^{\ast},\eta^{k-1}))}%
{\mu(B(x^{\ast},\eta^{k}))}$ and $\frac{\underline{\nu}(\eta^{k-1}%
/2)}{\overline{\nu}(2\eta^{k})}\asymp\frac{\nu(B(y^{\ast},\eta^{k-1}))}%
{\nu(B(y^{\ast},\eta^{k}))}$, we only need to find $\eta_{2}$ such that
\begin{equation}
\sup_{k\geq1,\eta<\eta_{2}}\left\vert \frac{\log\mu(B(x^{\ast},\eta
^{k-1}))-\log\mu(B(x^{\ast},\eta^{k}))}{\log\nu(B(y^{\ast},\eta^{k-1}%
))-\log\nu(B(y^{\ast},\eta^{k}))}\right\vert <1. \label{E:t5}%
\end{equation}
In fact, as at the end of the proof of Lemma \ref{L:blip}, we get (\ref{E:t5})
by using (\ref{rar}).
\end{proof}

In order to prove the second part, we need the following easily proved key property
\cite{Mattila Saarenen} by Mattila and Saaranen on the decomposition of a
uniformly disconnected set. The reader can refer to \cite{Wang Xi N} for a proof.

\begin{lemma}
\label{L:Uni Dis}\cite{Mattila Saarenen} Suppose $A$ is a uniformly disconnected compact subset of a metric space
with constants $C>1$ and $r^{\ast}>0$ in $(\ref{E:ud}).$ If $E$ is a subset of $A$ and $0<r<r^{\ast}$ a number satisfying $\mathrm{d}(E,A\backslash E)>Cr$, then there are sets
$\{E_{i}\}_{i=1}^{m}$ and balls $\{B(x_{i},r)\}_{i=1}^{m}$ satisfying

\begin{enumerate}
\item $E=\bigcup_{i=1}^{m}E_{i};$

\item $\mathrm{d}(E_{i},E_{j})>r$ for all $i\neq j;$

\item $x_{i}\in E_{i}$ and $E\cap B(x_{i},r)\subset E_{i}\subset B(x_{i},Cr)$ for
all $i.$
\end{enumerate}
\end{lemma}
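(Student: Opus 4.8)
The plan is a greedy peeling argument. I would extract the pieces $E_1,E_2,\dots$ one at a time: at stage $i$ pick a point $x_i$ of the part of $E$ not yet assigned, apply the uniform disconnectedness of $A$ at scale $r$ around $x_i$ (legitimate since $r<r^{\ast}$) to obtain a set $G_i\subset A$ with $A\cap B(x_i,r)\subset G_i\subset B(x_i,Cr)$ and $\mathrm{d}(G_i,A\setminus G_i)>r$, and set $E_i:=(E\setminus(E_1\cup\cdots\cup E_{i-1}))\cap G_i$. Writing $R_i:=E\setminus(E_1\cup\cdots\cup E_i)$ for the running remainder, the point $x_i$ is chosen in $R_{i-1}$, and then $x_i\in E_i$.

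The whole verification rests on one inductive separation estimate. Since $E_i\subset G_i$ while $R_i\subset A\setminus G_i$, we get $\mathrm{d}(E_i,R_i)\ge\mathrm{d}(G_i,A\setminus G_i)>r$; because $E_j\subset R_i$ whenever $j>i$, this is exactly property (2). Applying the same estimate at the earlier stages shows $x_i$ lies at distance $>r$ from every $E_\ell$ with $\ell<i$, so the closed ball $B(x_i,r)$ misses $E_1\cup\cdots\cup E_{i-1}$; combined with $A\cap B(x_i,r)\subset G_i$ this forces $E\cap B(x_i,r)\subset E_i$, while $E_i\subset G_i\subset B(x_i,Cr)$ is immediate --- this is property (3). (The hypothesis $\mathrm{d}(E,A\setminus E)>Cr$ is convenient, since it gives $G_i\subset E$, but it is not really needed once one intersects with the current remainder.) Finally, property (2) forces the centers $\{x_i\}$ to be an $r$-separated subset of the compact, hence totally bounded, set $A$, so only finitely many --- say $m$ --- can be produced; then $R_m=\varnothing$, i.e.\ $E=\bigcup_{i=1}^{m}E_i$, which is property (1).

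I expect the only genuinely delicate point to be the bookkeeping of the two simultaneous separation demands at each stage: the fresh piece $E_i$ has to be separated both from the pieces already removed and from the portion of $E$ still to be processed, and one must notice that precisely these separations make the centers $r$-separated and hence the recursion finite. Everything else is a routine unwinding of the inclusions in $(\ref{E:ud})$.
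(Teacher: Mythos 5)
Your greedy peeling argument is correct and complete: the single inductive estimate $\mathrm{d}(E_i,R_i)\ge\mathrm{d}(G_i,A\setminus G_i)>r$ (with $R_i$ the running remainder) yields both the mutual separation of the pieces and the fact that each later center $x_i$ stays at distance $>r$ from all earlier pieces, which gives $E\cap B(x_i,r)\subset E_i$, and termination follows from the $r$-separation of the centers in the compact set $A$. The paper itself does not prove this lemma but defers to \cite{Mattila Saarenen} and \cite{Wang Xi N}, where essentially this same exhaustion argument appears; your parenthetical observation that the hypothesis $\mathrm{d}(E,A\setminus E)>Cr$ is not actually needed for the statement itself (it only matters when the lemma is iterated) is also accurate.
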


Suppose $\{r_{k}\}_{k\geq1}$ is a sequence of positive numbers decreasing to
zero with $r_{1}<r^{\ast}$ and $r_{k}/r_{k+1}>C$ for all $k\geq1.$ We shall
give a decomposition of the uniformly disconnected set $A$ with respect to
$\{r_{k}\}_{k\geq1}.$

Set $\Lambda_{0}=\{\emptyset\}$ with empty word $\emptyset.$ Using Lemma
\ref{L:Uni Dis} with $E=A$ and $r=r_{1}$ we get sets $\{A_{i_{1}}\}_{i_{1}%
=1}^{m_{A}}$ and balls $\{B(x_{i_{1}},r_{1})\}_{i_{1}=1}^{m_{A}}$ satisfying

\begin{enumerate}
\item $A=\bigcup_{i_{1}=1}^{m_{A}}A_{i_{1}};$

\item $\mathrm{d}(A_{i_{1}},A_{j_{1}})>r_{1}(>Cr_{2})$ for all $i_{1}\neq
j_{1};$

\item $x_{i_{1}}\in A_{i_{1}}$ and $A\cap B(x_{i_{1}},r_{1})\subset A_{i_{1}}\subset
B(x_{i_{1}},Cr_{1})$ for all $i_{1}.$
\end{enumerate}

Set $m_{\emptyset}=m_{A}$ and $\Lambda_{1}=\{1,2,\cdots,m_{\emptyset}\}.$

For $k\geq2,$ assume that for $k-1$ we have got the sets $\{A_{i_{1}\cdots
i_{k-1}}\}_{i_{1}\cdots i_{k-1}\in\Lambda_{k-1}}$ and balls $\{B(x_{i_{1}%
\cdots i_{k-1}},r_{k-1})\}_{i_{1}\cdots i_{k-1}\in\Lambda_{k-1}}.$ By
induction, we can do the same work to every $A_{i_{1}\cdots i_{k-1}}$ with
$r=r_{k}.$ Let $\Lambda_{k}=\{i_{1}\cdots i_{k}:i_{j}\in\mathbb{N}\cap
\lbrack1,m_{i_{1}\cdots i_{j-1}}]$ for all $1\leq j\leq k\}.$

Since $r_{k}/r_{k+1}>C,$ using Lemma \ref{L:Uni Dis} again and again, we get
the decomposition of $A.$ There exist sets $A_{i_{1}\cdots i_{k}}$ and points
$x_{i_{1}\cdots i_{k}}$ such that for all $k\geq1,$
\begin{align*}
&  A=\bigcup_{i_{1}\cdots i_{k}\in\Lambda_{k}}A_{i_{1}\cdots i_{k}},\\
&  \mathrm{d}(A_{i_{1}\cdots i_{k-1}i_{k}},A_{i_{1}\cdots i_{k-1}j_{k}}%
)>r_{k}(>C r_{k+1})\text{ if }i_{k}\neq j_{k},\\
&  A_{i_{1}\cdots i_{k}i_{k+1}}\subset A_{i_{1}\cdots i_{k}},\\
&  x_{i_{1}\cdots i_{k}}\in A_{i_{1}\cdots i_{k}},\\
&  A\cap B(x_{i_{1}\cdots i_{k}},r_{k})\subset A_{i_{1}\cdots i_{k}}\subset
B(x_{i_{1}\cdots i_{k}},Cr_{k}).
\end{align*}
We denote $\Lambda=\bigcup_{k\geq0}\Lambda_{k}.$

\begin{proof}
[Proof of the second part of Theorem \ref{T:A emb B}]\

For any $\eta\in(0, \min(1/C,r^{\ast})),$ we get the decomposition of $A$ with
respect to $\{\eta^{k}\}_{k\geq1}.$ Note that $m_{A}\leq\frac{\mu
(A)}{\underline{\mu}(\eta)}$ and $m_{i_{1}\cdots i_{k}}\leq\frac{\overline
{\mu}(C\eta^{k})}{\underline{\mu}(\eta^{k+1})}$ for all $i_{1}\cdots i_{k}%
\in\Lambda\backslash\Lambda_{0}$. Hence, as in the first part of the proof,
there exists an $\eta_{3}\in(0,1/C),$ such that if $\eta$ satisfies $\eta
<\min\{\eta_{1},\eta_{3}\},$ then%
\[
m_{A}\leq P(B,\eta)\text{ and }\frac{\overline{\mu}(C\eta^{k-1})}%
{\underline{\mu}(\eta^{k})}\leq\frac{\underline{\nu}(\eta^{k-1}/2)}%
{\overline{\nu}(2\eta^{k})}\text{ for all }k\geq2.
\]

Corresponding to the decomposition of $A,$ we get a collection $\{B(y_{i_{1}%
\cdots i_{k}},\eta^{k})\}_{i_{1}\cdots i_{k}\in\Lambda\backslash\Lambda_{0}}$
of balls in $B$ as in Step 1 in the proof of Theorem \ref{T:E Emb A}, satisfying

\begin{enumerate}
\item For every $i_{1}\in\Lambda_{1},$ $y_{i_{1}}\in B$, and $B(y_{i_{1}}%
,\eta)\cap B(y_{j_{1}},\eta)=\varnothing$ for all $i_{1}\neq j_{1}$;

\item When $k\geq2,$ for every $i_{1}\cdots i_{k-1}\in\Lambda_{k-1},$
\end{enumerate}

\begin{itemize}
\item $y_{i_{1}\cdots i_{k-1}i_{k}}\in B(y_{i_{1}\cdots i_{k-1}},\eta
^{k-1}/2)\cap B$ for all $1\leq i_{k}\leq m_{i_{1}\cdots i_{k-1}}$;

\item $B(y_{i_{1}\cdots i_{k-1}i_{k}},\eta^{k})\cap B(y_{i_{1}\cdots
i_{k-1}j_{k}},\eta^{k})=\varnothing$ for all $i_{k}\neq j_{k}.$
\end{itemize}

We define
\[
B(\eta)=\bigcap\nolimits_{k\geq1}\bigcup\nolimits_{i_{1}\cdots i_{k}\in
\Lambda_{k}}B(y_{i_{1}\cdots i_{k}},\eta^{k}).
\]
Noting that
\[
A=\bigcap\nolimits_{k\geq1}\bigcup\nolimits_{i_{1}\cdots i_{k}\in\Lambda_{k}%
}A_{i_{1}\cdots i_{k}},
\]
we can check as in Step 2 in the proof of Theorem \ref{T:E Emb A} that the
natural bijection between $A$ and $B(\eta)$ is bilipschitz.
\end{proof}

\subsection{Uniform disconnectedness}

\

In the following proof, we use the idea of \cite{Mattila Saarenen} by Mattila
and Saaranen.

\begin{proof}
[Proof of Lemma \ref{L:<1 Uni dis}]$\ $

By (\ref{E:equ}), we may assume that there exists $r_{0}\in(0,1)$ such that
\begin{equation}
\sup_{r^{\prime}<r_{0}r^{\prime\prime}<r^{\prime\prime}<r_{0}}\left\vert
\frac{\log\overline{\mu}(r^{\prime\prime})-\log\underline{\mu}(r^{\prime}%
)}{\log r^{\prime\prime}-\log r^{\prime}}\right\vert \leq1-\gamma\text{ with
}\gamma>0. \label{E:jsj10}%
\end{equation}

Take an integer $l$ large enough such that
\begin{equation}
\frac{\log l-\log3}{\log(l+2)}>1-\gamma\text{ and }\frac{1}{l+2}<r_{0}.
\label{E:jsj11}%
\end{equation}

For any $r<r_{0}/(l+2)$ and $x\in A,$ let
\[
B_{0}=B(x,r),\text{ \ \ }B_{i}=B(x,(i+1)r)\backslash B(x,ir)\text{ }(1\leq
i\leq l+1).
\]
As in \cite{Mattila Saarenen}, we only need to verify

\begin{claim}
There must be an $i_{0}\in\{1,\cdots,l\}$ such that $A\cap B_{i_{0}%
}=\varnothing.$
\end{claim}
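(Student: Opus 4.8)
The plan is to argue by contradiction, assuming that $A\cap B_i\neq\varnothing$ for every $i\in\{1,\dots,l\}$, and then to derive a violation of the dimension estimate \eqref{E:jsj10}. The point is that if each annulus $B_i$ meets $A$, we can pick a point $z_i\in A\cap B_i$ for $1\le i\le l$, together with the center $x\in A$; these $l+1$ points are pairwise separated by at least $r$ (consecutive annuli are at distance $\ge r$ apart, and in fact $\mathrm{d}(z_i,z_j)\ge (|i-j|-1)r\ge r$ when $|i-j|\ge 2$, while for $|i-j|=1$ one uses $z_i\in B(x,(i+1)r)\setminus B(x,ir)$, so $\mathrm{d}(z_i,z_j)\ge r$ after possibly relabeling, or one simply takes every other annulus to be safe — e.g. the points $x$ and $z_2,z_4,\dots$, giving on the order of $l/2$ well-separated points; the constant $l$ is chosen large enough to absorb this factor of $2$). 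All of these points lie in $B(x,(l+1)r)\subset B(x,(l+2)r)$.

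Next I would translate this separation into a lower bound on $\mu(B(x,(l+2)r))$ relative to $\mu(B(x,r))$. Since the balls $\{B(z_i,r/2)\}$ are pairwise disjoint (their centers are $r$-separated) and each is contained in $B(x,(l+2)r)$, and since $\mu(B(z_i,r/2))\ge \underline{\mu}(r/2)\ge c\,\underline{\mu}(r)$ for a uniform constant $c>0$ coming from the doubling property \eqref{***} together with \eqref{cc}, we get
\[
\mu(B(x,(l+2)r))\ \ge\ (l+1)\,c\,\underline{\mu}(r)\ \ge\ \frac{l}{3}\,\mu(B(x,r))
\]
for $r$ small (here the $l/3$ rather than $l+1$ leaves room for the comparability constants and the possible factor $2$ from choosing alternate annuli). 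This says $\overline{\mu}((l+2)r)/\underline{\mu}(r)\ge l/3$, i.e.
\[
\frac{\log\overline{\mu}((l+2)r)-\log\underline{\mu}(r)}{\log r}\ \le\ \frac{\log\underline{\mu}(r)-\log(l/3)-\log\underline{\mu}(r)}{\log r}
\]
— more cleanly, taking $r''=(l+2)r$ and $r'=r$ in \eqref{E:jsj10} (valid since $r'<r_0 r''\Leftrightarrow r<r_0(l+2)r$, which holds, and $r''<r_0$ by the choice $r<r_0/(l+2)$), the left side of \eqref{E:jsj10} is at least
\[
\frac{\log(l/3)}{\log(l+2)}\cdot\frac{1}{1}\ \ \text{(up to the additive }O(1/|\log r|)\text{ from }\alpha_A\sim\log\mu(B(\cdot,r))/\log r\text{),}
\]
which for $r$ small enough exceeds $\dfrac{\log l-\log 3}{\log(l+2)}>1-\gamma$ by the first inequality in \eqref{E:jsj11}. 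This contradicts \eqref{E:jsj10}, proving the claim.

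The main obstacle — really the only delicate point — is bookkeeping the various comparability constants ($\lambda_A$ from \eqref{cc}, the doubling constant from \eqref{***}, the passage between $\mu(B(x,r))$ and $\underline{\mu}(r),\overline{\mu}(r)$, and the factor-of-$2$ from possibly using only alternate annuli) so that they are all swallowed by choosing $l$ large in \eqref{E:jsj11} and then $r$ small; none of these constants depends on $x$ or $r$, so this is legitimate, but it must be done carefully to ensure the exponent genuinely crosses the threshold $1-\gamma$. Once the claim holds, the standard argument of Mattila–Saaranen finishes the proof of uniform disconnectedness: the annulus $B_{i_0}$ with $A\cap B_{i_0}=\varnothing$ separates $A\cap B(x,i_0 r)$ from the rest of $A$ at distance $\ge r$, and taking $E=A\cap B(x,i_0 r)$, $C=l+2$, $r^\ast$ as above yields \eqref{E:ud}.
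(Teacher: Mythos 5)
Your proposal is correct in its overall strategy and matches the paper's: assume every annulus $B_{1},\dots,B_{l}$ meets $A$, convert that into a lower bound of order $l$ for $\overline{\mu}((l+2)r)/\underline{\mu}(r)$, and contradict \eqref{E:jsj10} via \eqref{E:jsj11}. The difference is in the counting mechanism. You use a packing argument: one point per annulus, pass to every other annulus to guarantee $r$-separation (you are right that adjacent annuli do \emph{not} give $r$-separated points, so the "after possibly relabeling" fallback is not available and the alternate-annulus fix is genuinely needed), then disjoint balls of radius $r/2$, which costs a factor $2$ in the count and a doubling constant $c$ from $\underline{\mu}(r/2)\geq c\,\underline{\mu}(r)$. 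The paper instead keeps all $l$ points and the full radius $r$, observes $B(x_{i},r)\subset B_{i-1}\cup B_{i}\cup B_{i+1}$, and uses the fact that each annulus occurs in at most three of these triple unions to get $l\,\underline{\mu}(r)\leq 3\,\overline{\mu}((l+2)r)$ directly. That bounded-overlap trick sidesteps both of your complications and is exactly why the constant $3$ appears in \eqref{E:jsj11}; with your route the inequality $\frac{\log l-\log 3}{\log(l+2)}>1-\gamma$ does not literally suffice and $l$ must be re-chosen so that $\frac{\log(cl/2)}{\log(l+2)}>1-\gamma$, which is harmless (the quotient still tends to $1$ as $l\to\infty$) but, as you note, requires restating the choice of $l$. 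Also, your contradiction should be taken directly against \eqref{E:jsj10}, which is already phrased in terms of $\log\overline{\mu}$ and $\log\underline{\mu}$, so no additional $O(1/|\log r|)$ correction or "for $r$ small enough" is needed beyond $r<r_{0}/(l+2)$.
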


Otherwise, there exists $x_{i}\in$ $A\cap B_{i}$ whenever $1\leq i\leq l$.
Then
\begin{align*}
l\underline{\mu}(r)\leq\sum_{i=1}^{l}\mu(B(x_{i},r))  &  \leq\sum_{i=1}^{l}%
\mu(B_{i-1}\cup B_{i}\cup B_{i+1})\\
&  \leq3\mu(B(x,(l+2)r))\leq3\overline{\mu}((l+2)r).
\end{align*}
Therefore,
\[
1-\gamma<\frac{\log l-\log3}{\log(l+2)}\leq\left\vert \frac{\log\overline{\mu
}((l+2)r)-\log\underline{\mu}(r)}{\log(l+2)r-\log r}\right\vert \leq1-\gamma.
\]
This is a contradiction. Then the claim is proved, and thus (\ref{E:ud}) holds
with $C=l.$ That means $A$ is uniformly disconnected.
\end{proof}

We will construct a Moran set $E$ such that $\dim_{H}E=\dim_{P}E<1$ but $E$ is
not uniformly disconnected.

\begin{example}
\label{Ex:ud}

Let $k_{m}=m^{3}$ and $t_{m}=k_{m}+m$ for all $m.$ We take%
\[
(n_{k},c_{k})=\left\{
\begin{array}
[c]{ll}%
(3,1/3-1/(6m)) & \text{if }k\in\lbrack k_{m}+1,t_{m}]\text{ for some }m,\\
(3,1/6) & \text{otherwise.}%
\end{array}
\right.
\]
Let $E\in\mathcal{M}([0,1],\{n_{k}\}_{k},\{c_{k}\}_{k}).$ Then it follows from
Propositions $\ref{P:Mor is Homo}$ and $\ref{P:Prop of Homo}$ that $E$ is
homogeneous with
\[
\dim_{H}E=\dim_{P}E=\lim_{k\rightarrow\infty}\frac{\log n_{1}\cdots n_{k}%
}{-\log c_{1}\cdots c_{k}}=\frac{\log3}{\log6}<1.
\]
Assume that for every word $i_{1}\cdots i_{k-1}$, the subintervals
$J_{i_{1}\cdots i_{k-1}1},\cdots,J_{i_{1}\cdots i_{k-1}n_{k}}$ are uniformly
distributed in $J_{i_{1}\cdots i_{k-1}}$ from left to right. If we consider
the middle point $1/2$ and the largest gap in the interval $J_{i_{1}\cdots
i_{k_{m}}}$ with $1/2\in J_{i_{1}\cdots i_{k_{m}}},$ then, since
$1-3c_{(k_{m}+1)}\rightarrow0$ as $m\rightarrow\infty,$ we clearly see that we
can not find a uniform disconnectedness constant $C > 1.$
\end{example}

\section{Quasi-Lipschitz equivalence of homogeneous sets}

In this section, we will prove Theorem \ref{T:quasi-lip}. Without loss of
generality, we always assume that $A=X$ and $B=Y.$ We say that when
$r,r^{\prime}\rightarrow0,$
\[
g(r,r^{\prime})\rightarrow a\Leftrightarrow\bar{g}(r,r^{\prime})\rightarrow
b,
\]
if for any $\epsilon>0$ there exists an $\eta>0$ such that $\left\vert \bar
{g}(r,r^{\prime})-b\right\vert <\epsilon$ whenever $\max(\left\vert
g(r,r^{\prime})-a\right\vert ,|r|,|r^{\prime}|)<\eta$ and such that
$\left\vert g(r,r^{\prime})-a\right\vert <\epsilon$ whenever $\max(\left\vert
\bar{g}(r,r^{\prime})-b\right\vert ,|r|,|r^{\prime}|)<\eta$.

\begin{lemma}
\label{L:r/r=1}For any $x\in A,$ when $r,r^{\prime}\rightarrow0,$
\[
\frac{\log r^{\prime}}{\log r}\rightarrow1\Leftrightarrow\frac{\log
\mu(B(x,r^{\prime}))}{\log\mu(B(x,r))}\rightarrow1.
\]

\end{lemma}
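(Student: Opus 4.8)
The plan is to prove both implications simultaneously by sandwiching $\dfrac{\log\mu(B(x,r'))}{\log\mu(B(x,r))}-1$ between constant multiples of $\dfrac{\log r'}{\log r}-1$, up to an additive error that is $O(1/|\log r|)$. Since $x\in\operatorname{supp}\mu$ we have $\mu(B(x,\rho))\in(0,1]$ for all $\rho>0$, so every logarithm below is finite and $\le 0$; and since both conditions in the statement are invariant under interchanging $r$ and $r'$ (because $\frac{\log r'}{\log r}\to1$ iff $\frac{\log r}{\log r'}\to1$, and likewise for the measure ratios), I may assume throughout that $r'\le r$, the case $r'=r$ being trivial.

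First I would recall, exactly as in the proof of Proposition~\ref{P:Prop of Homo} leading to (\ref{E:alpha 0 infty}) but applied at the single point $x$, that iterating (\ref{***}) from scale $|A|$ down to scale $r$ produces constants $0<a\le b<\infty$ and $r_1\in(0,|A|)$ with
\[
a\,|\log r|\le |\log\mu(B(x,r))|\le b\,|\log r|\qquad(0<r\le r_1).
\]
Next I would iterate (\ref{***}) from scale $r$ down to scale $r'$: picking the integer $n\ge0$ with $\kappa_A^{\,n+1}r<r'\le\kappa_A^{\,n}r$ and using monotonicity of $\rho\mapsto\mu(B(x,\rho))$,
\[
\delta_A^{\,n}\le\frac{\mu(B(x,r))}{\mu(B(x,r'))}\le\Delta_A^{\,n+1},\qquad\text{so}\qquad n\log\delta_A\le|\log\mu(B(x,r'))|-|\log\mu(B(x,r))|\le(n+1)\log\Delta_A .
\]
The defining inequalities for $n$ give $n=\dfrac{|\log r'|-|\log r|}{|\log\kappa_A|}+\theta$ with $\theta\in(-1,0]$, so the quantity $|\log\mu(B(x,r'))|-|\log\mu(B(x,r))|$ equals $\dfrac{|\log r'|-|\log r|}{|\log\kappa_A|}$ up to a bounded multiplicative factor and a bounded additive error.

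Combining the two displays, dividing through by $|\log\mu(B(x,r))|$, and using $\dfrac{|\log r'|-|\log r|}{|\log r|}=\dfrac{\log r'}{\log r}-1$, I obtain constants $0<c\le C$ and $D>0$ such that for all $0<r'\le r\le r_1$
\[
c\Bigl(\tfrac{\log r'}{\log r}-1\Bigr)-\frac{D}{|\log r|}\ \le\ \frac{\log\mu(B(x,r'))}{\log\mu(B(x,r))}-1\ \le\ C\Bigl(\tfrac{\log r'}{\log r}-1\Bigr)+\frac{D}{|\log r|},
\]
where all three expressions are $\ge0$ because $r'\le r$. This yields the lemma in the sense defined just above it: if $\frac{\log r'}{\log r}\to1$ while $r\to0$, the right-hand bound forces the middle term to $0$; conversely, if the middle term $\to0$ while $r\to0$, the left-hand bound forces $\frac{\log r'}{\log r}-1\to0$.

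The routine calculations are the two iterations of (\ref{***}); the only point requiring care is sign-bookkeeping — all logarithms are negative, so one must consistently pass to absolute values when converting the estimate on $|\log\mu(B(x,r'))|-|\log\mu(B(x,r))|$ into the estimate on the ratio — together with the check that the additive errors, coming from the floor in the definition of $n$ and from the constant $\mu(A)$ in the size estimate, are genuinely $O(1/|\log r|)$ after division by $|\log\mu(B(x,r))|\asymp|\log r|\to\infty$. I do not expect any substantive obstacle; the homogeneity hypotheses enter only through (\ref{***}) and the dimension bound (\ref{E:alpha 0 infty}).
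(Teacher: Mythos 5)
Your proof is correct and rests on the same two ingredients as the paper's: iterating (\ref{***}) over the $n\approx(|\log r'|-|\log r|)/|\log\kappa_A|$ scales separating $r'$ from $r$ to get $\log\bigl(\mu(B(x,r))/\mu(B(x,r'))\bigr)\asymp n$, and the bound $|\log\mu(B(x,r))|\asymp|\log r|$ coming from (\ref{E:alpha 0 infty}). The paper organizes this by discretizing to the scales $(\kappa_A)^k$ and running a chain of equivalences in the indices $k,k'$, whereas you package it as a single two-sided sandwich inequality; the content is the same, and your quantitative form makes the required uniformity slightly more transparent.
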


\begin{proof}
Suppose that $r\in((\kappa_{A})^{k},(\kappa_{A})^{k-1}]$ and $r^{\prime}%
\in((\kappa_{A})^{k^{\prime}},(\kappa_{A})^{k^{\prime}-1}].$ Then%
\[
\frac{\log r}{k\log\kappa_{A}}\rightarrow1,\frac{\log r^{\prime}}{k^{\prime
}\log\kappa_{A}}\rightarrow1\text{ as }r,r^{\prime}\rightarrow0.
\]
On the other hand, $\mu(B(x,(\kappa_{A})^{k}))\leq\mu(B(x,r))\leq
\mu(B(x,(\kappa_{A})^{k-1}))$ and%
\[
\mu(B(x,(\kappa_{A})^{k}))\geq\Delta_{A}^{-1}\mu(B(x,(\kappa_{A})^{k-1}))
\]
due to (\ref{***}) in Definition \ref{D:Main}. Thus%
\[
\frac{\log\mu(B(x,r))}{\log\mu(B(x,(\kappa_{A})^{k}))}\rightarrow1,\frac
{\log\mu(B(x,r^{\prime}))}{\log\mu(B(x,(\kappa_{A})^{k^{\prime}}))}%
\rightarrow1\text{ as }r,r^{\prime}\rightarrow0.
\]

It suffices to verify that when $k,k^{\prime}\rightarrow\infty,$
\[
\frac{k^{\prime}}{k}\rightarrow1\Leftrightarrow\frac{\log\mu(B(x,(\kappa
_{A})^{k^{\prime}}))}{\log\mu(B(x,(\kappa_{A})^{k}))}\rightarrow1.
\]
For $k>k^{\prime},$ using (\ref{***}), we have $(\delta_{A})^{k-k^{\prime}%
}\leq\frac{\mu(B(x,(\kappa_{A})^{k^{\prime}}))}{\mu(B(x,(\kappa_{A})^{k}%
))}\leq(\Delta_{A})^{k-k^{\prime}},$ i.e.,
\begin{equation}
(k-k^{\prime})\log\delta_{A}\leq\log\frac{\mu(B(x,(\kappa_{A})^{k^{\prime}}%
))}{\mu(B(x,(\kappa_{A})^{k}))}\leq(k-k^{\prime})\log\Delta_{A}. \label{E:un}%
\end{equation}
Using (\ref{E:un}) and $0<\underline{\lim}_{k\rightarrow\infty}\alpha
_{A}(x,(\kappa_{A})^{k})\leq\overline{\lim}_{k\rightarrow\infty}\alpha
_{A}(x,(\kappa_{A})^{k})<\infty$, we have
\begin{align*}
\frac{\log\mu(B(x,(\kappa_{A})^{k^{\prime}}))}{\log\mu(B(x,(\kappa_{A})^{k}%
))}\rightarrow1  &  \Leftrightarrow\frac{\log\frac{\mu(B(x,(\kappa
_{A})^{k^{\prime}}))}{\mu(B(x,(\kappa_{A})^{k}))}}{\log\mu(B(x,(\kappa
_{A})^{k}))}\rightarrow0\\
&  \Leftrightarrow\left(  \frac{k-k^{\prime}}{k}\right)  \frac{\log(\kappa
_{A})^{k}}{\log\mu(B(x,(\kappa_{A})^{k}))}\rightarrow0\\
&  \Leftrightarrow\left(  \frac{k-k^{\prime}}{k}\right)  \frac{1}{\alpha
_{A}(x,(\kappa_{A})^{k})}\rightarrow0\\
&  \Leftrightarrow\frac{k^{\prime}}{k}\rightarrow1.
\end{align*}

\end{proof}

\subsection{Proof of equivalence theorem: necessity}

\

By the definition of quasi-Lipschitz equivalence, we can find a bijection
$f:A\rightarrow B$ and a non-decreasing function $\beta:\mathbb{R}%
^{+}\rightarrow\mathbb{R}^{+}$ with $\lim_{r\rightarrow0}\beta(r)=0$ such that
for every pair of distinct points $x_{1},x_{2}\in A,$%
\begin{equation}
1-\beta(\text{\textrm{d}}_{A}(x_{1},x_{2}))\leq\frac{\log\text{\textrm{d}}%
_{B}(f(x_{1}),f(x_{2}))}{\log\text{\textrm{d}}_{A}(x_{1},x_{2})}\leq
1+\beta(\text{\textrm{d}}_{A}(x_{1},x_{2}))\label{main}%
\end{equation}
and
\begin{equation}
1-\beta(\text{\textrm{d}}_{B}(f(x_{1}),f(x_{2})))\leq\frac{\log
\text{\textrm{d}}_{A}(x_{1},x_{2})}{\log\text{\textrm{d}}_{B}(f(x_{1}%
),f(x_{2}))}\leq1+\beta(\text{\textrm{d}}_{B}(f(x_{1}),f(x_{2}%
))).\label{mainmain}%
\end{equation}

For any $x\in A$ and $r>0$ small enough, we conclude that%
\begin{equation}
B(f(x),r)\subset f(B(x,r^{1-\beta(r)}))\text{ and }f(B(x,r))\subset
B(f(x),r^{1-\beta(r)}).\label{gggg}%
\end{equation}
In fact, we assume that $r$ and $\beta(r)$ are small enough. Firstly, we
verify that $B(f(x),r)\subset f(B(x,r^{1-\beta(r)})).$ For any $f(x^{\prime
})\in B(f(x),r)$ with $f(x^{\prime})\neq f(x)$, we have $0<\text{\textrm{d}%
}_{B}(f(x),f(x^{\prime}))\leq r$. By (\ref{mainmain}), we have
\[
\frac{\log\text{\textrm{d}}_{A}(x,x^{\prime})}{\log\text{\textrm{d}}%
_{B}(f(x),f(x^{\prime}))}\geq1-\beta(\text{\textrm{d}}_{B}(f(x),f(x^{\prime
})))\geq1-\beta(r),
\]
since the function $\beta$ is non-decreasing,
\[
\text{\textrm{d}}_{A}(x,x^{\prime})\leq\left(  \text{\textrm{d}}%
_{B}(f(x),f(x^{\prime}))\right)  ^{1-\beta(r)}\leq r^{1-\beta(r)}.
\]
Then $x^{\prime}\in B(x,r^{1-\beta(r)})$, and thus $B(f(x),r)\subset
f(B(x,r^{1-\beta(r)}))$. In the same way, using (\ref{main}), we have
$f(B(x,r))\subset B(f(x),r^{1-\beta(r)}).$

Using (\ref{gggg}), we have
\[
N(B,r^{1-\beta(r)})\leq N(A,r)\text{ and }N(A,r^{1-\beta(r)})\leq N(B,r).
\]
Since $\beta(r)\downarrow0$ when $r\downarrow0,$ using (2) of Proposition
\ref{P:Prop of Homo}, we have
\[
\chi(A,B)=0.
\]

\subsection{Proof of equivalence theorem: sufficiency}

$\ $

Without loss of generality we may assume that $|A|=|B|=1$. Let $\Sigma
=\{0,1\}^{\mathbb{N}}=\{w_{1}w_{2}\cdots:w_{i}\in\{0,1\}$ for all $i\geq1\}$
be a symbolic system equipped with the metric $D(x,y)=2^{-\min\{i\in
\mathbb{N}:w_{i}\neq\omega_{i}\}}$ for distinct points $x=w_{1}w_{2}\cdots,$
$y=\omega_{1}\omega_{2}\cdots$. Given two words $u=u_{1}\cdots u_{m}$ and
$v=v_{1}\cdots v_{n}$ and an infinite word $w=w_{1}w_{2}\cdots$, we write
\[
u\ast v=u_{1}\cdots u_{m}v_{1}\cdots v_{n} \quad\text{and}\quad u\ast
w=u_{1}\cdots u_{m}w_{1}w_{2}\cdots.
\]
The set $\{u\ast w: w\in\Sigma\}$ is called the cylinder determined by $u$,
and the length $m$ of $u$ is called the length of this cylinder.

Choose any $\eta\in(0,\min(1/C,r^{\ast})),$ where $C$ is the uniform
disconnectedness constant of $A$. Then we can get a decomposition of $A$ with
respect to $\{\eta^{k^{2}}\}_{k\geq1}$ (see the discussion after Lemma
\ref{L:Uni Dis}). Corresponding to the decomposition, we will give a
decomposition of $\Sigma$ and construct a quasi-Lipschitz bijection from
$\Sigma$ onto $A$. With the same work to $B,$ we can prove that the resulting
bijection between $A$ and $B$ is quasi-Lipschitz.

Now, for all $k\geq1\ $we have
\begin{equation}
\frac{\underline{\mu}(\eta^{(k-1)^{2}})}{\overline{\mu}(C\eta^{k^{2}})}\leq
m_{i_{1}\cdots i_{k-1}}\leq\frac{\overline{\mu}(C\eta^{(k-1)^{2}}%
)}{\underline{\mu}(\eta^{k^{2}})}. \label{E: number of m}%
\end{equation}
By choosing $\eta$ small enough we may assume that $m_{i_{1}\cdots i_{k-1}}>2$
for each $k\geq1$. Assume that $p_{i_{1}\cdots i_{k-1}}\geq1$ is the integer
satisfying
\begin{equation}
2^{p_{i_{1}\cdots i_{k-1}}}<m_{i_{1}\cdots i_{k-1}}\leq2^{1+p_{i_{1}\cdots
i_{k-1}}}. \label{E:m and p}%
\end{equation}

\textbf{Step 1.} According to the decomposition of $A,$ we give a
decomposition of $\Sigma$.

Set $\Sigma_{\emptyset}=\Sigma$ and $l_{\emptyset}=0.$ Denote all the words in
$\{0,1\}^{p_{\emptyset}}$ by $\pi_{1},\cdots,\pi_{2^{p_{\emptyset}}}.$ Then
the words
\[
\pi_{1}\ast0,\pi_{1}\ast1,\cdots,\pi_{m}\ast0,\pi_{m}\ast1,\pi_{m+1}%
,\cdots,\pi_{2^{p_{\emptyset}}}%
\]
give $m_{\emptyset}$ cylinders whose union is $\Sigma,$ where $m=m_{\emptyset
}-2^{p_{\emptyset}}.$

We denote these cylinders by $\{\Sigma_{i_{1}}\}_{i_{1}\in\Lambda_{1}}$ with
lengths $\{l_{i_{1}}\}_{i_{1}\in\Lambda_{1}}.$ It is clear that

\begin{enumerate}
\item $l_{i_{1}}=p_{\emptyset}$ or $1+p_{\emptyset}$ for all $1\leq i_{1}\leq
m_{\emptyset};$

\item $D(\Sigma_{i_{1}},\Sigma_{j_{1}})\geq2^{-(1+p_{\emptyset})}$ for all
$i_{1}\neq j_{1}.$
\end{enumerate}

For $k\geq2$, as usual, inductively assume that for $k-1$, we have got the
cylinders $\{\Sigma_{i_{1}\cdots i_{k-1}}\}_{i_{1}\cdots i_{k-1}\in
\Lambda_{k-1}}\text{ with lengths }\{l_{i_{1}\cdots i_{k-1}}\}_{i_{1}\cdots
i_{k-1}\in\Lambda_{k-1}}.$ With the same work to every $\Sigma_{i_{1}\cdots
i_{k-1}},$ we can find $m_{i_{1}\cdots i_{k-1}}$ cylinders $\Sigma
_{i_{1}\cdots i_{k-1}i_{k}}$ with lengths $l_{i_{1}\cdots i_{k-1}i_{k}}$ satisfying

\begin{enumerate}
\item $\Sigma_{i_{1}\cdots i_{k-1}}=\bigcup\nolimits_{i_{k}=1}^{m_{i_{1}\cdots
i_{k-1}}}\Sigma_{i_{1}\cdots i_{k-1}i_{k}};$

\item $D(\Sigma_{i_{1}\cdots i_{k-1}i_{k}},\Sigma_{i_{1}\cdots i_{k-1}j_{k}%
})\geq2^{-l_{i_{1}\cdots i_{k-1}}-(1+p_{i_{1}\cdots i_{k-1}})}$ if $i_{k}\neq
j_{k};$

\item $l_{i_{1}\cdots i_{k-1}i_{k}}-l_{i_{1}\cdots i_{k-1}}=p_{i_{1}\cdots
i_{k-1}}$ or $1+p_{i_{1}\cdots i_{k-1}}.$
\end{enumerate}

Then we get the decomposition of $\Sigma.$ There exist cylinders
$\Sigma_{i_{1}\cdots i_{k}}$ of lengths $l_{i_{1}\cdots i_{k}}$ such that for
all $k\geq1,$
\begin{align*}
&  \Sigma=\bigcup\nolimits_{i_{1}\cdots i_{k}\in\Lambda_{k}}\Sigma
_{i_{1}\cdots i_{k}},\\
&  D(\Sigma_{i_{1}\cdots i_{k-1}i_{k}},\Sigma_{i_{1}\cdots i_{k-1}j_{k}}%
)\geq2^{-l_{i_{1}\cdots i_{k-1}}-(1+p_{i_{1}\cdots i_{k-1}})}\text{ if }%
i_{k}\neq j_{k},\\
&  \Sigma_{i_{1}\cdots i_{k}i_{k+1}}\subset\Sigma_{i_{1}\cdots i_{k}},\\
&  l_{i_{1}\cdots i_{k-1}i_{k}}-l_{i_{1}\cdots i_{k-1}}=p_{i_{1}\cdots
i_{k-1}}\text{ or }1+p_{i_{1}\cdots i_{k-1}},\\
&  l_{i_{1}\cdots i_{k}}\geq k.
\end{align*}

\textbf{Step 2.} To verify the existence of the desired bijection between $A$
and $B$, we construct a bijection $f$\ from $\Sigma$ onto $A.$

Let $\Lambda^{\infty}$ be the collection of the infinite words $i_{1}\cdots
i_{k}\cdots$ with $i_{1}\cdots i_{k}\in\Lambda_{k}$ for all $k.$ For any
$i_{1}\cdots i_{k}\cdots\in\Lambda^{\infty},$ let $x_{i_{1}\cdots i_{k}\cdots
}\in A$ and $w_{i_{1}\cdots i_{k}\cdots}\in\Sigma$ be such that
\[
\{x_{i_{1}\cdots i_{k}\cdots}\}={\bigcap\nolimits_{k\geq1}}A_{i_{1}\cdots
i_{k}}\quad\text{and}\quad\{w_{i_{1}\cdots i_{k}\cdots}\}={\bigcap
\nolimits_{k\geq1}}\Sigma_{i_{1}\cdots i_{k}};
\]
note that $|\Sigma_{i_{1}\cdots i_{k}}|\leq2^{-k-1}\rightarrow0$ as
$k\rightarrow\infty$. In a natural way, we obtain a bijection $f$ from
$\Sigma$ onto $A,$ such that for any $i_{1}\cdots i_{k}\cdots\in
\Lambda^{\infty}$,
\[
f(w_{i_{1}\cdots i_{k}\cdots})=x_{i_{1}\cdots i_{k}\cdots}.
\]

In the next step, we will prove that for any distinct points $z_{1},z_{2}%
\in\Sigma,$
\begin{equation}
\frac{\alpha_{A}(\mathrm{d}_{A}(f(z_{1}),f(z_{2})))\log\mathrm{d}_{A}%
(f(z_{1}),f(z_{2}))}{\log D(z_{1},z_{2})}\rightarrow1\text{ uniformly as
}D(z_{1},z_{2})\rightarrow0. \label{E:alpha logd/ logD A}%
\end{equation}
Then we can also construct a bijection $g$ from $\Sigma$ onto $B$ in the same
way, with
\begin{equation}
\frac{\alpha_{B}(\mathrm{d}_{B}(g(z_{1}),g(z_{2})))\log\mathrm{d}_{B}%
(g(z_{1}),g(z_{2}))}{\log D(z_{1},z_{2})}\rightarrow1\text{ uniformly as
}D(z_{1},z_{2})\rightarrow0. \label{E:alpha logd/ logD B}%
\end{equation}
Now, we get a bijection $g\circ f^{-1}$ from $A$ onto $B.$ Using
(\ref{E:alpha logd/ logD A})--(\ref{E:alpha logd/ logD B}), Lemma
\ref{L:r/r=1} and the assumption $\chi(A,B)=0,$ we have
\begin{equation}
\label{E:dA/dB}\frac{\log\mathrm{d}_{B}(g(z_{1}),g(z_{2}))}{\log\mathrm{d}%
_{A}(f(z_{1} ),f(z_{2}))}\rightarrow1\text{ uniformly as }D(z_{1}%
,z_{2})\rightarrow0.
\end{equation}
Hence $A$ and $B$ are quasi-Lipschitz equivalent.

We will give the details of (\ref{E:dA/dB}) as follows:

According to the decompositions of $A$ and $\Sigma$, we know that the
bijection $f$ is continuous and thus is uniformly continuous. That is
\[
\mathrm{d}_{A}(f(z_{1}),f(z_{2}))\rightarrow0\text{ uniformly as }%
D(z_{1},z_{2})\rightarrow0.
\]
For the same reason,
\[
\mathrm{d}_{B}(g(z_{1}),g(z_{2}))\rightarrow0\text{ uniformly as }%
D(z_{1},z_{2})\rightarrow0.
\]

Firstly, by (6.4)--(6.5), we have
\[
\frac{\alpha_{A}(\mathrm{d}_{A}(f(z_{1}),f(z_{2})))\log\mathrm{d}_{A}%
(f(z_{1}),f(z_{2}))}{\alpha_{B}(\mathrm{d}_{B}(g(z_{1}),g(z_{2})))\log
\mathrm{d}_{B}(g(z_{1}),g(z_{2}))}\rightarrow1\text{ uniformly as }%
D(z_{1},z_{2})\rightarrow0.
\]
By the definition of $\chi$, we have $\chi(A,B)=0$ if and only if
$\lim_{r\rightarrow0}\frac{\alpha_{A}(r)}{\alpha_{B}(r)}=1$; then we get that
\[
\frac{\alpha_{A}(\mathrm{d}_{B}(g(z_{1}),g(z_{2})))}{\alpha_{B}(\mathrm{d}%
_{B}(g(z_{1}),g(z_{2})))}\rightarrow1\text{ uniformly as }D(z_{1}%
,z_{2})\rightarrow0.
\]
By the above two formulas, we obtain that
\[
\frac{\alpha_{A}(\mathrm{d}_{A}(f(z_{1}),f(z_{2})))\log\mathrm{d}_{A}%
(f(z_{1}),f(z_{2}))}{\alpha_{A}(\mathrm{d}_{B}(g(z_{1}),g(z_{2})))\log
\mathrm{d}_{B}(g(z_{1}),g(z_{2}))}\rightarrow1\text{ uniformly as }%
D(z_{1},z_{2})\rightarrow0,
\]
that is
\begin{equation}
\label{E:mudA/mudB}\frac{\log\mu(B(x,\mathrm{d}_{A}(f(z_{1}),f(z_{2}))))}%
{\log\mu(B(x,\mathrm{d}_{B}(g(z_{1}),g(z_{2}))))}\rightarrow1\text{ uniformly
as }D(z_{1},z_{2})\rightarrow0,
\end{equation}
for some fixed $x\in A$. Finally, by (\ref{E:mudA/mudB}) and Lemma
\ref{L:r/r=1}, we get (\ref{E:dA/dB}).

\textbf{Step 3.} We need to check (\ref{E:alpha logd/ logD A}).

For any given different points $z_{1},z_{2}\in\Sigma,$ suppose $i_{1}\cdots
i_{k-1}$ $(k\geq1)$ is the longest word such that $A_{i_{1}\cdots i_{k-1}}$
contains both $f(z_{1})$ and $f(z_{2}).$ Then $f(z_{1})\in A_{i_{1}\cdots
i_{k-1}i_{k}},$ $f(z_{2})\in A_{_{i_{1}\cdots i_{k-1}j_{k}}}$ with $i_{k}\neq
j_{k}.$ By (\ref{E: number of m}) and (\ref{E:m and p}),
\begin{align}
D(z_{1},z_{2})  &  \geq2^{-l_{i_{1}\cdots i_{k-1}}-(1+p_{i_{1}\cdots i_{k-1}%
})}\label{E:D >}\\
&  =2^{-(1+p_{i_{1}\cdots i_{k-1}})-(l_{i_{1}\cdots i_{k-1}}-l_{i_{1}\cdots
i_{k-2}})-\cdots-(l_{i_{1}}-l_{\emptyset})-l_{\emptyset}}\nonumber\\
&  \geq\prod_{i=1}^{k}\frac{\underline{\mu}(\eta^{i^{2}})}{2\overline{\mu
}(C\eta^{(i-1)^{2}})}.\nonumber
\end{align}
In the same way,
\begin{equation}
D(z_{1},z_{2})\leq2^{-l_{i_{1}\cdots i_{k-1}}}\leq\prod_{i=1}^{k-1}%
\frac{2\overline{\mu}(C\eta^{i^{2}})}{\underline{\mu}(\eta^{(i-1)^{2}})}.
\label{E:D<}%
\end{equation}
Then%
\begin{equation}
\mathrm{(I)}\leq\log D(z_{1},z_{2})\leq\mathrm{(II)}, \label{E:I logD II}%
\end{equation}
where
\begin{equation}
\mathrm{(I)}=\log\underline{\mu}(\eta^{k^{2}})-k\log2-\log\overline{\mu
}(C)+\sum_{i=1}^{k-1}\left(  \log\underline{\mu}(\eta^{i^{2}})-\log
\overline{\mu}(C\eta^{i^{2}})\right)  \label{E:I}%
\end{equation}
and
\begin{equation}
\mathrm{(II)=}\log\overline{\mu}(C\eta^{k^{2}})+(k-1)\log2+\sum_{i=1}%
^{k-2}\left(  \log\overline{\mu}(C\eta^{i^{2}})-\log\underline{\mu}%
(\eta^{i^{2}})\right)  . \label{E:II}%
\end{equation}
By Definition \ref{D:Main}, here $\log\underline{\mu}(\eta^{k^{2}})\geq
-ak^{2}+b$, $\log\overline{\mu}(C\eta^{k^{2}})\leq-a^{\prime}k^{2}+b^{\prime}$
and
\[
0\leq\log\overline{\mu}(C\eta^{i^{2}})-\log\underline{\mu}(\eta^{i^{2}})\leq c
\quad\text{ if }i\geq1
\]
with some constants $a,a^{\prime}>0$, $b,b^{\prime}\in\mathbb{R}$ and $c>0$.
Therefore, $k\rightarrow\infty,$ uniformly as $D(z_{1},z_{2})\rightarrow0.$

Notice that%
\[
\eta^{k^{2}}\leq\mathrm{d}_{A}(f(z_{1}),f(z_{2}))\leq|A_{i_{1}\cdots i_{k-1}%
}|\leq|B(x_{i_{1}\cdots i_{k-1}},C\eta^{(k-1)^{2}})|\leq2C\eta^{(k-1)^{2}};
\]
then
\begin{equation}
\frac{\log\mathrm{d}_{A}(f(z_{1}),f(z_{2}))}{k^{2}\log\eta}\rightarrow1\text{
uniformly as }D(z_{1},z_{2})\rightarrow0. \label{d rk}
\end{equation}

By (\ref{E:I logD II})--(\ref{E:II}) and the estimates related to (\ref{E:I})--(\ref{E:II}), we have
\begin{equation}\label{E:D/etak2}
  \frac{\log D(z_{1},z_{2})}{\alpha_{A}(\eta^{k^{2}})\cdot k^{2}\log\eta}\rightarrow 1 \text{ uniformly as }D(z_{1},z_{2})\rightarrow0.
\end{equation}
On the other hand, by (\ref{d rk}) and Lemma \ref{L:r/r=1} we have
\begin{equation}\label{E:df/etak2}
  \frac{\alpha_{A}(\mathrm{d}_{A}(f(z_{1}),f(z_{2})))}{\alpha_{A}(\eta^{k^{2}})}\rightarrow 1\text{ uniformly as }D(z_{1},z_{2})\rightarrow0.
\end{equation}
Now (\ref{d rk})--(\ref{E:df/etak2}) imply (\ref{E:alpha logd/ logD A}).

\bigskip


\begin{thebibliography}{99}                                                                                               %


\bibitem {BP}Beardon, A. F. and Pommerenke, Ch.: The Poincar\'{e} metric of
plane domains, J. London Math. Soc. (2), 18(3): 475--483 (1978)

\bibitem {Cawley Mauldin}Cawley, R. and Mauldin, R. D.: Multifractal
decompositions of Moran fractals, Adv. Math., 92(2): 196--236 (1992)

\bibitem {Cutler}Cutler, C. D.: The density theorem and Hausdorff inequality
for packing measure in general metric spaces, Illinois J. Math., 39(4):
676--694 (1995)

\bibitem {David Semmes}David, G. and Semmes, S.: Fractured fractals and broken
dreams: self-similar geometry through metric and measure, Oxford University
Press, New York, 1997

\bibitem {Deng juan}Deng, J., Wen, Z.-Y., Xiong, Y. and Xi, L.-F.: Bilipschitz
embedding of self-similar sets, J. Anal. Math., 114: 63--97 (2011)

\bibitem {Falconer}Falconer, K. J.: Dimensions and measures of quasi
self-similar sets, Proc. Amer. Math. Soc., 106(2): 543--554 (1989)

\bibitem {Falconer Marsh}Falconer, K. J. and Marsh, D. T.: On the Lipschitz
equivalence of Cantor sets, Mathematika, 39(2): 223--233 (1992)

\bibitem {Feng Wen Wu}Feng, D.-J., Wen, Z.-Y. and Wu, J.: Some dimensional
results for homogeneous Moran sets, Sci. China Ser. A., 40(5): 475--482 (1997)

\bibitem {Hutchinson}Hutchinson, J. E.: Fractals and self similarity, Indiana
Univ. Math. J., 30(5): 713--747 (1981)

\bibitem {LM}Llorente, M. and Mattila, P.: Lipschitz equivalence of subsets of
self-conformal sets, Nonlinearity 23(4): 875--882 (2010)

\bibitem {LS}Luukkainen, J. and Saksman, E.: Every complete doubling metric
space carries a doubling measure, Proc. Amer. Math. Soc., 126(2): 531--534 (1998)

\bibitem {MR}Ma$\tilde{\text{n}}$\'{e}, R. and da Rocha, L. F.: Julia sets are
uniformly perfect, Proc. Amer. Math. Soc., 116(1): 251--257 (1992)

\bibitem {Mattila}Mattila, P.: Geometry of sets and measures in Euclidean
spaces, Cambridge University Press, 1995

\bibitem {Mattila Saarenen}Mattila, P. and Saaranen, P.: Ahlfors--David
regular sets and bilipschitz maps, Ann. Acad. Sci. Fenn. Math., 34(2):
487--502 (2009)

\bibitem {Moran}Moran, P. A. P.: Additive functions of intervals and Hausdorff
measure, Proc. Cambridge Philos. Soc., 42: 15--23 (1946)

\bibitem {RRX}Rao, H., Ruan, H.-J. and Xi, L.-F.: Lipschitz equivalence of
self-similar sets, C. R. Math. Acad. Sci. Paris, 342(3): 191--196 (2006)

\bibitem {Tricot}Tricot, C. Jr.: Two definitions of fractional dimension,
Math. Proc. Cambridge Philos. Soc., 91: 57--74 (1982)

\bibitem {Wang Xi N}Wang, Q. and Xi, L.-F.: Quasi-Lipschitz equivalence of
Ahlfors--David regular sets, Nonlinearity, 24(3): 941--950 (2011)

\bibitem {Wang Xi S}Wang, Q. and Xi, L.-F.: Quasi-Lipschitz equivalence of
quasi Ahlfors--David regular sets, Sci. China Math., 54(12): 2573--2582 (2011)

\bibitem {Wen}Wen, Z.-Y.: Moran sets and Moran classes, Chinese Sci. Bull.,
46(22): 1849--1856 (2001)

\bibitem {Wu}Wu, J.-M.: Null sets for doubling and dyadic doubling measures,
Ann. Acad. Sci. Fenn. Ser. A I Math., 18(1): 77--91 (1993)

\bibitem {Xi conformal}Xi, L.-F.: Lipschitz equivalence of self-conformal
sets, J. London Math. Soc. (2), 70(2): 369--382 (2004)

\bibitem {Xi quasi}Xi, L.-F.: Quasi-Lipschitz equivalence of fractals, Israel
J. Math., 160: 1--21 (2007)

\bibitem {Xi Xiong}Xi, L.-F. and Xiong, Y.: Lipschitz equivalence of
graph-directed fractals, Studia Math., 194(2): 197--205 (2009).
\end{thebibliography}
\end{document}